\DeclareRobustCommand{\lyxsout}[1]{\ifx\\#1\else\sout{#1}\fi}
\theoremstyle{plain}
\newtheorem{thm}{\protect\theoremname}[section]
\theoremstyle{definition}
\newtheorem{defn}[thm]{\protect\definitionname}
\theoremstyle{remark}
\newtheorem{rem}[thm]{\protect\remarkname}
\theoremstyle{plain}
\newtheorem{prop}[thm]{\protect\propositionname}
\theoremstyle{plain}
\newtheorem{lem}[thm]{\protect\lemmaname}
\theoremstyle{plain}
\newtheorem{conjecture}[thm]{\protect\conjecturename}
\providecommand{\definitionname}{Definition}
\providecommand{\lemmaname}{Lemma}
\providecommand{\propositionname}{Proposition}
\providecommand{\remarkname}{Remark}
\providecommand{\theoremname}{Theorem}
\providecommand{\conjecturename}{Conjecture}
\providecommand{\definitionname}{Definition}
\providecommand{\lemmaname}{Lemma}
\providecommand{\propositionname}{Proposition}
\providecommand{\remarkname}{Remark}
\providecommand{\theoremname}{Theorem}
\begin{document}
\title{Spectral gap and embedded trees for the Laplacian of the Erd\H{o}s-R\'enyi graph}
\author{Raphael Ducatez\footnote{Universit\'e Claude-Bernard Lyon 1}\, and  
Renaud Rivier\footnote{Universit\'e de Gen\`eve}}
\maketitle
\begin{abstract}
For the Erd\H{o}s-R\'enyi  graph of size $N$ with mean degree $(1+o(1))\frac{\log N}{t+1}\leq d\leq(1-o(1))\frac{\log N}{t}$
where $t\in\mathbb{N}^{*}$, with high probability the smallest non
zero eigenvalue of the Laplacian is equal to $2-2\cos(\pi(2t+1)^{-1})+o(1)$.
This eigenvalue arises from a small subgraph isomorphic to a line of
size $t$ linked to the giant connected component by only one edge.
\end{abstract}

\section{Introduction}

\subsection{Introduction}

The Erd\H{o}s-R\'enyi  graph is the most natural and simple model for random
graph \cite{erd6s1960evolution}: from the complete graph with $N$ vertices,
every edge is kept with probability $p=p_{N}$ independently. The
asymptotic behavior has been extensively studied \cite{bollobas1998random,janson2011random} for various regimes of $p_{N}$ and has shown
a large number of phenomena, for example, the apparition of a component
of macroscopic size at $p_{N}>\frac{1}{N}$ or the connectivity of
the graph for $p_{N}>\frac{\log N}{N}$.

Together with the adjacency matrix, the Laplacian is a canonical way
to encode the structure of the graph. It appears in physics as the Hamiltonian of a quantum particle living on the graph or, in probability, as the generator associated
with the random walk on the graph.
Spectral analysis of the Laplacian of the Erdos-Reyni has recently
enjoyed new results. The bulk of the spectrum has been analyzed in \cite{huang2020spectral} down
to polynomial regimes and to $d\geq\sqrt{\log N}$ in \cite[Chapter 2]{rivier2023thesis}
where local laws down to optimal scale were shown for the Green function.
In particular, in the supercritical regime, $d\geq\log N$ the law
of the spectrum in the bulk can be characterized by the free-convolution
between a Gaussian law (coming from the diagonal matrix of the degree)
and a semi-circle law (coming from the adjacency matrix). The right
edge of the spectrum has been studied in \cite[Chapter 3]{rivier2023thesis}
where the author showed that the largest eigenvalues are matched with
the largest degrees in the graph (see also \cite{campbell2022extreme}).
In a series of papers
\cite{alt2021delocalization,alt2021extremal,alt2021poisson} the authors
had previously achieved similar results for the adjacency matrix For
the left edge of the spectrum of the Laplacian in the supercritical
regime $d\geq\log N$ a correspondence can be established between the smallest
eigenvalues and small-degree vertices (see \cite[Chapter 3]{rivier2023thesis})

In this paper, we study the spectrum of the Laplacian for subcritical
regimes $d\leq\log N$ and compute its smallest non-zero eigenvalue
$\lambda_{2}$ also called the spectral gap. Contrarily
to the supercritical regime, for $d\leq\log N$ the graph becomes
disconnected and small connected components begin to appear. We recall
that the Laplacian is a positive matrix with $\lambda_{1}=0$ whose
multiplicity is equal to the number of connected components. A natural
guess would be that $\lambda_{2}$
is obtained from the Laplacian restricted to the small disconnected
clusters. We prove that the smallest eigenvalues are created
by \emph{small clusters connected to the giant connected component
by only one edge}. Moreover among all these small clusters, the line
of maximal length is the optimal subgraph to minimize the eigenvalue.
We then obtain an explicit formula $\lambda_{2}=2-2\cos\left(\pi(2t_{*}+1)^{-1}\right)+o(1)$
with $t_{*} \in \mathbb{N}^*$ corresponding to the size of the largest but non-giant
disconnected component.

\subsection{Model}

For $G=(V,E)$ a graph with $V$ the set of vertices and $E$ the
set of edges we denote $L(G)\in\mathbb{R}^{|V|\times|V|}$ the Laplacian,
$A(G)\in\mathbb{R}^{|V|\times|V|}$ the adjacency matrix and $D(G)\in\mathbb{R}^{|V|\times|V|}$
the diagonal matrix
of the degree associated with the graph defined by
\[
L(G)=\sum_{e\in E}L(e),\qquad A(G)=\sum_{e\in E}A(e),\qquad L(G)=D(G)-A(G)
\]
where for every $e=(x,y)\in V^{2}$ 
\begin{align*}
L(e) & \coloneqq L((x,y))=(\mathrm{1}_{x}-\mathrm{1}_{y})(\mathrm{1}_{x}-\mathrm{1}_{y})^{*},\quad A(e)\coloneqq A((x,y))=\mathrm{1}_{x}1_{y}^{*}+\mathrm{1}_{y}\mathrm{1}_{x}^{*}.
\end{align*}

We consider the Erd\H{o}s-R\'enyi  graph $\mathbb{G}_{N,d_{N}}$ defined
with set of vertices $V=\{1,\cdots,N\}$ and set of edges $E$ where
each $(x,y)\in V^{2}$, $x\neq y$ is added to the graph at random,
independently and with probability $p_{N}=\frac{d_{N}}{N}$. We call
$d_{N}$ the mean degree of $\mathbb{G}_{N,d_{N}}$.

We say that $\Omega=(\Omega_{N})_{N\in\mathbb{N}}$, events on the
Erd\H{o}s-R\'enyi  graph of size $N$, occures with high probability if $\mathbb{P}(\Omega_{N})\rightarrow1$
as $N\rightarrow\infty$.

\subsection{Main results}

We are interested in the first non-zero eigenvalue of the Laplacian
that we denote 
\[
\lambda_{2}:=\min\bigl(\text{Spec}(L(\mathbb{G}_{N,d_{N}}))\setminus\{0\}\bigr).
\]
Here is our main result.
\begin{thm}
\label{thm:1} Let $t_{*}\in\mathbb{N}^{*}$, $\epsilon>0$ and $d_{N}$
such that 
\begin{align}
(1+\epsilon)\frac{\log N}{t_{*}+1}\leq d_{N}\leq(1-\epsilon)\frac{\log N}{t_{*}}.\label{equ:regimes}
\end{align}
As $N\rightarrow\infty$ with high probability, we have 
\begin{align}
\lambda_{2}=2-2\cos\left(\frac{\pi}{2t_{*}+1}\right)+O\left(\frac{1}{\log N}\right).\label{eq:master-equation}
\end{align}
\end{thm}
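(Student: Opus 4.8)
The plan is to prove matching bounds $\lambda_*-O(1/\log N)\le\lambda_2\le\lambda_*+O(1/\log N)$ with $\lambda_*=2-2\cos(\pi/(2t_*+1))$; the upper bound is soft and the lower bound carries essentially all the work. The guiding deterministic fact is an extremality statement for the \emph{Dirichlet Laplacian} $\tilde L_T=L(T)+\mathrm{1}_\rho\mathrm{1}_\rho^{*}$ of a tree $T$ rooted at $\rho$: among rooted trees with at most $t_*$ vertices, $\min_T\lambda_{\min}(\tilde L_T)=\lambda_*$, attained only by the $t_*$-vertex path rooted at an endpoint, with eigenfunction $v_j\mapsto\cos\bigl((t_*-j+\tfrac12)\tfrac{\pi}{2t_*+1}\bigr)$ extended by $0$ at the ghost parent. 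I would prove this by noting that $\lambda_{\min}(\tilde L_T)$ strictly decreases when a leaf is appended (so the minimiser has exactly $t_*$ vertices) and then using a leaf-relocation argument in the spirit of Fiedler's extremality result among $t_*$-vertex trees. For the upper bound, a first/second moment computation shows that with high probability a pendant path of length $t_*$ (vertices $v_1,\dots,v_{t_*}$ of the prescribed degrees, $v_1$ joined by a single further edge to a vertex $v_0$ of the giant) exists --- its expected count is $\asymp N(d_Ne^{-d_N})^{t_*}\to\infty$ exactly because $t_*<\log N/d_N$, and these structures are spread out enough to concentrate --- and testing the Rayleigh quotient of $L(\mathbb{G}_{N,d_N})$ against the above eigenfunction supported on $\{v_1,\dots,v_{t_*}\}$, shifted by $O(1/N)$ to be orthogonal to $\mathrm{1}$, yields $\lambda_2\le\lambda_*+O(1/N)$.

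For the lower bound I first need structural control of the giant component $\mathcal G$, with high probability: (a) every other component has at most $t_*$ vertices, hence algebraic connectivity $\ge 2-2\cos(\pi/t_*)>\lambda_*$ (the path being extremal), so such components never produce an eigenvalue below $\lambda_*$; (b) every pendant tree attached to the $2$-core of $\mathcal G$, and every thread (maximal degree-$2$ path between branch vertices), has at most $t_*$ vertices --- again from $N(d_Ne^{-d_N})^{n}\to0$ for $n>t_*$, using $d_N\ge(1+\epsilon)\log N/(t_*+1)$; (c) the ``rigid core'' $R$ of $\mathcal G$, obtained by deleting all pendant trees and all threads, is connected, has all degrees of order $\log N$, and is an expander with Laplacian spectral gap $\gamma(R)=\Omega(\log N)$, so that $\mathcal G$ has no long bottleneck. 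Items (a)--(b) are routine expectation estimates; (c) is where known expansion properties of $\mathbb{G}_{N,d_N}$ in the regime $d_N\asymp\log N$ are invoked.

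Now let $\psi$ be a unit $\lambda_2$-eigenvector, supported on $\mathcal G$ by (a), and suppose $\lambda_2<\lambda_*$. From $\langle\psi|_R,L(R)\psi|_R\rangle\le\langle\psi,L(\mathcal G)\psi\rangle=\lambda_2$ and (c), $\psi|_R$ is $\ell^2$-close to a constant, which is $O(N^{-1/2})$ since $\psi\perp\mathrm{1}$ and $R$ carries all but $o(N)$ vertices; feeding this back into the eigenvector equation (core vertices have degree $\asymp\log N$, which forces $\psi$ to decay by a factor $\asymp1/\log N$ per step as one moves from an attachment vertex into $R$) I would bootstrap to the sharper $\|\psi|_R\|^2+\|\psi|_{\text{threads}}\|^2=O(1/(\log N)^2)$. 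Then I run a ground-state substitution: fix $\phi=\tfrac{\pi}{2t_*+1}-c/\log N$ and a positive $h$ on $\mathcal G$ equal to the positive ground state of $\tilde L_T$ on each pendant tree, to a constant of order $1/\log N$ (plus a small Dirichlet bump) on threads and on $R$, with normalisations arranged so that $Lh\ge(2-2\cos\phi)\,h$ at every vertex except at the attachment vertices of pendant trees and threads, where necessarily $(Lh)(x)/h(x)=-O(\log N)$ --- this $-O(\log N)$, coming from the degree $\asymp\log N$ of those vertices, is precisely the source of the $1/\log N$ error. The identity
\[
\langle\psi,L\psi\rangle=\sum_{\{x,y\}\in E}h(x)h(y)\Bigl(\tfrac{\psi(x)}{h(x)}-\tfrac{\psi(y)}{h(y)}\Bigr)^{2}+\sum_{x}\frac{\psi(x)^{2}}{h(x)}\,(Lh)(x)
\]
together with the bound on $\|\psi|_R\|^2$ then gives $\lambda_2=\langle\psi,L\psi\rangle\ge (2-2\cos\phi)-O(1/\log N)\ge\lambda_*-O(1/\log N)$, contradicting $\lambda_2<\lambda_*$ outside this window; combined with the upper bound this proves \eqref{eq:master-equation}.

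The hard part is the lower bound, and within it the honest $O(1/\log N)$ accounting: securing the expander bound $\gamma(R)=\Omega(\log N)$ and ruling out long bottlenecks, and carrying out the bootstrap of the localisation step to $\|\psi|_R\|^2=O(1/(\log N)^2)$ rather than a mere $o(1)$ --- the weak version only yields $\lambda_2=\lambda_*+o(1)$ and not the quantitative rate. Constructing a single global positive supersolution $h$ that is simultaneously compatible with all --- possibly many and clustered --- extremal pendant paths, and controlling the weighted sum $\sum_{x\text{ attach.}}\psi(x)^2$ against $\|\psi|_R\|^2$ when a core vertex may carry several appendages, is the main technical obstacle.
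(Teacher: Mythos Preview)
Your approach via positive supersolutions is genuinely different from the paper's, but your structural decomposition has a real gap at item (c). The ``rigid core'' $R$ --- what remains of the $2$-core after deleting threads, i.e.\ the branch vertices --- does \emph{not} have all degrees of order $\log N$. A vertex $v$ of $\mathbb{G}$-degree exactly $3$ whose three neighbours all have typical degree $\asymp d$ is a branch vertex of the $2$-core and hence lies in $R$; such vertices exist in abundance (expected number $\asymp Ne^{-d}d^{3}\to\infty$), and testing against $\mathbf{1}_v$ gives Rayleigh quotient $3+o(1)$, so $\gamma(R)=\Omega(\log N)$ fails. The point is that your geometric decomposition (pendant trees, threads) misses the low-degree vertices sitting \emph{inside} the $2$-core with three or more $2$-core neighbours; these spoil the core gap and must be placed on the tree side of the split, not the core side.

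The paper resolves this by defining the core via degree: $\mathcal{V}^{c}=\{x:D_x>\tau d\}$, so that the gap $\Omega(d)$ follows from $\|A-\mathbb{E}A\|=O(\sqrt{d})$ plus a rank-one correction (Proposition~\ref{prop:Spec_Vc}). But then the forest $\mathbb{G}|_{\mathcal{V}}$ consists of trees that may be attached to $\mathcal{V}^{c}$ by \emph{several} edges, so your singly-rooted Dirichlet extremality is no longer enough; the paper proves the extremality for $L(T;\widehat{X})$ with an arbitrary anchor multiset $\widehat{X}\subset[[T]]$ (Proposition~\ref{prop:minimality}). And rather than a global supersolution $h$, the paper runs a Schur-complement / block-diagonal perturbation (Proposition~\ref{prop:blockdiagonal}) showing that on $[0,\tfrac{1}{2}\tau d]$ the spectrum of $L(\mathbb{G})$ agrees, up to errors $O((1+\mu)d^{-1})$, with $\{0\}\cup\bigcup_{T\in\mathfrak{F}}\mathrm{Spec}\,L(T;\widehat{\partial T})$. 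This bypasses both the localization bootstrap and the construction of $h$ that you correctly flag as the main obstacle.
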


As we will see in the proof, the value of $\lambda_{2}$ is closely
linked with the existence of lines of length $t_{*}$ that are connected
to the rest of the graph by exactly one edge. The picture below summarizes
the various regimes covered.

\begin{figure}[ht!]
    \begin{center}
        \def\svgwidth{0.9\columnwidth} 
            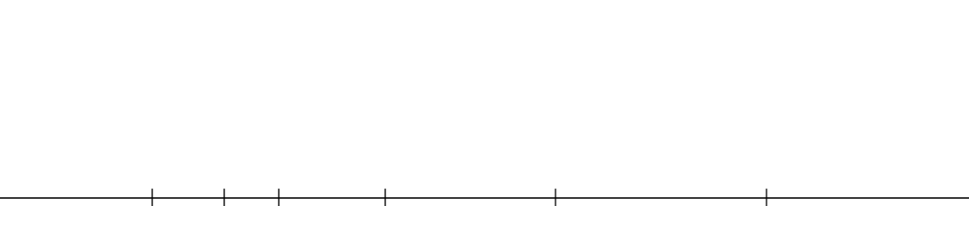
        \caption{Illustration of the regimes covered by Theorem \ref{thm:1}. 
        As the density parameter $d$ decreases, the graph becomes sparser and longer lines appear.
        Some of those lines are connected by exactly one edge to the main connected component and generate the formula \eqref{eq:master-equation} for $\lambda_2$. 
        The regimes corresponding to $d\geq \log N$ (small degree vertices) are not covered in this paper but a detailed analysis can be found in \cite[Chapter 3]{rivier2023thesis}.  }
        \label{fig:regimesThm:tree}
    \end{center}
\end{figure}

\section{Proof of Theorem \ref{thm:1}}

\subsection{Notations}

In the rest of the paper we simply write $\mathbb{G}=\mathbb{G}_{N,p_{N}}$
and $d=d_{N}$. We recall that a multiset is a set where elements can appear multiple times. We use the notation $\widehat{X}$ for
a multiset, $X$ for the associated set and for $x\in X$ we write
$\widehat{X}(x)$ the multiplicity of $x$. For example with $\widehat{X}=\{x,x,y\}$
we have $X=\{x,y\}$. $\widehat{X}(x)=2$, $\widehat{X}(y)=1$. We
also denote $\hat{X}\subset[[Y]]$ for $X\subset Y$.
\begin{defn}
For a graph $G=(V,E)$ and subgraph $T\subset V$ we define the anchor
$\widehat{\partial T}$ as the multiset 
\[
\widehat{\partial T}=\{x\in T\colon(x,y)\in E,y\notin T\}
\]
\end{defn}

It corresponds to the \textit{outgoing degree} of the vertices of $T$.

\begin{defn}
\label{def:weighted_Laplacian}For a graph $G=(V,E)$ and a multiset $\hat{X}\subset[[V]]$
we define the weighted Laplacian 
\[
L\bigl(G;\widehat{X}\bigr)=L(G)+\sum_{x\in\hat{X}}\mathrm{1}_{x}\mathrm{1}_{x}^{*}=L(G)+\sum_{x\in X}\hat{X}(x)\mathrm{1}_{x}\mathrm{1}_{x}^{*}.
\]
\end{defn}

\begin{rem}
Given $T\subset V$, the weigthed Laplacian corresponds to the usual
matrix restriction, $L(T;\widehat{\partial T})=L(G)|_{T}$.
\end{rem}

\subsection{Proof of Theorem \ref{thm:1}}

Through all the paper $t_{*}\in\mathbb{N}^{*}$, $\epsilon\in(0,\frac{1}{4})$
and as $-\tau\log\tau+\tau\rightarrow0$ for $\tau\rightarrow0$ we
can choose $\tau\in(0,1)$ small enough such that 
\begin{equation}
-\tau\log\tau+\tau\leq\frac{\epsilon}{2}.\label{eq:size-of-tau}
\end{equation}
Define 
\[
{\cal V}=\{x\in[N],D_{x}\leq\tau d\}.
\]
We call a line of size $t\in\mathbb{N}$ the graph 
\[
\mathbb{L}_{t}=\bigl(\{1,\cdots,t\},E\bigr)\text{ with }E=\{(i,i+1)\colon1\leq i<t\}
\]

\begin{prop}
\label{prop:XiProbSet} With high probability
\begin{enumerate}
\item $\mathbb{G}|_{{\cal V}}$ is a forest.
\item For any tree $T\subset\mathbb{G}|_{{\cal V}}$ we have $|T|\leq t_{*}$.\label{enu:largest_tree}
\item There exists a tree $T\subset\mathbb{G}|_{{\cal V}}$ that is isomorph
to a line of size $t_{*}$ and such that it is connected to $({\cal V})^{c}$
by exactly one edge attached at one of its two extremal points. \label{enu:LineExistence}
\end{enumerate}
\end{prop}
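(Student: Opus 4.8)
The plan is to prove the three items by first- and second-moment arguments, the common input being a lower-tail estimate for the degrees. Since $D_{x}$ is $\mathrm{Bin}(N-1,d/N)$ with mean $(1+o(1))d$, Chernoff's bound gives $\mathbb{P}(D_{x}\le\tau d)\le e^{-(1-\tau+\tau\log\tau)d(1-o(1))}$, and by \eqref{eq:size-of-tau} the rate satisfies $1-\tau+\tau\log\tau=1-(\tau-\tau\log\tau)\ge 1-\epsilon/2$; the same bound holds for the number of neighbours of a fixed vertex inside a fixed set of cardinality $N-o(N)$. Taking expectations, $\mathbb{E}\,|{\cal V}|\le N\,e^{-(1-\epsilon/2-o(1))d}$, and since $d\ge(1+\epsilon)\frac{\log N}{t_{*}+1}$ (see \eqref{equ:regimes}) this is $\le N^{1-c}$ for some $c=c(\epsilon,t_{*})>0$; hence, by Markov, $|{\cal V}|\le N^{1-c}$ with high probability and we restrict to that event, so that in items 1 and 2 only subgraphs with at most $N^{1-c}$ vertices need to be considered.

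For items 1 and 2 it suffices to bound the expected number of subgraphs of $\mathbb{G}|_{{\cal V}}$ that are a cycle of some length $k\ge 3$, or a tree on exactly $t_{*}+1$ vertices (a longer tree contains such a subtree). Fix a set $S$ carrying a prescribed such structure, $|S|\le N^{1-c}$, and condition on its edges; the numbers of edges from the vertices of $S$ to $V\setminus S$ are then independent binomials with mean $(1+o(1))d$, each of which must be $\le\tau d$ for $S\subset{\cal V}$, which costs at most $e^{-(1-\epsilon/2-o(1))\,|S|\,d}$. Since there are $\binom{N}{k}\frac{(k-1)!}{2}p^{k}\le\frac{d^{k}}{2k}$ labelled $k$-cycles and $O(N^{t_{*}+1}p^{t_{*}})=O(N d^{t_{*}})$ labelled $(t_{*}+1)$-trees in $\mathbb{G}$, the expected cycle count is at most $\sum_{k\ge3}\frac{1}{2k}e^{-k((1-\epsilon/2-o(1))d-\log d)}$, a geometrically converging series that tends to $0$; and the expected tree count is at most $O(1)\cdot N d^{t_{*}}e^{-(1-\epsilon/2-o(1))(t_{*}+1)d}$, whose logarithm, using $\log N\le\frac{(t_{*}+1)d}{1+\epsilon}$, is $\le(t_{*}+1)d\big(\tfrac{1}{1+\epsilon}-(1-\epsilon/2)+o(1)\big)+O(\log d)\to-\infty$ because $(1-\epsilon/2)(1+\epsilon)>1$ for $\epsilon\in(0,\tfrac14)$. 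Markov's inequality then yields items 1 and 2.

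For item 3, let $X$ be the number of ordered tuples $(v_{1},\dots,v_{t_{*}})$ of distinct vertices with $(v_{i},v_{i+1})\in E$ for $1\le i<t_{*}$, with no other edge inside $\{v_{1},\dots,v_{t_{*}}\}$, and with exactly one edge joining $\{v_{1},\dots,v_{t_{*}}\}$ to $V\setminus\{v_{1},\dots,v_{t_{*}}\}$, incident to $v_{1}$. Such a tuple has all its degrees $\le 2\le\tau d$, so it lies in ${\cal V}$ and spans an induced $\mathbb{L}_{t_{*}}$; and on the event of items 1 and 2 its unique outside neighbour cannot lie in ${\cal V}$ (otherwise $\{v_{1},\dots,v_{t_{*}}\}$ together with it would be a tree of size $t_{*}+1$ inside $\mathbb{G}|_{{\cal V}}$), so $\{X\ge 1\}$ forces item 3. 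Because the constraints at distinct $v_{i}$ involve disjoint edge sets, $\mathbb{E}[X]=(1+o(1))N^{t_{*}}p^{t_{*}-1}\,d\,e^{-t_{*}d}=(1+o(1))N d^{t_{*}}e^{-t_{*}d}$, hence $\log\mathbb{E}[X]=\log N+t_{*}\log d-t_{*}d+o(1)\ge(\epsilon-o(1))\log N\to\infty$ by $d\le(1-\epsilon)\frac{\log N}{t_{*}}$. For $\mathbb{E}[X^{2}]$ I would split pairs of tuples according to the number $j$ of shared vertices: the contribution of $j=0$ is $(1+o(1))\mathbb{E}[X]^{2}$, since two disjoint tuples share only $O(t_{*}^{2})$ potential edges, each producing a harmless factor $(1-p)^{-1}=1+o(1)$; for $1\le j\le t_{*}$, the union of the two paths has $2t_{*}-j$ vertices and at least $2t_{*}-j-1$ edges, so that contribution is $\lesssim N^{2t_{*}-j}p^{2t_{*}-j-1}d^{O(1)}e^{-(2t_{*}-j)d}=N d^{O(1)}d^{2t_{*}-j}e^{-(2t_{*}-j)d}$, and dividing by $\mathbb{E}[X]^{2}$ gives $\lesssim d^{O(1)}e^{jd}/N\le d^{O(1)}N^{-\epsilon}\to 0$, where we used $e^{d}\le N^{(1-\epsilon)/t_{*}}$. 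Hence $\mathrm{Var}(X)=o(\mathbb{E}[X]^{2})$, and Chebyshev's inequality gives $X\ge 1$ with high probability, which proves item 3.

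The step I expect to be the main obstacle is the second-moment estimate for item 3. The event counted by $X$ is not monotone (``exactly one'' outgoing edge), so a Janson-type inequality is not directly available and the overlap sum must be controlled term by term; moreover it is exactly in the range $1\le j\le t_{*}$, where the correlations between two nearby potential lines have to be cancelled, that the upper constraint $d\le(1-\epsilon)\log N/t_{*}$ is used sharply — mirroring the way the lower constraint $d\ge(1+\epsilon)\log N/(t_{*}+1)$ drives items 1 and 2. A more elementary but unavoidable nuisance is keeping track of how $\tau$, $\epsilon$ and $t_{*}$ interlock, which is where \eqref{eq:size-of-tau} and the inequality $(1-\epsilon/2)(1+\epsilon)>1$ enter.
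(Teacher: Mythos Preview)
Your proposal is correct and follows the same overall strategy as the paper: first-moment bounds for items~1 and~2, second moment for item~3. Two technical differences are worth flagging. For items~1 and~2 the paper takes a longer route, deducing them from a stronger local statement (their Proposition~3.2: for every $x\in{\cal V}$ the ball $B_r(x)$ is a tree and contains at most $t_*$ vertices of ${\cal V}$, for $r\asymp\log N/\log d$), proved by an exploration/filtration argument together with citations to earlier work; your direct union-bound count of cycles and $(t_*+1)$-trees inside $\mathbb{G}|_{{\cal V}}$ is more elementary and self-contained for this proposition, though the paper needs those stronger ball statements later in Section~4. For the overlap term in the second moment the paper uses a shortcut you might prefer: when $Y_1\neq Y_2$ intersect, every vertex of $Y_1\cup Y_2$ has degree $O(1)$, so $Y_1\cup Y_2$ is a connected subset of ${\cal V}$ of size at least $t_*+1$, and the whole overlap sum is bounded by the expected number of such components, which is $o(1)$ by the item-2 computation---this recycles the \emph{lower} constraint on $d$ and avoids your term-by-term analysis in $j$, where you instead invoke the \emph{upper} constraint via $e^{t_* d}\le N^{1-\epsilon}$.
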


Denote $\mathfrak{F}$ the set of tree in the forest $\mathbb{G}|_{{\cal V}}$.
\begin{prop}
\label{prop:blockdiagonal} With high probability 
\begin{align*}
\sum_{\lambda\in\text{Spec}(L(\mathbb{G}))}\delta_{\lambda},\qquad\text{and}\qquad\delta_{0}+\sum_{T\in\mathfrak{F}}\sum_{\mu\in\text{Spec}(L(T;\widehat{\partial T}))}\delta_{\mu+\epsilon_{\mu}}
\end{align*}
agree on the interval $[0,\frac{1}{2}\tau d]$ and such that $\epsilon_{\mu}=O\bigl((1+\mu)d^{-1}\bigr)$
for all $\mu$.
\end{prop}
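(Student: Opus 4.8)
\noindent\emph{Strategy.} The plan is to strip off the high-degree vertices $\mathcal V^c$ by a Schur complement and show that the ``self-energy'' they create is a relative $O(1/d)$ perturbation of $\bigoplus_T L(T;\widehat{\partial T})$. First I would dispose of the small components of $\mathbb G$: by Proposition~\ref{prop:XiProbSet}(\ref{enu:largest_tree}) each has at most $t_*$ vertices, hence only vertices of degree $\le t_*-1\le\tau d$ for $N$ large, so it lies in $\mathcal V$ and, by part~(1), is a tree $T\in\mathfrak F$ with $\widehat{\partial T}=\emptyset$; each such $T$ is an exact diagonal block of $L(\mathbb G)$ with $L(T;\widehat{\partial T})=L(T)$, contributing identical atoms ($\epsilon_\mu=0$) to both measures. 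It remains to treat the giant component $\mathbb G_{\mathrm{big}}$. As a vertex of degree $>\tau d$ lies in $\mathbb G_{\mathrm{big}}$ we have $\mathcal V^c\subset V(\mathbb G_{\mathrm{big}})$, and with $\mathcal V_{\mathrm{big}}=\mathcal V\cap V(\mathbb G_{\mathrm{big}})$, Proposition~\ref{prop:XiProbSet}(1) and the Remark give
\[
L(\mathbb G_{\mathrm{big}})=\begin{pmatrix}A & B\\ B^* & C\end{pmatrix},\qquad A=\bigoplus_{T\in\mathfrak F,\,T\subset\mathbb G_{\mathrm{big}}}\!\! L(T;\widehat{\partial T}),\qquad C=L(\mathbb G)|_{\mathcal V^c}=L(\mathbb G|_{\mathcal V^c})+W,
\]
with $B_{xy}=-\mathbf 1[(x,y)\in E]$ for $x\in\mathcal V_{\mathrm{big}},\,y\in\mathcal V^c$, and $W$ diagonal, $W_{yy}=\#\{x\in\mathcal V:(x,y)\in E\}$.

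\noindent\emph{Probabilistic inputs.} Two facts about the core are needed, with high probability. The easy one: $|\mathcal V|\le N^{1-\delta}$ for some $\delta(\epsilon)>0$, by a Chernoff bound for $\mathrm{Bin}(N,d/N)$ whose rate function at $\tau$ is $1-\tau(1-\log\tau)\ge 1-\epsilon/2$ by \eqref{eq:size-of-tau}, together with $d\ge(1+\epsilon)\log N/(t_*+1)$; thus $|\mathcal V|d/N$ is polynomially small and a union bound gives $w_0:=\max_{y}W_{yy}=O(1)$. The hard one---and what I expect to be the main obstacle---is that the high-degree core expands: $\mathbb G|_{\mathcal V^c}$ is connected and $\lambda_2(L(\mathbb G|_{\mathcal V^c}))\ge\tfrac34\tau d$. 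This is a large-deviation estimate in the spirit of Proposition~\ref{prop:XiProbSet}, made tractable because every vertex of $\mathbb G|_{\mathcal V^c}$ has degree $\gtrsim\tau d\gtrsim\log N$, so one can union-bound over vertex subsets $S$ via concentration of $|E(S,S^c)|$ around $d\,|S|(1-|S|/N)$. Granting it, $C\succeq L(\mathbb G|_{\mathcal V^c})$ is positive definite with a unique eigenvalue $c_1$ below $\tfrac34\tau d$; the Rayleigh quotient of $\mathbf 1_{\mathcal V^c}$ bounds it by $(\sum_y W_{yy})/|\mathcal V^c|\le|\mathcal V|\tau d/|\mathcal V^c|\le N^{-\delta'}$, and the unit eigenvector $e_1$ is $O(N^{-\delta'}/d)$-close to $\mathbf 1_{\mathcal V^c}/|\mathcal V^c|^{1/2}$.

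\noindent\emph{Schur complement.} Splitting off $e_1$, write $C=c_1e_1e_1^*+C_\perp$ with $C_\perp\succeq\tfrac34\tau d$ on $e_1^\perp$, and for $\mu\in[0,\tfrac12\tau d]$ set $\Sigma(\mu)=B_\perp(C_\perp-\mu)^{-1}B_\perp^*$, where $B_\perp$ is $B$ followed by the projection onto $e_1^\perp$. Since $C_\perp-\mu\succeq\tfrac14\tau d$,
\[
\langle\phi,\Sigma(\mu)\phi\rangle\le\tfrac{4}{\tau d}\|B^*\phi\|^2,\qquad
\|B^*\phi\|^2=\sum_{y\in\mathcal V^c}\Big|\sum_{x\sim y}\phi(x)\Big|^2\le w_0\langle\phi,\Lambda\phi\rangle,
\]
where $\Lambda$ is diagonal on $\ell^2(\mathcal V_{\mathrm{big}})$ with $\Lambda_{xx}=\#\{y\in\mathcal V^c:y\sim x\}=\widehat{\partial T}(x)$ for the tree $T\ni x$ (the second inequality is Cauchy--Schwarz, each inner sum having $\le w_0$ terms); and $\Lambda\preceq A$ because $A-\Lambda=\bigoplus_T L(T)\succeq0$. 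Hence $0\preceq\Sigma(\mu)\preceq\tfrac Kd A$ on $[0,\tfrac12\tau d]$, $K=4w_0/\tau$. Schur-complementing out only $C_\perp$ (invertible there since $\mathrm{Spec}\,C_\perp\ge\tfrac14\tau d>\mu$) identifies $\mathrm{Spec}\,L(\mathbb G_{\mathrm{big}})\cap[0,\tfrac12\tau d]$, with multiplicity, with the set of $\mu$ in that range for which $0$ is an eigenvalue of $\big(\begin{smallmatrix}\widetilde A(\mu)&B_1\\ B_1^*&c_1\end{smallmatrix}\big)-\mu$, where $\widetilde A(\mu)=A-\Sigma(\mu)$ and $B_1=Be_1$, the extra coordinate absorbing the near-constant mode; here $\|B_1\|$ is polynomially small, since $e_1\approx\mathbf 1_{\mathcal V^c}/|\mathcal V^c|^{1/2}$ up to $O(N^{-\delta'}/d)$ and $B\mathbf 1_{\mathcal V^c}=-\Lambda\mathbf 1$ has norm $\le\tau d\,|E(\mathcal V,\mathcal V^c)|^{1/2}\le\tau d(|\mathcal V|\tau d)^{1/2}$.

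\noindent\emph{Matching.} Let $\nu_1(\mu)\le\nu_2(\mu)\le\cdots$ be the eigenvalues of the leading matrix and $\mu_1\le\mu_2\le\cdots$ those of $A$; note $\mu_1=\Theta(1)$, realised by Proposition~\ref{prop:XiProbSet}(\ref{enu:LineExistence}) by the line of length $t_*$ anchored at an endpoint. The $\mu$-dependence enters only through $-B_\perp(C_\perp-\mu)^{-1}B_\perp^*$, non-increasing in the PSD order, so every $\nu_k$ is non-increasing; and since $\det L(\mathbb G_{\mathrm{big}})=0$ with $0$ simple while $\det C_\perp\ne0$, one has $\nu_1(0)=0$, hence $\nu_1(\mu)\le0$ on $[0,\tfrac12\tau d]$ and $\mu=\nu_1(\mu)$ only at $\mu=0$---this is the atom $\delta_0$. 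For $k\ge2$: from $0\preceq\Sigma(\mu)\preceq\tfrac Kd A$ one gets $\mu_{k-1}(\widetilde A(\mu))\in[(1-K/d)\mu_{k-1},\mu_{k-1}]$, and since the bordering by $B_1$ shifts the sorted spectrum by at most $\|B_1\|\ll1/d$ with $c_1\le N^{-\delta'}$ playing the role of the smallest eigenvalue (matched to $\nu_1$), we obtain $\nu_k(\mu)=\mu_{k-1}+O((1+\mu_{k-1})/d)$ uniformly on $[0,\tfrac12\tau d]$. Each such branch being non-increasing, $\mu\mapsto\mu-\nu_k(\mu)$ has at most one zero, and collecting these zeros recovers, with multiplicities, precisely $\mathrm{Spec}\,L(\mathbb G_{\mathrm{big}})\cap(0,\tfrac12\tau d]$, each point within $O((1+\mu)/d)$ of an eigenvalue of $A$ (near the right end $\mu_{k-1}\asymp d$, where the bound is just $O(1)$). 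Together with the exact contribution of the small components and the atom $\delta_0$ from the constant mode of $\mathbb G_{\mathrm{big}}$, the two measures agree on $[0,\tfrac12\tau d]$ with $\epsilon_\mu=O((1+\mu)/d)$.
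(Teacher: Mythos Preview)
Your approach is genuinely different from the paper's and in several respects more direct. The paper first enlarges each tree $T\in\mathfrak F$ to the $2$-neighbourhood $U\in\mathfrak U$ and compares $L(\mathbb G)|_U$ with $L(T;\widehat{\partial T})$ via a continuous interpolation $H(s)$ (Propositions~\ref{prop:Tree_to_Ball}--\ref{prop:Exponential_decay}), then glues these local pieces to the bulk using a hand-built basis containing the approximate constant vector $\mathbf q$ (Propositions~\ref{prop:trivialevect-1}--\ref{prop:Spec_Vc} and the final block computation). You keep the natural $\mathcal V/\mathcal V^c$ split and Schur-complement out the core in one shot; the relative operator bound $0\preceq\Sigma(\mu)\preceq\tfrac{K}{d}A$, obtained from $\|B^*\phi\|^2\le w_0\langle\phi,\Lambda\phi\rangle$ together with $\Lambda\preceq A$, is an elegant device that delivers the error $O((1+\mu)/d)$ in a single stroke, and your isolation of the near-constant mode $e_1$ plays the same role as the paper's $\mathbf q$. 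Modulo the issue below, the monotone-crossing matching of the $\nu_k$ with the $\mu_{k-1}$ is sound (some care is needed near the endpoint $\tfrac12\tau d$, where $|\epsilon_\mu|$ is only $O(1)$, but this is routine).

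There is, however, a real gap at your ``hard probabilistic input'' $\lambda_2\bigl(L(\mathbb G|_{\mathcal V^c})\bigr)\ge\tfrac34\tau d$. The union bound over subsets you sketch does not close in the present regime $d\asymp\frac{1}{t_*+1}\log N$. For sets $S$ of moderate size $k$ (say $k\asymp d$, which is where your deterministic degree argument stops helping), the Chernoff lower-tail rate for $|E(S,S^c)|$ is at best $e^{-ckd}$ with a constant $c<1$, while the entropy is $\log\binom{N}{k}\approx k\log(N/k)\approx k\log N\approx (t_*+1)kd$; the entropy therefore dominates for every $t_*\ge1$, and even after Cheeger one would lose an additional factor. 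The paper bypasses this entirely: its Proposition~\ref{prop:Spec_Vc} gets the spectral gap of $L(\mathbb G)|_{\mathcal V^c}$ from the imported norm bound $\|A(\mathbb G)-\mathbb E A(\mathbb G)\|\le\mathcal C\sqrt d$ (item~\ref{enu:norm_adgency_matrix} of Proposition~\ref{prop:PreviousProba}), which together with $D|_{\mathcal V^c}\succeq\tau d$ and the fact that $\mathbb E A|_{\mathcal V^c}$ is rank one gives $\lambda_2\ge\tau d-O(\sqrt d)$ by interlacing. If you swap your isoperimetric sketch for this input, the rest of your Schur-complement argument goes through.
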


\begin{prop}
\label{prop:minimality} For any tree $T$ of size $|T|\leq t_{*}$,
$\widehat{X}\subset[[T]]$ and $\mu\in\text{Spec}(L(T;\widehat{X}))$
with $\mu>0$ we have
\begin{equation}
\mu\geq2-2\cos\left(\frac{\pi}{2t_{*}+1}\right).\label{eq:LineMin}
\end{equation}
 Moreover equality holds if and only if $T$ is a line of size $t_{*}$
and $\widehat{X}$ one of its extremal point and $\mu$ its smallest
eigenvalue.
\end{prop}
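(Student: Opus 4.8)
The plan is to reduce the statement to a clean optimization problem over eigenvectors and then solve it by a combinatorial/analytic argument on trees. First, observe that $L(T;\widehat X) = L(T) + \sum_{x}\widehat X(x)\mathrm 1_x\mathrm 1_x^*$ is symmetric positive semidefinite, so by the variational principle, the smallest positive eigenvalue $\mu$ satisfies $\mu \ge \lambda_1\bigl(L(T;\widehat X)\bigr)$, with the latter being strictly positive as soon as $\widehat X \ne \emptyset$ (since then $L(T;\widehat X)$ is invertible on any connected component touched by $\widehat X$). Because adding weights $\mathrm 1_x\mathrm 1_x^*$ only increases the quadratic form, the minimum over all nonempty $\widehat X\subset[[T]]$ is attained by a single anchor of weight one at one vertex $x_0$, i.e. $\widehat X = \{x_0\}$; and since eigenvalues of a block-diagonal matrix are the union of the blocks' eigenvalues, we may assume $T$ is connected, i.e. a tree. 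So it suffices to show: for every tree $T$ with $|T|\le t_*$ and every vertex $x_0\in T$,
\[
\lambda_1\bigl(L(T)+\mathrm 1_{x_0}\mathrm 1_{x_0}^*\bigr)\ \ge\ 2-2\cos\!\left(\frac{\pi}{2t_*+1}\right),
\]
with equality iff $T=\mathbb L_{t_*}$ and $x_0$ is an endpoint.

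The key step is a \emph{doubling/reflection} trick that turns the Dirichlet-type problem with one weighted vertex into an honest Laplacian eigenvalue problem on a larger tree. Given $(T,x_0)$, form the tree $\widetilde T$ by gluing two disjoint copies $T^+,T^-$ of $T$ along a new edge joining the two copies of $x_0$ — equivalently, identify... more cleanly: let $\widetilde T$ be two copies of $T$ with an extra edge between $x_0^+$ and $x_0^-$. The reflection-antisymmetric eigenvectors $v$ of $L(\widetilde T)$ (those with $v(y^-) = -v(y^+)$) are exactly the eigenvectors of $L(T)+\mathrm 1_{x_0}\mathrm 1_{x_0}^*$: the extra edge contributes $(v(x_0^+)-v(x_0^-))^2 = (2v(x_0^+))^2$ to the quadratic form on $\widetilde T$, i.e. $4v(x_0^+)^2$ split as $2v(x_0^+)^2$ per copy, so on one copy we see $L(T)$ plus $2\,\mathrm 1_{x_0}\mathrm 1_{x_0}^*$; to get exactly weight one, instead glue $x_0^+$ to $x_0^-$ by \emph{identifying} them into a single vertex $*$, obtaining a tree $\widehat T$ on $2|T|-1$ vertices, and note that the antisymmetric vectors vanish at $*$, so they are the Dirichlet eigenvectors of $\widehat T$ with a zero boundary condition at $*$ — which in turn, after removing $*$, are precisely the eigenvectors of $L(T)+\widehat{\partial}$-type weight, here $L(T)+\deg_{\widehat T}(\cdot \text{ at } x_0)\cdots$. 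I will arrange the gluing so the induced weight at $x_0$ is exactly one; the upshot is that $\lambda_1(L(T)+\mathrm 1_{x_0}\mathrm 1_{x_0}^*)$ equals the smallest eigenvalue of $L(\widehat T)$ restricted to the span of vectors vanishing at the gluing vertex, hence is $\ge \lambda_2(L(\widehat T))$, the spectral gap of a tree on $\le 2t_*-1$ vertices.

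Then I would invoke (or reprove) the known fact that among all trees on $n$ vertices, the path $\mathbb P_n$ minimizes the algebraic connectivity $\lambda_2$, with value $2-2\cos(\pi/n)$; applied with $n = 2t_*-1$ this gives the bound $2-2\cos(\pi/(2t_*-1))$ — but note this is \emph{larger} than the claimed $2-2\cos(\pi/(2t_*+1))$, so the doubling must be set up to land on a path of length $2t_*+1$, not $2t_*-1$. The correct bookkeeping: a line $\mathbb L_{t_*}$ anchored at an endpoint, doubled with a connecting edge, gives a path with $2t_*$ edges hence $2t_*+1$ vertices, and the antisymmetric ground state of $\mathbb P_{2t_*+1}$ has eigenvalue $2-2\cos(\pi/(2t_*+1))$ (the first antisymmetric mode of the discrete path), matching \eqref{eq:LineMin}. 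So the clean route is: (i) reduce to $\widehat X=\{x_0\}$ and $T$ a tree by monotonicity and block-diagonality; (ii) identify $\lambda_1(L(T)+\mathrm 1_{x_0}\mathrm 1_{x_0}^*)$ with the smallest \emph{odd} eigenvalue of the doubled tree $\widehat T$ (path $\mathbb P_{2|T|+1}$ when $T=\mathbb L_{t_*}$, $x_0$ an endpoint); (iii) show the doubled tree, which has $\le 2t_*+1$ vertices and is a tree, has its relevant eigenvalue $\ge 2-2\cos(\pi/(2t_*+1))$ by the path-minimizes-connectivity theorem, being careful that the odd/Dirichlet constraint does not lower it below the path value; (iv) trace through the equality case: equality in the path-minimality forces $\widehat T = \mathbb P_{2t_*+1}$ and $*$ in the middle, which forces $T = \mathbb L_{t_*}$ with $x_0$ an endpoint, and then a direct eigenvector computation ($v_j = \sin(j\pi/(2t_*+1))$) confirms $\mu = 2-2\cos(\pi/(2t_*+1))$ is indeed the smallest positive eigenvalue of $L(\mathbb L_{t_*};\{1\})$.

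The main obstacle I anticipate is step (iii)–(iv): making the reflection argument \emph{exact} (getting weight precisely one, not two, at the anchor) and controlling the equality case, since the path-minimizes-algebraic-connectivity theorem is usually stated for the full Laplacian of a tree, whereas here we have a Dirichlet/boundary constraint at one vertex; I would likely avoid the black-box theorem entirely and instead argue directly by induction on $|T|$: pick a leaf $\ell\ne x_0$, use eigenvalue interlacing when deleting $\ell$ together with an explicit comparison showing that pruning any branch that is not ``straight'' can only decrease the number of vertices without increasing $\lambda_1$ beyond the path value, thereby squeezing $T$ down to $\mathbb L_{t_*}$ anchored at an endpoint. The monotonicity in the anchor position — that moving the weight toward an endpoint of a path decreases $\lambda_1$ — is an elementary but slightly fiddly Rayleigh-quotient estimate using the explicit sine eigenvector, and that, plus the leaf-deletion interlacing, should close both the inequality and the equality characterization.
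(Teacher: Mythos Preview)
Your reduction to a single anchor $\widehat X=\{x_0\}$ on a tree of maximal size is the same as the paper's first step. From there the paper does \emph{not} double. It parametrizes a vector $u$ on $T$ by the root value $u(x_0)$ together with the edge increments $q(e)=u(x)-u(y)$, rewrites the Rayleigh quotient as $\bigl(\sum_e q(e)^2+u(x_0)^2\bigr)/\|u\|^2$, and observes that the numerator depends only on $(u(x_0),\boldsymbol q)$ while the denominator depends also on how the edges are arranged into a tree. A short sorting argument (order the edges by $|q(e)|$ and the vertices by distance to the root) shows that for fixed $(u(x_0),\boldsymbol q)$ the denominator is maximized precisely when $T$ is a line rooted at an endpoint, which gives both the inequality and the equality case in one stroke; the value $2-2\cos(\pi/(2t_*+1))$ then comes from a separate transfer-matrix diagonalization of $L(\mathbb L_{t_*};\{1\})$.

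Your reflection route is a genuine alternative, but the gap you flag is real and neither of your two attempted constructions closes it: joining $x_0^+$ to $x_0^-$ by an edge gives weight $2$ on the odd subspace, and identifying them gives the Dirichlet problem at $x_0$, i.e.\ a matrix of size $|T|-1$, not $L(T)+\mathrm 1_{x_0}\mathrm 1_{x_0}^*$. The fix is to insert a \emph{new} vertex $*$ joined to both $x_0^+$ and $x_0^-$ (equivalently, attach a pendant $*$ to $x_0$ and reflect through it). This $\widehat T$ is a tree on $2|T|+1$ vertices; the antisymmetric subspace is $L(\widehat T)$-invariant and on it $L(\widehat T)$ acts exactly as $L(T)+\mathrm 1_{x_0}\mathrm 1_{x_0}^*$, since the edge $x_0^+\!*$ contributes $(u(x_0)-0)^2$. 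Antisymmetric vectors are orthogonal to constants, so their smallest eigenvalue is $\ge\lambda_2(L(\widehat T))$, and Fiedler's theorem (the path minimizes algebraic connectivity among trees on $n$ vertices) together with monotonicity in $n$ gives $\lambda_2(L(\widehat T))\ge 2-2\cos(\pi/(2t_*+1))$. Equality forces $\widehat T=\mathbb L_{2t_*+1}$, hence every vertex has degree $\le 2$, hence $\deg_T(x_0)=1$ and $T=\mathbb L_{t_*}$; and the Fiedler vector of an odd path is antisymmetric about its center, so the bound is attained. This trades the paper's bare-hands rearrangement for a black-box appeal to Fiedler (including its uniqueness case). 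Your inductive fallback via leaf deletion is less promising as written: deleting a leaf $\ell$ with parent $p$ makes $L(T\setminus\{\ell\};\{x_0,p\})$ the principal submatrix of $L(T;\{x_0\})$, and Cauchy interlacing then gives $\lambda_1$ of the smaller matrix $\ge\lambda_1$ of the larger one, which is the wrong direction for an induction toward the line.
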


We prove Proposition \ref{prop:XiProbSet} in Section \ref{sec:Probability-estimate},
Proposition \ref{prop:blockdiagonal} in Section \ref{sec:Rigidity-results}
and Proposition \ref{prop:minimality} in Section \ref{sec:Spectrum-of-Laplacian}.
We then deduce Theorem \ref{thm:1} from these Propositions.
\begin{proof}[Proof of Theorem \ref{thm:1}]
 Because of Proposition \ref{prop:blockdiagonal} it is enough to
study the spectrum of $\text{Spec}(L(T;\widehat{\partial T}))$ with
$T\in\mathfrak{F}$ and $\widehat{\partial T}$ the associated anchor.
Using item \ref{enu:largest_tree} of Proposition \ref{prop:XiProbSet}
and Equation \eqref{eq:LineMin} of Proposition \ref{prop:minimality}
we obtain that with high probability 
\[
\lambda_{2}\geq2-2\cos\left(\frac{\pi}{2t_{*}+1}\right)+O\bigl(d^{-1}\bigr).
\]
On the other hand using item \ref{enu:LineExistence} of Proposition
\ref{prop:XiProbSet} and the equality case of Proposition \ref{prop:minimality}
we also have that with high probability 
\[
\lambda_{2}\leq2-2\cos\left(\frac{\pi}{2t_{*}+1}\right)+O\bigl(d^{-1}\bigr).
\]
\end{proof}

\section{Probability estimate on the graph, proof of Proposition \ref{prop:XiProbSet}}

\label{sec:Probability-estimate}This section is about properties
of the random graph that occur with high probability and that we
use in the following sections. Let us begin this section by setting
a few notations. We equip the graph $\mathbb{G}$ with its natural
graph distance $\text{dist}_{\mathbb{G}}.$ For $x\in[N]$ and $i\in\mathbb{N}$,
we define the sphere of radius $i$ around $x$ as 
\[
S_{i}(x):=\lbrace y\in[N]:\,\text{dist}_{\mathbb{G}}(x,y)=i\rbrace,
\]
as well as the balls of radius $i$ around $x$, $B_{i}(x):=\bigcup_{j\le i}S_{j}(x).$
\begin{lem}
With high probability, there exists a unique giant connected component
$\mathbb{G}_{cc}$ with size $N(1-o(1))$. Moreover $\mathbb{G}_{cc}^{c}\subset{\cal V}.$
\end{lem}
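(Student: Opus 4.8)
The plan is to prove this by a first-moment argument on the sizes of connected components, exploiting that $d=d_N\to\infty$ (since $d\ge(1+\epsilon)\frac{\log N}{t_*+1}$ with $t_*$ fixed) while $\log N/d\le\frac{t_*+1}{1+\epsilon}$ stays bounded. For a set $S\subset[N]$ with $|S|=k$, the events ``$\mathbb{G}[S]$ contains a spanning tree'' and ``no edge joins $S$ to $S^{c}$'' depend on disjoint sets of potential edges, so by Cayley's formula and a union bound over spanning trees,
\[
\mathbb{P}\bigl(S\text{ is a connected component of }\mathbb{G}\bigr)\le k^{k-2}p^{k-1}(1-p)^{k(N-k)}.
\]
Writing $Z_{k}$ for the number of connected components of size $k$, a union bound over $S$ gives $\mathbb{E}[Z_{k}]\le\binom{N}{k}k^{k-2}p^{k-1}(1-p)^{k(N-k)}$.

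First I would rule out ``medium'' components. Fix the constant $K:=4(t_*+1)$. For $K\le k\le N/2$ one has $(1-p)^{k(N-k)}\le e^{-dk/2}$ and, bounding $\binom{N}{k}k^{k-2}p^{k-1}$ crudely by $(ed)^{k}N$, we get $\mathbb{E}[Z_{k}]\le N(ed\,e^{-d/2})^{k}\le Ne^{-dk/4}$ once $d$ is large enough that $1+\log d\le d/4$. Summing, $\sum_{K\le k\le N/2}\mathbb{E}[Z_{k}]\le 2Ne^{-dK/4}\le 2N^{-\epsilon}\to0$, using $dK/4=d(t_*+1)\ge(1+\epsilon)\log N$. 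For the remaining finitely many values $t_*+1\le k<K$, the sharper estimate $(1-p)^{k(N-k)}\le e^{-dk}(1+o(1))$ together with $\binom{N}{k}\le N^{k}/k!$ yields $\mathbb{E}[Z_{k}]\le\frac{k^{k-2}}{k!}Nd^{k-1}e^{-dk}(1+o(1))$, whose logarithm is $\log N-dk+O(\log\log N)\le-\epsilon\log N+O(\log\log N)\to-\infty$. Hence, by Markov's inequality, with high probability $\mathbb{G}$ has no connected component of size in $[t_*+1,N/2]$.

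Next I would control the total mass of the small components. The same bound gives $\mathbb{E}\bigl[\sum_{k\le t_*}kZ_{k}\bigr]\le\sum_{k\le t_*}\frac{k^{k-1}}{k!}Nd^{k-1}e^{-dk}(1+o(1))=o(N)$, since for each fixed $k$ we have $d^{k-1}e^{-dk}=e^{(k-1)\log d-dk}\to0$; by Markov, with high probability fewer than $N/2$ vertices lie in components of size at most $t_*$. On the intersection of these high-probability events every component has size $\le t_*$ or $>N/2$; at most one can have size $>N/2$, and since more than $N/2$ vertices avoid the small components, that component exists and is unique. Call it $\mathbb{G}_{cc}$; its complement is the union of the components of size $\le t_*$, so $|\mathbb{G}_{cc}|=N-o(N)$. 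Finally, any $x\notin\mathbb{G}_{cc}$ lies in a component of size $\le t_*$, hence $D_{x}\le t_*-1\le\tau d$ for $N$ large (as $\tau d\to\infty$), i.e.\ $x\in{\cal V}$; thus $\mathbb{G}_{cc}^{c}\subset{\cal V}$. The only delicate point is the medium-range estimate, where one must take the cutoff $K$ to be a constant --- which is possible precisely because $\log N/d$ is bounded --- and check that the largeness of $d$ absorbs the combinatorial factors $(ed)^{k}$.
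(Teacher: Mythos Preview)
Your argument is correct. The first-moment bound via Cayley's formula is clean, and the key numerical point --- that the cutoff $K=4(t_*+1)$ can be taken constant because $\log N/d$ is bounded in this regime --- is handled properly. One small imprecision: when you conclude $|\mathbb{G}_{cc}|=N-o(N)$, you have only stated that fewer than $N/2$ vertices lie in small components, but since you already established $\mathbb{E}\bigl[\sum_{k\le t_*}kZ_k\bigr]=o(N)$, a second application of Markov with a threshold $\gg\mathbb{E}[\cdot]$ but $\ll N$ gives the $o(N)$ conclusion in probability. This is routine and does not affect the validity.

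The paper's own proof is entirely different in style: it is two sentences long, citing Bollob\'as for the existence and size of the giant component and invoking a theorem there to say that all non-giant components have size $O(1)$, whence any vertex outside $\mathbb{G}_{cc}$ has degree $O(1)\le\tau d$. Your approach trades brevity for self-containment: you re-derive the relevant component-size estimates from scratch, tailored to the specific regime $(1+\epsilon)\frac{\log N}{t_*+1}\le d$, and in doing so you actually get the sharper statement that no non-giant component has size $\ge t_*+1$ (rather than merely $O(1)$). This extra precision is not needed for the lemma as stated, but it anticipates item~\ref{enu:largest_tree} of Proposition~\ref{prop:XiProbSet}.
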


\begin{proof}
The giant connected component is a very classical result, see for
example \cite{bollobas1998random}. Also because \cite[Theorem 6.10]{bollobas1998random}
all the small connected component are of size $O(1)$, than every
very vertices of degree at least $\tau d$ belongs to $\mathbb{G}_{cc}$.
\end{proof}
We now state a few results from \cite{alt2021extremal}.
\begin{prop}
\label{prop:PreviousProba}For some ${\cal C}>0$ and $r<\frac{\epsilon\log N}{4(1+t_{*})^{2}\log d}$
with high probability, we have
\begin{enumerate}
\item $\forall x\in[N],D_{x}\leq{\cal C}d$, \label{enu:largest_degree}
\item $\Vert A(\mathbb{G})-\mathbb{E}(A(\mathbb{G})\Vert\le{\cal C}\sqrt{d}$
\label{enu:norm_adgency_matrix}
\item $|{\cal V}|\leq N^{1-\frac{1}{1+t_{*}}}$, \label{enu:V2_size}
\item For all $x\in{\cal V}$, $\mathbb{G}|_{B_{r}(x)}$ is a tree,\label{enu:proba-item-tree}
\item For all $x\in{\cal V}$, $|B_{r}(x)\cap{\cal V}|\leq t_{*}$.\label{enu:proba_no_large_cluster}
\end{enumerate}
\end{prop}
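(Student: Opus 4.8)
The plan is to read off all five statements from first‑moment estimates over local configurations, with binomial Chernoff bounds and union bounds the only probabilistic inputs. Items~\ref{enu:largest_degree} and~\ref{enu:norm_adgency_matrix} are the classical large‑degree and spectral‑norm controls valid whenever $d\gtrsim\log N$; since here $d\geq(1+\epsilon)\tfrac{\log N}{t_{*}+1}$, for item~\ref{enu:largest_degree} the Chernoff bound $\mathbb{P}(D_{x}\geq\mathcal{C}d)\leq e^{-d(\mathcal{C}\log\mathcal{C}-\mathcal{C}+1)}$ is $\leq N^{-2}$ once $\mathcal{C}=\mathcal{C}(\epsilon,t_{*})$ is large, and a union bound over $[N]$ closes it; item~\ref{enu:norm_adgency_matrix} is the standard nonasymptotic bound for the operator norm of a symmetric matrix with independent, centered, variance‑$p$ entries in the regime $Np\gtrsim\log N$ (see \cite{alt2021extremal} and the references therein), which one may also deduce from item~\ref{enu:largest_degree} via a Bernstein‑type estimate.

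For item~\ref{enu:V2_size} I would bound $\mathbb{E}|{\cal V}|=N\,\mathbb{P}(D_{x}\leq\tau d)$ using the lower‑tail Chernoff estimate $\mathbb{P}(\mathrm{Bin}(N-1,d/N)\leq\tau d)\leq e^{-d(1-\tau+\tau\log\tau)}$ and then apply Markov. The point is that \eqref{eq:size-of-tau} makes the rate function large: $1-\tau+\tau\log\tau\geq1-\tfrac{\epsilon}{2}$, so $\mathbb{E}|{\cal V}|\leq N^{\,1-(1+\epsilon)(1-\epsilon/2)/(1+t_{*})}=N^{\,1-(1+\delta)/(1+t_{*})}$ with $\delta:=\tfrac{\epsilon(1-\epsilon)}{2}>0$, and Markov yields $|{\cal V}|\leq N^{1-1/(1+t_{*})}$ with high probability. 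The same exponent $-(1+\delta)/(1+t_{*})$ for $\mathbb{P}(D_{x}\leq\tau d)$ will be the workhorse of the last two items.

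Items~\ref{enu:proba-item-tree} and~\ref{enu:proba_no_large_cluster} are the core, and the key idea is to use the defining constraint of ${\cal V}$ \emph{inside} the probability, not through item~\ref{enu:V2_size}. For item~\ref{enu:proba-item-tree}: if $\mathbb{G}|_{B_{r}(x)}$ contains a cycle then, concatenating a shortest path from $x$ to that cycle with the cycle itself, $\mathbb{G}$ contains a ``tadpole'' rooted at $x$ — a simple path of length $s$ followed by a cycle of length $\ell$ — with $m:=s+\ell\leq 3r+O(1)$ (since every cycle vertex is within distance $r$ of $x$, $\ell\leq 2r+O(1)$) and at least one edge incident to $x$. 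A tadpole rooted at $x$ is determined by its shape $(s,\ell)$ and an ordered list of $m-1$ further vertices, so the expected number of tadpoles of a fixed shape rooted at $x$ is at most $N^{m-1}(d/N)^{m}=d^{m}/N$, and only $O(r^{2})$ shapes are in play — this is essential, since arbitrary unicyclic subgraphs of size $m$ rooted at $x$ would carry the prohibitive factor $m^{m}$. Conditioning on the forced edge(s) at $x$ still leaves $\mathbb{P}(D_{x}\leq\tau d\mid\cdot)\leq N^{-(1+\delta)/(1+t_{*})+o(1)}$, hence $\mathbb{P}\bigl(x\in{\cal V},\ \mathbb{G}|_{B_{r}(x)}\text{ not a tree}\bigr)\leq d^{3r}N^{-1-(1+\delta)/(1+t_{*})+o(1)}$; the restriction $r<\tfrac{\epsilon\log N}{4(1+t_{*})^{2}\log d}$ forces $d^{3r}<N^{3\epsilon/(4(1+t_{*})^{2})}$, so a union bound over $x\in[N]$ leaves a negative power of $N$. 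Item~\ref{enu:proba_no_large_cluster} runs the same way: if $x\in{\cal V}$ and $|B_{r}(x)\cap{\cal V}|\geq t_{*}+1$, then $\mathbb{G}$ contains a tree $S$ on $m\leq t_{*}r+1$ vertices spanning $x$ and $t_{*}$ further vertices all in ${\cal V}$; the expected number of such configurations is at most $N\,d^{m-1}m^{m}$ times the factor $N^{-(t_{*}+1)(1+\delta)/(1+t_{*})+o(1)}=N^{-(1+\delta)+o(1)}$ coming from the $t_{*}+1$ low‑degree vertices (essentially independent apart from the $O(1)$ edges internal to them), and the bound on $r$ keeps both $d^{m}$ and $m^{m}$ below $N^{\epsilon/(4(1+t_{*}))+o(1)}$, again leaving a negative power of $N$.

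The main obstacle is precisely items~\ref{enu:proba-item-tree}--\ref{enu:proba_no_large_cluster}: a bare union bound over $x\in[N]$ of ``$\mathbb{G}|_{B_{r}(x)}$ is not a tree'' does \emph{not} tend to $0$ — a short cycle through a typical vertex has probability of order $d^{3}/N$ and $N\cdot d^{3}/N\not\to 0$ — so one must genuinely spend the large‑deviation cost of $x$ (and, for item~\ref{enu:proba_no_large_cluster}, of $t_{*}$ further vertices) having anomalously small degree, and then verify that the exponent $\tfrac{1}{4(1+t_{*})^{2}}$ hard‑wired into the admissible range of $r$ is exactly what absorbs the combinatorial factors $d^{O(r)}$ and $r^{O(r)}$ produced by the local configurations; everything else reduces to routine Chernoff bounds.
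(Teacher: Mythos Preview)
Your proposal is correct. Items~\ref{enu:largest_degree}--\ref{enu:proba-item-tree} coincide with the paper's proof: the paper outsources items~\ref{enu:largest_degree}, \ref{enu:norm_adgency_matrix} and the tadpole count behind item~\ref{enu:proba-item-tree} to \cite{alt2021extremal}, whereas you sketch the same arguments directly, and item~\ref{enu:V2_size} is identical. (One small quibble on item~\ref{enu:proba-item-tree}: the sentence ``every cycle vertex is within distance $r$ of $x$, so $\ell\leq 2r+O(1)$'' is not literally the right reason---a long cycle can sit entirely in $B_1(x)$. The clean statement is that any edge $(u,v)$ in $B_r(x)$ outside the BFS tree from $x$ creates a fundamental cycle of length $\leq \mathrm{dist}(x,u)+\mathrm{dist}(x,v)+1\leq 2r+1$, and the tadpole is the $x$-to-LCA path together with that cycle; your bound $m\leq 3r+O(1)$ is of course safe.) For item~\ref{enu:proba_no_large_cluster} you take a genuinely different route: a direct first moment over labeled trees $S$ on $m\leq t_*r+1$ vertices containing $x$ and $t_*$ further designated low-degree vertices, paying the Cayley factor $m^{m}$ and then absorbing it via the hypothesis on $r$. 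The paper instead runs a BFS from $x$ and conditions sphere by sphere on the filtration $\mathcal{F}_i$ generated by $B_i(x)$: on the event $\{|S_i|\leq(\mathcal{C}d)^i\}$ the forward degrees of the vertices in $S_i$ are conditionally independent, so $\mathbb{P}(|S_i\cap\mathcal{V}|\geq m_i\mid\mathcal{F}_i)\leq(\mathcal{C}d)^{im_i}N^{-m_i\nu}$, and one sums over compositions $(m_i)$ of $t_*+1$. Both arguments end up with a combinatorial factor of size $d^{O(rt_*)}$ against the large-deviation gain $N^{-(1+\delta)}$; your version is more elementary (no filtration bookkeeping), while the paper's exploration argument replaces the tree enumeration by a sphere-size bound and would scale more gracefully if one wanted to let $t_*$ or the number of low-degree vertices grow.
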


\begin{proof}[Proof of Proposition \ref{prop:PreviousProba}]
 Item \ref{enu:largest_degree} is stated in \cite[Lemma 3.3]{alt2021extremal}
and Item \ref{enu:norm_adgency_matrix} is stated in \cite[Corollary 2.3]{alt2021extremal}.
We now prove Item \ref{enu:V2_size}. We can apply \cite[Lemma D.2]{alt2021extremal}{]}
with $n=N-1$ and $\mu=d(1-N^{-1})$ and $a=\tau-1+O(N^{-1})$, we
find
\begin{equation}
\mathbb{P}(x\in{\cal V})\leq e^{-d(\tau\log\tau-\tau+1)}(1+O(N^{-1}))\leq N^{-\frac{1+\epsilon}{1+t_{*}}(1-\frac{\epsilon}{2})}(1+O(N^{-1}))=O(N^{-\nu}),\label{eq:bound-on-proba-V}
\end{equation}
where we chose $\tau$ as in \ref{eq:size-of-tau} and $\frac{(1+\epsilon)(1-\epsilon/2)}{1+t_{*}}\geq\nu\geq\frac{1+\epsilon/3}{1+t_{*}},$for
$\epsilon\leq1/3.$ We find that for a $C\geq0$ large enough
\[
\mathbb{E}|{\cal V}|=N\mathbb{P}(x\in{\cal V})\leq CN^{1-\nu}
\]
so that by Markov $\mathbb{P}(|{\cal V}|\geq N^{1-\frac{1}{1+t_{*}}})\leq CN^{\frac{1}{1+t_{*}}-\nu}=O\bigl(N^{-\frac{\epsilon}{3(1+t_{*})}}\bigr)$.

To prove Item \ref{enu:proba-item-tree} we use \cite[Lemma 5.5]{alt2021extremal}
with $k=1$ and have that for any $x\in[N]$, 
\begin{equation*}
    \mathbb{P}(x\in{\cal V},\,B_{r}(x)\text{ contain a cycle })\leq2r^{2}(Cd)^{2r+1}N^{-1-\frac{1}{t_{*}+1}}=o\bigl(N^{-1}\bigr),
\end{equation*}
where we use that $(2r+1)\log d+2\log r\leq4r\log d\leq\frac{4\epsilon\log N}{(1+t_{*})^{2}}\leq\frac{4}{5(t_{*}+1)}\log N$.
Using a union bound, we conclude that 
\begin{equation*}
    \mathbb{P}(\exists x\in[N],\,x\in{\cal V},\,B_{r}(x)\text{ contain a cycle })=o(1).
\end{equation*}

To prove Item \ref{enu:proba_no_large_cluster} we explore the neighbourhood
of $x$ adding the set $S_{i}(x)$ for $i\leq r$ one after the other.
We denote the set of edges 
\[
E_{i+1}=E(\mathbb{G}(B_{i+1}))\setminus E(\mathbb{G}(B_{i}))\cup E(\mathbb{G}(S_{i}))
\]
and the generated sigma algebra ${\cal F}_{i}=\sigma(E_{1},\cdots,E_{i})$.
This corresponds to $\mathbb{G}(B_{i})$ without the set of edges
$\{(x,y)\in E:x,y\in S_{i}\}$. Remark that conditionnally on ${\cal F}_{i}$,
$E_{i+1}$ is given by a set of $|S_{i}|\times(N-|B_{i}|)$ independent
random Bernoulli variables of parameter $\frac{d}{N}$.

For $y\in S_{i}$, we define the random variables $\tilde{D}_{y}:=|S_{1}(y)\cap S_{i+1}|$
which form a family of independent random variable with $\tilde{D}_{y}\leq D_{y}-1$.
We introduce $V_{i}=|S_{i}\cap{\cal V}|$ and, using \eqref{eq:bound-on-proba-V},
we find that, for any $m_{1},\ldots,m_{r}\in\mathbb{N},$
\begin{align*}
1_{|S_{i}|\leq({\cal C}d)^{i}}\mathbb{P}(V_{i} 
& \geq m_{i}|{\cal F}_{i})\leq1_{|S_{i}|\leq({\cal C}d)^{i}}
\sum_{\{y_{1},\cdots,y_{m_{i}}\}\subset S_{i}}\mathbb{P}\Bigl(\cap_{j\leq m_{i}}\{\tilde{D}_{y_{j}}\leq\tau d-1\}|{\cal F}_{i}\Bigr)\\
 & \leq({\cal C}d)^{im_{i}}N^{-m_{i}\nu}.
\end{align*}
We denote ${\cal D}(m)=\{(m_{i})\in\mathbb{N}^{r}:\sum_{i=1}^{r}m_{i}=m\}$
and we have 
\begin{align*}
1_{\forall y\in B_{r}(x),D_{y}\leq{\cal C}d}\mathbb{P}(|B_{r}(x)\cap{\cal V}|\geq m|{\cal F}_{r}) & \leq1_{\forall y\in B_{r}(x),D_{y}\leq{\cal C}d}\sum_{{\cal D}(m)}\mathbb{P}\Bigl(\cap_{i=1}^{r}\{V_{i}\geq m_{i}\}|{\cal F}_{r}\Bigr)\\
 & \leq\sum_{{\cal D}(m)}\prod_{i=1}^{r} 1_{|S_{i}|\leq({\cal C}d)^{i}}\mathbb{P}(V_{i}\geq m_{i}|{\cal F}_{i})\\
 & \leq r^{m}({\cal C}d){}^{rm}N^{-m\nu}
\end{align*}
Therefore for $m=t_{*}+1$ we have $N^{-m\nu}\leq N^{-(1+\frac{\epsilon}{3})}$
and then 
\begin{equation*}
    \mathbb{P}(\forall y\in[N],D_{y}\leq{\cal C}d\cap|B_{r}(x)\cap{\cal V}|\geq t_{*}+1)\leq r^{t_{*}+1}(Cd){}^{r(t_{*}+1)}N^{-(1+\frac{\epsilon}{3})}=o(N^{-1})
\end{equation*}
where we use that $(1+t_{*})r\log d<\frac{\epsilon\log N}{4(1+t_{*})}$.
Using a union bound, we conclude that 
\[
\mathbb{P}(\exists x\in[N],\ |B_{r}(x)\cap{\cal V}|\geq t_{*}+1)=o(1).
\]
\end{proof}
With Proposition \ref{prop:PreviousProba} we obtain the two first
items of Proposition \ref{prop:XiProbSet}. We now prove the last
item.
\begin{proof}[Proof of Proposition \ref{prop:XiProbSet}, item \ref{enu:LineExistence}]
 We denote ${\cal N}$ the number of tree $T\subset\mathbb{G}|_{{\cal V}}$
that is isomorph to a line of size $t_{*}$ and such that it is connected
to ${\cal V}{}^{c}$ by exactly one edge attached at one of its extremal
point. We use a second-moment method and will prove that 
\begin{align}
\mathbb{E}({\cal N}) & =Nd^{t_{*}}e^{-t_{*}d}\left(1+O\left(\frac{t_{*}^{2}d}{N}\right)\right)\nonumber \\
\mathbb{E}({\cal N}^{2})-\mathbb{E}({\cal N})^{2} & =O\left(\mathbb{E}({\cal N})+1+\frac{t_{*}^{2}d\mathbb{E}({\cal N})^{2}}{N}\right)\label{eq:variance_Line}
\end{align}
Assuming Equation (\ref{eq:variance_Line}) we have $\mathbb{E}({\cal N})\geq N^{1-(1-\epsilon)\frac{t_{*}}{t_{*}}+o(1)}\rightarrow\infty$
where we use \eqref{equ:regimes} and then by Chebyshev's inequality
\[
\mathbb{P}({\cal N}=0)\leq\mathbb{P}\Bigl({\cal N}\leq\frac{\mathbb{E}({\cal N})}{2}\Bigr)
\leq\frac{4\mathbb{E}({\cal N}-\mathbb{E}{\cal N})^{2}   }{\mathbb{E}({\cal N})^{2}}=O\Bigl(\frac{1}{\mathbb{E}({\cal N})}\Bigr)=o(1).
\]
We now prove Equation (\ref{eq:variance_Line}). For $Y=\{y_{1},\cdots,y_{t_{*}},x\}\subset[N]$
, ${\cal L}_{Y}$ the event that $\mathbb{G}\vert_{\{y_{1},\cdots,y_{t_{*}}\}}$
is a line and that $(y_{t_{*}},x)$ is the only edge between $\{y_{1},\cdots,y_{t_{*}}\}$
and $\{y_{1},\cdots,y_{t_{*}}\}^{c}$. Then 
\[
\mathbb{P}({\cal L}_{Y})=\frac{d^{t_{*}}}{N^{t_{*}}}\Bigl(1-\frac{d}{N}\Bigr)^{t_{*}(N-t_{*})+1}
=\frac{d^{t_{*}}}{N^{t_{*}}}e^{-dt^{*}}\Bigl(1+O\Bigl(\frac{t_{*}^{2}d}{N}\Bigr)\Bigr)
\]
Then 
\[
\mathbb{E}({\cal N})=\sum_{Y}\mathbb{P}({\cal L}_{Y})=Nd^{t_{*}}e^{-dt^{*}}\Bigl(1+O\Bigl(\frac{t^{2}d}{N}\Bigr)\Bigr).
\]
We also have 
\[
\mathbb{E}({\cal N}^{2})-\mathbb{E}({\cal N})^{2}=\sum_{Y_{1},Y_{2}}\mathbb{P}({\cal L}_{Y_{1}}\cap{\cal L}_{Y_{2}})-\mathbb{P}({\cal L}_{Y_{1}})\mathbb{P}({\cal L}_{Y_{2}})
\]
If $Y_{1}$ and $Y_{2}$ are disjoint, the probability that there
is no edges between $Y_{1}$ and $Y_{2}$ should be added only once
and we have 
\[
\mathbb{P}({\cal L}_{Y_{1}}\cap{\cal L}_{Y_{2}})=\mathbb{P}({\cal L}_{Y_{1}})\mathbb{P}({\cal L}_{Y_{2}})\Bigl(1-\frac{d}{N}\Bigr)^{-t_{*}^{2}}
\]
Therefore
\[
\sum_{Y_{1},Y_{2},\text{ disjoint }}\mathbb{P}({\cal L}_{Y_{1}}\cap{\cal L}_{Y_{2}})-\mathbb{P}({\cal L}_{Y_{1}})\mathbb{P}({\cal L}_{Y_{2}})=O\Bigl(\frac{dt^{2}}{N}\mathbb{E}({\cal N})^{2}\Bigr)
\]
If $Y_{1}\cap Y_{2}\neq\emptyset$ and $Y_{1}\neq Y_{2}$ we have
that then $Y_{1}\cup Y_{2}$ form a connected component in ${\cal V}$
and then 
\begin{align*}
\sum_{Y_{1}\neq Y_{2},Y_{1}\cap Y_{2}\neq\emptyset}\mathbb{P}({\cal L}_{Y_{1}}\cap{\cal L}_{Y_{2}}) & \leq t_{*}^{2}\mathbb{E}\bigl(\big\vert\{U\text{ connected component in }{\cal V},\,|U|\geq t_{*}+1\}\big\vert\bigr)\\
 & =o(1),
\end{align*}
as in the proof of Item \ref{enu:proba_no_large_cluster} of Proposition
\ref{prop:PreviousProba}. Finally for the case $Y_{1}\cap Y_{2}\neq\emptyset$
and $Y_{1}=Y_{2}$ we just have
\[
\sum_{Y_{1},Y_{2},Y_{1}=Y_{2}}\mathbb{P}({\cal L}_{Y_{1}}\cap{\cal L}_{Y_{2}})=\mathbb{E}({\cal N}).
\]
and Equation (\ref{eq:variance_Line}) follows from the three above
estimates.
\end{proof}

\section{Rigidity results, proof of Proposition \ref{prop:blockdiagonal}}

\label{sec:Rigidity-results}We analyze the spectrum of the graph
locally around the vertices in ${\cal V}.$ We will work on small
clusters of such vertices. Let us start by setting a few notations.
For $M\coloneqq(M_{xy})_{x,y\in[n]}\in\mathbb{R}^{n\times n}$ and
$Y\subseteq[n]$ we denote by $M\vert_{Y}=(M_{xy})_{x,y\in Y}$ the
restriction of $M$ to $Y.$ We write $\lambda_{1}(M)\leq\lambda_{2}(M)\leq\ldots\leq\lambda_{n}(M)$
the increasing ordering of the eigenvalues of $M.$ We use the standard
$\ell^{2}$ norm, $\Vert\cdot\Vert\coloneqq\Vert\cdot\Vert_{2}$.
\begin{defn}
\label{def:U-connected-components}We denote $\mathfrak{U}$ the connected
components of $\cup_{x\in{\cal V}}B_{2}(x)$.
\end{defn}

We define some good properties of the graph.
\begin{defn}
\label{def:Xi} Let $\Xi_{1}$ a probability set such that
\begin{enumerate}
\item For $x\in[N]$, $D_{x}\leq{\cal C}d$.
\item For all $U\in\mathfrak{U}$, $|{\cal V}\cap U|\leq t_{*}$.
\item \label{enu:The-connected-components}The graph restricted to $\cup_{x\in{\cal V}}B_{3}(x)$
is a forest.
\end{enumerate}
Let $\Xi_{2}$ a probability set such that
\begin{enumerate}
\item For $x\in[N]$, $D_{x}\leq{\cal C}d$.
\item $\Vert A(\mathbb{G})-\mathbb{E}(A(\mathbb{G})\Vert\le{\cal C}\sqrt{d}$
\item $|{\cal V}|\leq N^{1-\frac{1}{1+t_{*}}}$,
\end{enumerate}
\end{defn}

These properties occur with high probability and we will assume
that they are satisfied throughout
this section.
\begin{prop}
\label{prop:Xi}$\mathbb{P}(\Xi_{1}\cap\Xi_{2})\geq1-o(1)$
\end{prop}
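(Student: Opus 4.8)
\textbf{Proof plan for Proposition \ref{prop:Xi}.}

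The plan is to show that both $\Xi_1$ and $\Xi_2$ occur with high probability, so that their intersection does as well by a union bound on the complements. The set $\Xi_2$ is immediate: its three defining properties are exactly items \ref{enu:largest_degree}, \ref{enu:norm_adgency_matrix} and \ref{enu:V2_size} of Proposition \ref{prop:PreviousProba}, which we have already established with high probability, so $\mathbb{P}(\Xi_2)\geq 1-o(1)$ with nothing further to prove.

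For $\Xi_1$ we treat the three properties in turn. The degree bound $D_x\leq\mathcal{C}d$ for all $x$ is again item \ref{enu:largest_degree} of Proposition \ref{prop:PreviousProba}. For the second property, fix $r=3$ (which satisfies $r<\frac{\epsilon\log N}{4(1+t_*)^2\log d}$ for $N$ large, since $d$ is at most logarithmic in $N$). On the event $\{D_x\leq\mathcal{C}d \text{ for all }x\}$, every ball $B_2(x)$ with $x\in\mathcal{V}$ is contained in $B_3(y)$ for any $y\in\mathcal{V}\cap B_2(x)$; hence if $U\in\mathfrak{U}$ is a connected component of $\cup_{x\in\mathcal{V}}B_2(x)$, then $\mathcal{V}\cap U$ is contained in a single ball $B_r(z)$ for an appropriate $z\in\mathcal{V}$ (follow a path inside $U$ connecting two vertices of $\mathcal{V}\cap U$; consecutive such vertices are within graph distance at most $4$, but one can instead invoke item \ref{enu:proba_no_large_cluster} after noting that all of $\mathcal{V}\cap U$ lies within distance $r$ of one of its members once $r$ is taken a fixed multiple of $t_*$). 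Cleanly: enlarge $r$ to a fixed constant $\geq 2(t_*+1)$, still admissible; then any two vertices of $\mathcal{V}\cap U$ are joined by a path in $\mathbb{G}$ of length at most $2|\mathcal{V}\cap U|\cdot 2$, and if $|\mathcal{V}\cap U|\geq t_*+1$ this would put more than $t_*$ vertices of $\mathcal{V}$ inside some $B_r(z)$, contradicting item \ref{enu:proba_no_large_cluster}. So the second property holds with high probability. For the third property, apply item \ref{enu:proba-item-tree} of Proposition \ref{prop:PreviousProba} with the same fixed $r$: with high probability, for every $x\in\mathcal{V}$ the ball $B_r(x)\supseteq B_3(x)$ is a tree, hence contains no cycle; since $\cup_{x\in\mathcal{V}}B_3(x)$ is a union of such cycle-free balls, any cycle in the induced subgraph $\mathbb{G}|_{\cup_{x\in\mathcal{V}}B_3(x)}$ would lie within a single ball $B_r(x)$ (a cycle is connected and of bounded length once the degrees are bounded, so it stays inside one ball), which is excluded. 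Thus $\mathbb{G}$ restricted to $\cup_{x\in\mathcal{V}}B_3(x)$ is a forest with high probability.

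Combining, $\mathbb{P}(\Xi_1^c)\leq o(1)$ and $\mathbb{P}(\Xi_2^c)\leq o(1)$, so $\mathbb{P}(\Xi_1\cap\Xi_2)\geq 1-\mathbb{P}(\Xi_1^c)-\mathbb{P}(\Xi_2^c)\geq 1-o(1)$. The only step requiring genuine care is the passage from the local statements ``$B_r(x)$ is a tree'' and ``$|B_r(x)\cap\mathcal{V}|\leq t_*$'' to the global statements about the components $U\in\mathfrak{U}$ and about the union $\cup_{x\in\mathcal{V}}B_3(x)$: one must check that a bounded-diameter object (a cycle, or a path between two vertices of $\mathcal{V}$ inside the same component of $\cup B_2(x)$) is always contained in a single ball $B_r(z)$ of the fixed radius $r$, which is where the uniform degree bound and the choice of $r$ as a sufficiently large fixed multiple of $t_*$ enter. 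I expect this ``bounded locality'' bookkeeping to be the main (mild) obstacle; everything else is a direct quotation of Proposition \ref{prop:PreviousProba}.
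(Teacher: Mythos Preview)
Your overall strategy matches the paper's: everything reduces to Proposition~\ref{prop:PreviousProba}, and the only substantive step is the ``bounded locality'' bookkeeping you flag at the end. However, both of your proposed resolutions of that step have genuine gaps.

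For property~2 of $\Xi_1$: your diameter bound ``any two vertices of $\mathcal{V}\cap U$ are joined by a path of length at most $4|\mathcal{V}\cap U|$'' is circular, because $|\mathcal{V}\cap U|$ is exactly the quantity you want to bound and is a priori uncontrolled; taking $r\geq 2(t_*+1)$ does not help if $|\mathcal{V}\cap U|$ happens to be, say, $100t_*$. The paper avoids this with a chain (BFS) ordering: since $U$ is a connected component of $\cup_{x\in\mathcal{V}}B_2(x)$, one can list $y_1,\dots,y_m\in\mathcal{V}\cap U$ so that each $y_i$ lies within graph distance $5$ of some earlier $y_j$. Then, \emph{regardless of $m$}, the first $t_*+1$ of them all sit in $B_{5t_*}(y_1)$, contradicting item~\ref{enu:proba_no_large_cluster} once $r\geq 5t_*$. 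The point is that you only need $t_*+1$ nearby $\mathcal{V}$-vertices, not all of them.

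For property~3 of $\Xi_1$: the assertion ``a cycle is of bounded length once the degrees are bounded'' is false (an $n$-cycle is $2$-regular). A cycle in $\mathbb{G}|_{\cup_{x\in\mathcal{V}}B_3(x)}$ could in principle be long; you cannot conclude directly that it fits in one ball $B_r(x)$. The correct route is to first run the same chain argument on the connected component $U'$ of $\cup_{x\in\mathcal{V}}B_3(x)$ containing the putative cycle (now consecutive $\mathcal{V}$-vertices are within distance $7$), deduce $|\mathcal{V}\cap U'|\leq t_*$ via item~\ref{enu:proba_no_large_cluster}, and hence that $U'$ has diameter $O(t_*)$; then $U'\subset B_r(z)$ for some $z\in\mathcal{V}$ and fixed $r$, and item~\ref{enu:proba-item-tree} excludes the cycle. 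The paper glosses over this as ``also follows from Proposition~\ref{prop:PreviousProba}'', but this is the actual content.
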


\begin{proof}[Proof of Proposition \ref{prop:Xi}]
If $U$ is a cluster with $t_{*}+1$ vertices in ${\cal V}$, then
there is a chain $y_{1},\cdots,y_{t_{*}+1}\in\mathcal{V}$ such that
for any $1<i\leq t_{*}+1$, there exists $j<i$ with $\text{dist}_{\mathbb{G}}(y_{i},y_{j})\leq5$.
Therefore $y_{j}\in B_{y_{1}}(5t_{*})$ for all $j\leq t_{*}+1$ and
we conclude that the probability is small by Proposition \ref{prop:PreviousProba},
item \ref{enu:proba_no_large_cluster}. The other items in $\Xi_{1}$
and $\Xi_{2}$ also follow from Proposition \ref{prop:PreviousProba}.
\end{proof}
\begin{prop}
\label{prop:Tree_to_Ball}On the event $\Xi_{1}$, for any $U\in\mathfrak{U}$
and $F={\cal V}\cap U$, 
\[
\sum_{\lambda\in\text{Spec}(L(\mathbb{G})|_{U})}\delta_{\lambda},\qquad\text{and}\qquad\sum_{\mu\in\text{Spec}(L(F;\widehat{\partial F}))}\delta_{\mu+\epsilon_{\mu}}
\]

agree on the interval $[0,\frac{\tau}{2}d]$ and such that $\epsilon_{\mu}=O\bigl((\mu+1)d^{-1}\bigr)$
for all $\mu$.
\end{prop}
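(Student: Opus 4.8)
The plan is to compare the spectrum of $L(\mathbb{G})|_U$ with that of the weighted Laplacian $L(F;\widehat{\partial F})$ where $F = \mathcal{V}\cap U$, exploiting the fact that on $\Xi_1$ the cluster $U$ has a very simple local geometry: $F$ is a forest of at most $t_*$ vertices, and $U$ consists of $F$ together with the balls $B_2(x)$ for $x\in F$, all of which form a forest (by property \ref{enu:The-connected-components} of $\Xi_1$). The key point is that every vertex of $U\setminus F$ has degree $>\tau d$ in $\mathbb{G}$, so the diagonal of $L(\mathbb{G})|_U$ restricted to $U\setminus F$ is at least $\tau d$ up to the boundary corrections coming from edges leaving $U$; heuristically the ``high-degree'' part of $U$ lives at energy scale $\gtrsim \tau d$ and decouples from the low-lying spectrum, which is governed by $F$.

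First I would set up the Schur complement (equivalently, the resolvent identity) for $L(\mathbb{G})|_U$ with respect to the block decomposition $U = F \sqcup (U\setminus F)$. Write $L(\mathbb{G})|_U = \begin{pmatrix} L_{FF} & L_{FR} \\ L_{RF} & L_{RR}\end{pmatrix}$ with $R = U\setminus F$. For a spectral parameter $z$ in $[0,\frac{\tau}{2}d]$, the block $L_{RR} - z$ is invertible with $\|(L_{RR}-z)^{-1}\| = O((\tau d)^{-1})$, because the diagonal entries of $L_{RR}$ are degrees $\ge \tau d$ and $L_{RR}$ is a weighted Laplacian on the forest $\mathbb{G}|_R$ (plus a positive diagonal from outgoing edges), hence $L_{RR} \succeq (\tau d - O(\sqrt d))\,\mathrm{Id}$ up to an argument using $\|A(\mathbb{G}) - \mathbb{E} A(\mathbb{G})\| = O(\sqrt d)$ from $\Xi_2$; actually, since $\mathbb{G}|_R$ is a forest its adjacency norm is $O(1)$-times $\sqrt{\text{max degree}}$, but more simply one bounds $\|L_{RR} - D_{RR}\| = \|A|_R\| = O(\sqrt{\mathcal C d})$ and $D_{RR}\succeq \tau d$, so $L_{RR} \succeq (\tau d - O(\sqrt d))\mathrm{Id} \succeq \frac{\tau}{2}d\,\mathrm{Id}$ for $d$ large. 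Then the eigenvalues of $L(\mathbb{G})|_U$ in $[0,\frac{\tau}{2}d]$ are exactly the solutions $\mu$ of $\det\bigl(L_{FF} - \mu - L_{FR}(L_{RR}-\mu)^{-1}L_{RF}\bigr) = 0$, i.e. the eigenvalues of the $|F|\times|F|$ self-adjoint matrix pencil $S(\mu) = L_{FF} - L_{FR}(L_{RR}-\mu)^{-1}L_{RF}$.

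Second, I would identify $S(0)$, or rather the ``leading'' Schur complement, with $L(F;\widehat{\partial F})$ up to the claimed error. Here the structure of the forest is essential: each $x\in F$ has $\widehat{\partial F}(x)$ edges leaving $F$, and each such edge goes to a distinct vertex in $R$ (since $\mathbb{G}|_{\cup B_3(x)}$ is a forest, there are no multi-paths back into $F$); thus $L_{FR}L_{FR}^* = \sum_{x\in F}\widehat{\partial F}(x)\,\mathbf{1}_x\mathbf{1}_x^*$ (diagonal), and more precisely $L_{FR}(L_{RR})^{-1}L_{RF}$ equals this diagonal matrix times $1 + O(1/\tau d)$ plus off-diagonal terms of size $O(1/\tau d)$ that arise only from length-$\ge 2$ paths through $R$ connecting two vertices of $F$ — and the forest condition forces any two vertices of $F$ joined by such a path to be at graph distance $\le 4$, keeping the count $O(1)$. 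Hence $S(\mu) = L_{FF} + \sum_{x\in F}\widehat{\partial F}(x)\mathbf{1}_x\mathbf{1}_x^* + E(\mu) = L(F;\widehat{\partial F}) + E(\mu)$ with $\|E(\mu)\| = O((1+\mu)/d)$ uniformly for $\mu\in[0,\frac{\tau}{2}d]$ (the $(1+\mu)$ comes from differentiating $(L_{RR}-\mu)^{-1}$, whose derivative is $O((\tau d)^{-2})$, over the interval). By a standard eigenvalue perturbation / continuity argument for the pencil $\mu\mapsto S(\mu)$ — each eigenvalue of $L(F;\widehat{\partial F})$, call it $\mu$, is matched by a fixed point of $\mu'\mapsto \mu$-th eigenvalue of $S(\mu')$, which exists and is unique in an $O((1+\mu)/d)$-neighbourhood because $S$ is Lipschitz in $\mu$ with small constant — one gets the bijection between $\mathrm{Spec}(L(\mathbb{G})|_U)\cap[0,\frac{\tau}{2}d]$ and $\mathrm{Spec}(L(F;\widehat{\partial F}))$ with displacements $\epsilon_\mu = O((1+\mu)d^{-1})$.

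The main obstacle I anticipate is making the ``decoupling'' rigorous and uniform: precisely controlling $L_{FR}(L_{RR}-\mu)^{-1}L_{RF}$ as a perturbation of the diagonal $\widehat{\partial F}$-matrix, including the off-diagonal contributions from short paths through $R$, and ensuring the error is genuinely $O((1+\mu)/d)$ rather than, say, $O(1/\tau^2 d)$ with a bad $\tau$-dependence that would still be $o(1)$ but messier — this requires carefully using the forest property \ref{enu:The-connected-components} of $\Xi_1$ to enumerate the relevant paths and the degree lower bound $D_y > \tau d$ on $R$ to bound the resolvent, together with the counting bound $|U\cap\mathcal V|\le t_*$ so that $|F|$ stays $O(1)$. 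A secondary technical point is the continuity/fixed-point argument turning the pencil statement into a clean spectral bijection, but that is routine once the norm bound on $E(\mu)$ and its $\mu$-Lipschitz constant are in hand. Proposition \ref{prop:blockdiagonal} then follows from Proposition \ref{prop:Tree_to_Ball} by summing over $U\in\mathfrak{U}$ and observing that the trees $T\in\mathfrak{F}$ are exactly the connected components $\mathcal V\cap U$, together with the fact that $\frac12\tau d \ge \frac14\tau d$ so the interval $[0,\frac12\tau d]$ of Proposition \ref{prop:blockdiagonal} is contained in the interval of Proposition \ref{prop:Tree_to_Ball}, and that $L(\mathbb{G})$ restricted to $(\cup_{x\in\mathcal V}B_2(x))^c$ contributes no eigenvalue below $\frac12\tau d$ other than those of the giant component's complement — wait, more carefully, one uses that outside $\mathfrak U$ the graph is either the giant component (whose Laplacian, being $D - A$ with all degrees $>\tau d$ and $\|A - \mathbb E A\| = O(\sqrt d)$, has no nonzero eigenvalue below $\frac12\tau d$) contributing only the single $\delta_0$, or already accounted for; this last bookkeeping gives the $\delta_0 + \sum_{T\in\mathfrak F}\dots$ form.
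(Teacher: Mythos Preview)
Your Schur-complement setup is correct and coincides with the paper's block decomposition (with $L_{FF}=A$, $L_{RR}=C$, $L_{FR}=B^*$), but there is a genuine gap in the perturbation step. You assert $\|E(\mu)\|=O((1+\mu)/d)$ for $E(\mu)=-L_{FR}(L_{RR}-\mu)^{-1}L_{RF}$, yet the bounds you yourself derive give only $\|B\|^{2}\le\tau d$ and $\|(C-\mu)^{-1}\|\le 4/(\tau d)$, hence $\|E(\mu)\|\le 4$, an $O(1)$ operator, not $O(d^{-1})$. (There is also a bookkeeping slip: $L_{FF}=L(\mathbb G)|_F$ already \emph{equals} $L(F;\widehat{\partial F})$, since its diagonal carries the full degrees in $\mathbb G$; the Schur correction $B^*(C-\mu)^{-1}B$ is \emph{not} close to $\mathrm{diag}(\widehat{\partial F}(x))$ --- its diagonal entries are $\sum_{y\sim x}(C-\mu)^{-1}_{yy}\asymp \widehat{\partial F}(x)/(\tau d)=O(1)$.) With only $\|E(\mu)\|=O(1)$ your fixed-point/pencil argument cannot produce displacements of size $O((1+\mu)/d)$.

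The missing idea, which the paper supplies, is an a priori localisation bound on the eigenvector. From the Schur relation $\lambda\|u_1\|^{2}=\langle u_1,L(F)u_1\rangle+\sum_{x\in\partial F}\widehat{\partial F}(x)u_1(x)^{2}-s^{2}\langle u_1,B^*(C-\lambda)^{-1}Bu_1\rangle$ and positivity of $L(F)$ one extracts
\[
\sum_{x\in\partial F}\widehat{\partial F}(x)\,u_1(x)^{2}\;\le\;(\lambda+4)\|u_1\|^{2},
\]
so $u_1$ is small exactly where $B$ is large: $\|Bu_1\|^{2}\le(\lambda+4)\|u_1\|^{2}$, and hence $\langle u_1,E(\mu)u_1\rangle=O((\lambda+1)/d)$ even though $\|E(\mu)\|=O(1)$. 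The paper converts this into a clean eigenvalue bound by interpolating $H(s)=\bigl(\begin{smallmatrix}A&sB^*\\ sB&C\end{smallmatrix}\bigr)$, $s\in[0,1]$, and using the inequality above (valid for every $s$) together with $|\partial F|\le t_*$ to get $\bigl|\tfrac{d}{ds}\lambda(s)\bigr|\le\tfrac{8t_*}{\tau d}(\lambda(s)+4)$; integrating over $s$ yields $\epsilon_\mu=O((1+\mu)d^{-1})$. Your Schur-complement route could be salvaged, but only after inserting this eigenvector-localisation step --- a naive operator-norm bound on $E(\mu)$ does not suffice.
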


\begin{proof}[Proof of Proposition \ref{prop:Tree_to_Ball}]
 Let us consider a connected component $U\in\mathcal{U}$ and the
restriction of the Laplacian to this connected component $H:=L(\mathbb{G})\vert_{U}.$
Let $A:=L(\mathbb{G})\vert_{F}$, $C:=L(\mathbb{G})\vert_{U\setminus F}$
and $B\in\lbrace0,-1\rbrace^{\vert F\vert\times\vert U\setminus F\vert}$
such that 
\[
H=\begin{pmatrix}A & B^{*}\\
B & C
\end{pmatrix}.
\]

Then recalling Definition \ref{def:weighted_Laplacian}, we see that
$A=L(F,\widehat{\partial F})$. Note that $A$ may be itself a block
matrix if $F$ is made up of disjoint trees. Moreover the matrix $C$
is given by the sum of a diagonal matrix with entries in the interval
$[\tau d,+\infty)$ and the adjacency matrix of a forest with maximal
degree ${\cal C}d$ by item \ref{enu:The-connected-components} of
$\Xi_{1}$. By perturbation theory and Lemma \ref{lem:adj_of_tree}
we know that 
\begin{equation}
\ensuremath{\mathrm{\mathrm{spec}}C}\subseteq\Bigl[\tau d-2\sqrt{{\cal C}d},\infty\Bigr)\subseteq\bigl[3\tau d/4,\infty\bigr)\label{eq:Bound_C_spectrum}
\end{equation}
for $d$ large enough. In addition, since $B$ is given as the
projection of the adjacency matrix of a collection of disjoint stars
(rooted at vertices of $\partial F$), we know that $\Vert B\Vert\leq\sqrt{\tau d}$.

We define 
\[
H(s)=\begin{pmatrix}A & sB^{*}\\
sB & C
\end{pmatrix}\qquad\text{for }s\in[0,1].
\]
Here $H(1)=H$ and $H(0)$ is block diagonal with the matrices $A$
and $C$ therefore $\ensuremath{\mathrm{\mathrm{spec}}(H(0))}\cap[0,\frac{3}{4}\tau d)=\ensuremath{\mathrm{\mathrm{spec}}(A)}\cap[0,\frac{3}{4}\tau d)$.
We write $\lambda_{1}(s)\leq\lambda_{2}(s)\leq\cdots$ the eigenvalues
of $H(s)$. By perturbation theory, these are Lipschitz functions.
More precisely suppose $\lambda_{i}(s)\in\mathrm{spec}(H(s))\cap[0,\frac{\tau}{2}d]$,
write $u(s)\in\mathbb{R^{\vert U\vert}}$ the corresponding normalized
eigenvector and denote by $u_{1}(s):=u(s)\vert_{F}$ and $u_{2}(s):=u(s)\vert_{U\setminus F}$.
We have 
\begin{align}
\frac{d}{ds}\lambda_{i}(s) & =\Big\langle u(s),\left[\frac{d}{ds}H(s)\right]u(s)\Big\rangle=2\big\langle u_{1}(s),B^{*}u_{2}(s)\big\rangle\label{eq:Lipshitz_eigenvalue}
\end{align}
where we use that $2\langle\frac{d}{ds}u(s),H(s)u(s)\rangle=\lambda(s)\frac{d}{ds}\|u(s)\|^{2}=0$.
In the case of a degenerate eigenvalue, $u(s)$ is not well defined. However
one can still choose a normalized vector in the eigenspace such that
the above equation is true (following the classical proof that the
eigenvalues are Lipschitz using min-max principle).

We will prove that 
\begin{equation}
\Big\vert\frac{d}{ds}\lambda_{i}(s)\Big\vert\leq\frac{8t_{*}}{\tau d}(\lambda_{i}(s)+4),\qquad s\in[0,1],\label{eq:unif-bound-lambdaprime}
\end{equation}
from which we will conclude that 
\[
\vert\lambda-\mu\vert=|\lambda_{i}(0)-\lambda_{i}(1)|=O\bigl((\lambda_{i}(0)+1)d^{-1}\bigr).
\]
Recall here that $\lambda_{i}(0)\in\mathrm{\mathrm{spec}}(A)=\mathrm{\mathrm{spec}}(L(F;\widehat{\partial F}))$
and $\lambda_{i}(1)\in\mathrm{\mathrm{spec}}(L(\mathbb{G})|_{U})$.

Let us fix $s\in[0,1]$ and write $\lambda=\lambda(s)$ and $u_{i}=u_{i}(s)$
in the following computations. The eigenvalue-eigenvector equation
can be written in blocks as 
\[
\begin{pmatrix}A-\lambda & sB^{*}\\
sB & C-\lambda
\end{pmatrix}\begin{pmatrix}u_{1}\\
u_{2}
\end{pmatrix}=0.
\]
Using (\ref{eq:Bound_C_spectrum}), $\Vert B\Vert\leq\sqrt{\tau d}$
and standard perturbation theory, we see that $$\min_{\nu\in\mathrm{\mathrm{spec}}(C)}|\lambda-\nu|\geq\frac{\tau d}{4}.$$
The matrix $C-\lambda$ is thus invertible with $\|(C-\lambda)^{-1}\|\leq\frac{\tau d}{4}$.
After a couple of substitutions, the eigenvalue-eigenvector equation
becomes 
\begin{equation}
\begin{cases}
u_{2}=-s(C-\lambda)^{-1}Bu_{1} & ,\\
(A-s^{2}B^{*}(C-\lambda)^{-1}B)u_{1}=\lambda u_{1} & .
\end{cases}\label{eq:block_equa_lambda_1_2}
\end{equation}
\label{eq:block-eequ-lambda}Replacing $A$ by the deformed Laplacian
matrix and applying $u_{1}^{*}$ to the second line yields 
\[
\lambda\|u_{1}\|^{2}=\langle u_{1},L(F)u_{1}\rangle+\sum_{x\in\partial F}\widehat{\partial F}(x)u_{1}(x)^{2}-s^{2}\big\langle u_{1},(B^{*}(C-\lambda)^{-1}B)u_{1}\big\rangle.
\]
and then 
\begin{align}
\sum_{x\in\partial F}\widehat{\partial F}(x)u_{1}(x)^{2}\leq\lambda\|u_{1}\|^{2}+s^{2}\big\vert\big\langle u_{1},(B^{*}(C-\lambda)^{-1}B)u_{1}\big\rangle \big\vert \leq(\lambda+4)\|u_{1}\|^{2}\label{eq:eval-rigidity}
\end{align}
where we use the fact that $L(F)$ is positive and that $\|B^{*}(C-\lambda)^{-1}B\|\leq\Vert B\Vert^{2}\Vert(C-\lambda)^{-1}\Vert\leq4$.
Finally, applying $\langle u_{1}^{*},B^{*}\cdot\rangle$ to the first
line of \eqref{eq:block_equa_lambda_1_2}, we get 
\begin{align*}
\big\vert\big\langle u_{1},B^{*}u_{2}\big\rangle\big\vert & =\big\vert\big\langle u_{1},B^{*}(C-\lambda)^{-1}Bu_{1}\big\rangle\big\vert\\
 & =\Big\vert \sum_{x,y\in\partial F}u_{1}(x)u_{1}(y)\big\langle B1_{y},(C-\lambda)^{-1}B1_{x}\big\rangle\Big\vert\\
 & \leq\sum_{x,y\in\partial F}|u_{1}(x)||u_{1}(y)|\frac{4\sqrt{\widehat{\partial F}(x)\widehat{\partial F}(y)}}{\tau d}\\
 & =\frac{4}{\tau d}\left(\sum_{x\in\partial F}|u_{1}(x)|\sqrt{\widehat{\partial F}(x)}\right)^{2}\\
 & \leq\frac{4t_{*}}{\tau d}(\lambda+4)\|u_{1}\|^{2}.
\end{align*}
by Cauchy-Schwartz and \eqref{eq:eval-rigidity}. This proves \eqref{eq:unif-bound-lambdaprime}
and concludes the proof.
\end{proof}
\begin{prop}
\label{prop:Exponential_decay}On the event $\Xi_{1}$, for any $U\in\mathfrak{U}$,
$\lambda\in[0,\frac{\tau}{2}d]\cap\text{Spec}(L(\mathbb{G}|_{U}))$
and $\boldsymbol{u}$ the associated eigenvector we have 
\[
\|\boldsymbol{u}|_{\partial U}\|=O\bigl(d^{-1}\bigr)
\]
\end{prop}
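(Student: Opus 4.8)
The plan is to exploit the decomposition already set up in the proof of Proposition~\ref{prop:Tree_to_Ball}. Fix $U\in\mathfrak{U}$, an eigenvalue $\lambda\in[0,\frac{\tau}{2}d]$ of $H:=L(\mathbb{G})|_U$ with normalized eigenvector $\boldsymbol u$, and write $F=\mathcal V\cap U$, $\boldsymbol u_1=\boldsymbol u|_F$, $\boldsymbol u_2=\boldsymbol u|_{U\setminus F}$. The block structure $H=\begin{pmatrix}A&B^*\\B&C\end{pmatrix}$ with $A=L(F;\widehat{\partial F})$ is exactly as before, and from \eqref{eq:Bound_C_spectrum} together with $\lambda\leq\frac{\tau}{2}d$ we again get $\min_{\nu\in\mathrm{spec}(C)}|\lambda-\nu|\geq\frac{\tau d}{4}$, hence $C-\lambda$ is invertible with $\|(C-\lambda)^{-1}\|\leq\frac{4}{\tau d}$. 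The first line of the block eigenvalue equation \eqref{eq:block_equa_lambda_1_2} reads $\boldsymbol u_2=-(C-\lambda)^{-1}B\boldsymbol u_1$, so
\[
\|\boldsymbol u_2\|\leq\|(C-\lambda)^{-1}\|\,\|B\|\,\|\boldsymbol u_1\|\leq\frac{4}{\tau d}\sqrt{\tau d}\,\|\boldsymbol u_1\|=\frac{4}{\sqrt{\tau d}}\,\|\boldsymbol u_1\|,
\]
using $\|B\|\leq\sqrt{\tau d}$. Since $\|\boldsymbol u_1\|\leq 1$ this already gives $\|\boldsymbol u_2\|=O(d^{-1/2})$, but the statement asks for $O(d^{-1})$ on $\partial U$, not merely on all of $U\setminus F$, so one more iteration of the resolvent identity is needed.

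The refinement is to observe that $\partial U\subseteq U\setminus F$ consists of vertices at distance exactly the "outer radius" from $\mathcal V$, i.e. vertices of $U$ adjacent to $U^c$; in particular, by item~\ref{enu:The-connected-components} of $\Xi_1$ (the graph on $\cup_{x\in\mathcal V}B_3(x)$ is a forest) and the definition $\mathfrak U$ = connected components of $\cup_{x\in\mathcal V}B_2(x)$, every vertex of $\partial U$ lies in $S_2(x)$ for some $x\in F$, hence has all but one of its neighbours outside $U$ — so the relevant rows of $C$ have large diagonal entry ($\geq\tau d$) and only $O(1)$ off-diagonal mass inside $U$. Writing $\boldsymbol u_2=-(C-\lambda)^{-1}B\boldsymbol u_1$ and then expanding once more via $C=D_C-A_C$ (diagonal minus forest-adjacency inside $U\setminus F$), one gets on the block $\partial U$ a bound of the shape
\[
\|\boldsymbol u|_{\partial U}\|\leq\frac{C'}{\tau d}\bigl(\|B\|\,\|\boldsymbol u_1\|+\|A_C\|\,\|\boldsymbol u_2\|\bigr)\leq\frac{C'}{\tau d}\Bigl(\sqrt{\tau d}+2\sqrt{\mathcal Cd}\cdot\frac{4}{\sqrt{\tau d}}\Bigr)\cdot O(1),
\]
which is $O(d^{-1/2})$ again unless one is more careful. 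The genuinely correct route is to iterate the resolvent bound along the tree: because $\cup_{x\in\mathcal V}B_3(x)$ is a forest and $\partial U$ sits at graph distance $\geq 2$ from $F$ inside $U$, one applies $\boldsymbol u_2=-(C-\lambda)^{-1}B\boldsymbol u_1$ and then uses that $B\boldsymbol u_1$ is supported on $S_1$-type vertices while the resolvent $(C-\lambda)^{-1}$ restricted to a forest with diagonal $\geq\tau d$ decays geometrically in distance with ratio $O(\sqrt{\mathcal C d}/(\tau d))=O(d^{-1/2})$ per step; two steps from the support of $B\boldsymbol u_1$ to $\partial U$ then yield the factor $d^{-1}$.

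The main obstacle — and the point that needs care — is precisely this last geometric-decay estimate for $(C-\lambda)^{-1}$ on a forest: one must show that the matrix element $\langle \mathbf 1_a,(C-\lambda)^{-1}\mathbf 1_b\rangle$ is bounded by $(\tau d)^{-1}(\kappa/\sqrt{\tau d})^{\mathrm{dist}(a,b)}$ for some constant $\kappa$, uniformly over $a,b\in U\setminus F$. This follows from the Combes–Thomas / Neumann-series argument: write $(C-\lambda)^{-1}=(D_C-\lambda)^{-1}\sum_{k\geq0}\bigl(A_C(D_C-\lambda)^{-1}\bigr)^k$, note $\|(D_C-\lambda)^{-1}\|\leq 4/(\tau d)$ and $\|A_C(D_C-\lambda)^{-1}\|\leq 2\sqrt{\mathcal C d}\cdot 4/(\tau d)=O(d^{-1/2})<1$ for $d$ large, and use that on a forest $(A_C)^k$ connects $a$ to $b$ only if $k\geq\mathrm{dist}(a,b)$; summing the tail of the Neumann series gives the claimed bound. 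Combining this with $\|B\boldsymbol u_1\|\leq\sqrt{\tau d}$, with the fact that $B\boldsymbol u_1$ is supported at distance $\leq 1$ from $F$, and with $\mathrm{dist}(\partial U,F)\geq 2$, we obtain $\|\boldsymbol u|_{\partial U}\|\leq \frac{4}{\tau d}\cdot O(d^{-1/2})\cdot \sqrt{\tau d}=O(d^{-1})$, which is the assertion.
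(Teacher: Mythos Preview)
Your argument is correct and, despite the Combes--Thomas framing, is essentially the paper's own proof: the paper applies the second resolvent identity $(C-\lambda)^{-1}=(D-\lambda)^{-1}+(C-\lambda)^{-1}A_C(D-\lambda)^{-1}$, observes that $(D-\lambda)^{-1}B\boldsymbol u_1$ is supported on $S_1(F)$ and therefore vanishes on $\partial U$, and bounds the remaining term by $\|(C-\lambda)^{-1}\|\,\|A_C\|\,\|(D-\lambda)^{-1}\|\,\|B\|=O(d^{-1})$ --- this is exactly your Neumann series with the $k=0$ term removed and the tail resummed. One small slip in your narrative: the distance from $\mathrm{supp}(B\boldsymbol u_1)\subset S_1(F)$ to $\partial U\subset S_2(F)$ is $1$, not $2$, so it is a single step of geometric decay together with the leading $(D-\lambda)^{-1}$ factor that produces $d^{-1}$; your final displayed bound has the right bookkeeping regardless.
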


\begin{proof}[Proof of Proposition \ref{prop:Exponential_decay}]
We use the same notation as for the proof of Proposition \ref{prop:Tree_to_Ball}.
Setting $s=1$ in \ref{eq:block-eequ-lambda}, we have 
\[
u_{2}=-(C-\lambda)^{-1}Bu_{1}.
\]
Let us write $C=D-A(\mathbb{G}|_{U\setminus F})$ with with $D:=D(\mathbb{G})|_{U\setminus F}$.
The second resolvent identity reads
\begin{align*}
(C-\lambda)^{-1} & =(D-\lambda)^{-1}+(C-\lambda)^{-1}A(\mathbb{G}|_{U\setminus F})(D-\lambda)^{-1},
\end{align*}
and because $\text{supp}((D-\lambda)^{-1}Bu_{1})\subset\cup_{x\in F}S_{1}(x)$,
we have $$\text{supp}\bigl((D-\lambda)^{-1}Bu_{1}\cap\partial U\bigr)=\emptyset.$$
We thus find that 
\[
u_{2}|_{\partial U}=-\left((C-\lambda)^{-1}A(\mathbb{G}|_{U\setminus F})(D-\lambda)^{-1}Bu_{1}\right)\big\vert_{\partial U}
\]
The graph $\mathbb{G}|_{U\setminus F}$ is a tree so by Lemma \ref{lem:adj_of_tree},
we have $\|A(\mathbb{G}|_{U\setminus F})\|\leq2\sqrt{{\cal C}d}$.We
can use the same bounds for $\Vert B\Vert,$ $\|(D-\lambda)^{-1}\|$
and $\|(C-\lambda)^{-1}\|$ as in the proof of Proposition \ref{prop:Tree_to_Ball}.
Putting everything together, we find 
\[
\|\boldsymbol{u}|_{\partial U}\|=\|u_{2}|_{\partial U}\|\leq\frac{16\sqrt{{\cal C}}}{\tau^{3/2}d}\|u_{1}\|.
\]
\end{proof}
\begin{prop}
\label{prop:trivialevect-1}On the event $\Xi_{2},$ the vector $\mathbf{q}:=\frac{1}{\sqrt{|Y|}}\mathrm{1_{Y}},$where
$Y:=[N]\setminus\bigcup_{x\in\mathcal{V}}B_{3}(x)$ satisfies for
any $c<\frac{1}{1+t_{*}}$
\[
\|\mathbf{e}-\mathbf{q}\Vert^{2}=O(N^{-c}),\qquad\langle\mathbf{q,}L(\mathbb{G})\mathbf{q}\rangle=O(N^{-c}).
\]
\end{prop}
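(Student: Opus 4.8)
The plan is to prove both estimates by controlling the size of the complement set $Y^c=\bigcup_{x\in\mathcal{V}}B_3(x)$ and then using that $L(\mathbb{G})\mathbf{1}_{[N]}=0$ together with the local tree structure near $\mathcal{V}$. First I would estimate $|Y^c|$. On $\Xi_2$ we have $|\mathcal{V}|\leq N^{1-1/(1+t_*)}$ and every vertex has degree at most $\mathcal{C}d$, so $|B_3(x)|\leq 1+\mathcal{C}d+(\mathcal{C}d)^2+(\mathcal{C}d)^3=O(d^3)$ for each $x\in\mathcal{V}$, whence $|Y^c|\leq |\mathcal{V}|\cdot O(d^3)=O(N^{1-1/(1+t_*)}d^3)=O(N^{1-c})$ for any $c<\frac{1}{1+t_*}$, since $d\leq\log N$ is sub-polynomial. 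Consequently $|Y|=N(1-O(N^{-c}))$, and since $\mathbf{e}$ and $\mathbf{q}$ are the two normalized indicator vectors of $[N]$ and $Y$ respectively, a direct computation gives $\|\mathbf{e}-\mathbf{q}\|^2=2-2\langle\mathbf{e},\mathbf{q}\rangle=2-2\sqrt{|Y|/N}=O(|Y^c|/N)=O(N^{-c})$. This handles the first bound.

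For the second bound, I would use the quadratic form identity $\langle\mathbf{q},L(\mathbb{G})\mathbf{q}\rangle=\frac{1}{|Y|}\sum_{(a,b)\in E}(\mathbf{1}_Y(a)-\mathbf{1}_Y(b))^2=\frac{1}{|Y|}\cdot\#\{\text{edges with exactly one endpoint in }Y\}$. An edge $(a,b)$ with $a\in Y$, $b\in Y^c$ has its endpoint $b$ inside some $B_3(x)$ with $x\in\mathcal{V}$; but in fact, because $Y^c=\bigcup_{x\in\mathcal{V}}B_3(x)$ is a union of balls of radius $3$, the endpoint $a\in Y$ at graph-distance one from $b$ would have to lie in $S_4(x)$ for every $x\in\mathcal V$ whose ball contains $b$. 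The number of such boundary edges is at most the number of edges incident to $\partial(Y^c)$, which is bounded by $|Y^c|\cdot\mathcal{C}d=O(N^{1-c}d)=O(N^{1-c'})$ for any $c'<c<\frac{1}{1+t_*}$ (again absorbing the sub-polynomial factor $d$). Dividing by $|Y|=N(1-o(1))$ gives $\langle\mathbf{q},L(\mathbb{G})\mathbf{q}\rangle=O(N^{-c'})$, and renaming $c'$ as $c$ (which is permissible since $c$ ranges over all values below $\frac{1}{1+t_*}$) completes the proof.

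The only mildly delicate point is making sure the polynomial savings $N^{-1/(1+t_*)}$ is not eaten by the polylogarithmic factors $d^3$ and $d$: this is exactly why the statement is phrased with the open inequality $c<\frac{1}{1+t_*}$ rather than with $c=\frac{1}{1+t_*}$, and why $\Xi_2$ is invoked rather than a sharper bound. So the main (very minor) obstacle is bookkeeping: fixing some $c_0\in(c,\frac{1}{1+t_*})$, bounding $|Y^c|=O(N^{1-c_0}\,\mathrm{polylog}N)=O(N^{1-c})$, and then propagating this through both the overlap computation and the boundary-edge count. There is no conceptual difficulty; everything reduces to counting vertices and edges in a bounded-degree neighbourhood of the small set $\mathcal{V}$, using the three properties that define $\Xi_2$.
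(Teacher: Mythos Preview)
Your proposal is correct and follows essentially the same approach as the paper: bound $|Y^c|\le|\mathcal V|\cdot O(d^3)=O(N^{1-c})$ using the degree and $|\mathcal V|$ bounds from $\Xi_2$, then deduce both estimates by elementary counting (the paper expands $\mathbf e-\mathbf q$ coordinatewise where you use $2-2\langle\mathbf e,\mathbf q\rangle$, but the arithmetic is the same). One small remark: your opening sentence mentions using $L(\mathbb G)\mathbf 1_{[N]}=0$ and the local tree structure near $\mathcal V$, but neither is actually invoked in your argument (nor needed, nor available on $\Xi_2$ alone); you may want to drop that phrase to avoid confusion.
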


\begin{proof}[Proof of Proposition \ref{prop:trivialevect-1}]
On the event $\Xi_{2}$,
we have 
\[
\vert Y^{c}\vert\leq\Big\vert\bigcup_{x\in\mathcal{V}}B_{3}(x)\Big\vert\leq(\mathcal{C}d)^{3}\vert{\cal V\vert}\leq\mathcal{C}^{5}d^{4}N^{1-\frac{1}{1+t_{*}}}=O(N^{1-c}),
\]
for $c<\frac{1}{1+t_{*}}.$ In particular, $\vert Y\vert=(1-o(1))N.$
It is a straightforward computation to see that on $\Xi_{1}$
\[
\langle\mathbf{q},L\mathbf{q}\rangle=\frac{1}{\vert Y\vert}\sum_{y\in\bigcup_{x\in\mathcal{V}}S_{4}(x)}1\leq\frac{\mathcal{C}d\vert Y^{c}\vert}{\vert Y\vert}=O(N^{-c}),
\]
and
\[
\mathbf{e-\mathbf{q}}=\frac{1}{\sqrt{N}}\sum_{y\notin Y}1_{y}+\sum_{z\in Y}\Bigl(\frac{1}{\sqrt{N}}-\frac{1}{\sqrt{|Y|}}\Bigr)1_{z},
\]
and thus 
\[
\Vert\mathbf{e}-\mathbf{q}\Vert^{2}=\frac{|Y^{c}|}{N}+\frac{\bigl(\sqrt{|Y|}-\sqrt{N}\bigr)^{2}}{N}=O(N^{-c}).
\]
\end{proof}
\begin{prop}
\label{prop:Spec_Vc}On the event $\Xi_{2},$ $\mathrm{\mathrm{spec}(L(\mathbb{G})\vert_{\mathcal{V}^{c}})\subset\lbrace\mu\rbrace\cup[\tau d/2,}+\infty)$
for some $\mu=O(N^{-c})$, $0<c<\frac{1}{1+t_{*}}$.
\end{prop}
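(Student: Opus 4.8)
The plan is to split $\mathrm{spec}(L(\mathbb{G})|_{{\cal V}^{c}})$ into a single eigenvalue of size $O(N^{-c})$, exhibited by the almost-constant vector $\mathbf{q}$ of Proposition \ref{prop:trivialevect-1}, and a bulk bounded below by $\tau d/2$, which is forced by the large diagonal degrees of the vertices of ${\cal V}^{c}$ once the rank-one mean of the adjacency matrix is projected out. Concretely, set $M:=L(\mathbb{G})|_{{\cal V}^{c}}=D(\mathbb{G})|_{{\cal V}^{c}}-A(\mathbb{G})|_{{\cal V}^{c}}$. Since ${\cal V}^{c}=\{x\in[N]:D_{x}>\tau d\}$, every diagonal entry of $D(\mathbb{G})|_{{\cal V}^{c}}$ exceeds $\tau d$. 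Writing $\mathbf{1}=\sum_{x\in[N]}\mathrm{1}_{x}$, $\mathbf{1}_{{\cal V}^{c}}=\sum_{x\in{\cal V}^{c}}\mathrm{1}_{x}$ and using $\mathbb{E}(A(\mathbb{G}))=\frac{d}{N}(\mathbf{1}\mathbf{1}^{*}-I)$ together with the bound $\|A(\mathbb{G})-\mathbb{E}(A(\mathbb{G}))\|\leq{\cal C}\sqrt{d}$ from $\Xi_{2}$, one gets the decomposition
\[
M=D(\mathbb{G})|_{{\cal V}^{c}}-\frac{d}{N}\mathbf{1}_{{\cal V}^{c}}\mathbf{1}_{{\cal V}^{c}}^{*}+\frac{d}{N}I-R,\qquad\|R\|\leq{\cal C}\sqrt{d}.
\]

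For the upper bound on the smallest eigenvalue I would use $\mathbf{q}$ as a test vector: the point is that $Y=[N]\setminus\bigcup_{x\in{\cal V}}B_{3}(x)$ is contained in ${\cal V}^{c}$ (each $x\in{\cal V}$ lies in $B_{3}(x)$), so $\mathbf{q}$ is a unit vector supported in ${\cal V}^{c}$ whose zero-extension to $[N]$ is the original $\mathbf{q}$, and hence $\langle\mathbf{q},M\mathbf{q}\rangle=\langle\mathbf{q},L(\mathbb{G})\mathbf{q}\rangle=O(N^{-c})$ by Proposition \ref{prop:trivialevect-1}. Since $M$ is a principal submatrix of the positive semidefinite matrix $L(\mathbb{G})$, this gives $0\leq\lambda_{1}(M)=O(N^{-c})$; I set $\mu:=\lambda_{1}(M)$. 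For the lower bound on the next eigenvalue I would restrict the quadratic form to the hyperplane $v\perp\mathbf{1}_{{\cal V}^{c}}$, on which the rank-one term vanishes, so that for $\|v\|=1$ one has $\langle v,Mv\rangle\geq\tau d-{\cal C}\sqrt{d}\geq\tau d/2$ once $d$ is large; the Courant--Fischer min-max principle applied to the codimension-one subspace $\mathbf{1}_{{\cal V}^{c}}^{\perp}$ then yields $\lambda_{2}(M)\geq\tau d/2$. Together with the previous bound this gives $\mathrm{spec}(M)\subset\{\mu\}\cup[\tau d/2,+\infty)$ with $\mu=O(N^{-c})$.

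I do not expect a serious obstacle here: the content is two structural observations rather than a hard estimate. One must verify that $\mathbf{q}$ genuinely lives on ${\cal V}^{c}$ so that Proposition \ref{prop:trivialevect-1} applies verbatim, and one must resist bounding the mean matrix $\frac{d}{N}\mathbf{1}_{{\cal V}^{c}}\mathbf{1}_{{\cal V}^{c}}^{*}$ by its operator norm (which is of order $d$ and would destroy the lower bound) and instead project it away -- precisely what the restriction to $\mathbf{1}_{{\cal V}^{c}}^{\perp}$ in the min-max step achieves. No probabilistic input beyond $\Xi_{2}$ is needed.
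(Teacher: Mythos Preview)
Your argument is correct and follows essentially the same route as the paper: decompose $L(\mathbb{G})|_{{\cal V}^{c}}$ into the large diagonal $D|_{{\cal V}^{c}}$, the rank-one mean $\frac{d}{N}\mathbf{1}_{{\cal V}^{c}}\mathbf{1}_{{\cal V}^{c}}^{*}$, and an error of norm $O(\sqrt{d})$, then use interlacing (the paper phrases it as Weyl interlacing under a rank-one perturbation, you phrase it as Courant--Fischer on $\mathbf{1}_{{\cal V}^{c}}^{\perp}$) to get $\lambda_{2}\geq\tau d/2$, and use $\mathbf{q}$ to pin down $\lambda_{1}$. The only difference is that for the bound $\lambda_{1}=O(N^{-c})$ you take the direct Rayleigh-quotient route $0\leq\lambda_{1}(M)\leq\langle\mathbf{q},M\mathbf{q}\rangle=\langle\mathbf{q},L(\mathbb{G})\mathbf{q}\rangle$, whereas the paper invokes the perturbation Lemma~\ref{lem:PerturbationLemma}; your version is slightly cleaner here since it needs only the quadratic-form bound from Proposition~\ref{prop:trivialevect-1} and the positivity of $L(\mathbb{G})$.
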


\begin{proof}[Proof of Proposition \ref{prop:Spec_Vc}]
We have 
\[
L(\mathbb{G})=D(\mathbb{G})-(A(\mathbb{G})-\mathbb{E}(A(\mathbb{G}))-\mathbb{E}(A(\mathbb{G})).
\]
By definition of $D$ and ${\cal V}$ we have $\lambda_{1}(D(\mathbb{G})|_{{\cal V}})\geq\tau d$
and with Definition \ref{def:Xi} we have that $\Vert A(\mathbb{G})-\mathbb{E}(A(\mathbb{G})\Vert=O(\sqrt{d})$.
Therefore,
\[
\lambda_{1}\bigl((D(\mathbb{G})-(A(\mathbb{G})-\mathbb{E}(A(\mathbb{G})))|_{{\cal V}^{c}}\bigr)\geq\tau d-O(\sqrt{d}).
\]
Because $\mathbb{E}(A(\mathbb{G}))|_{{\cal V}^{c}}=\frac{d}{N}1_{{\cal V}^{c}}1_{{\cal V}^{c}}^{*}$
is a rank one matrix, by interlacing $\lambda_{2}(L(\mathbb{G})|_{{\cal V}^{c}})\geq\tau d-O(\sqrt{d})$.
Moreover using Proposition \ref{prop:trivialevect-1} and Lemma \ref{lem:PerturbationLemma},
we have $\lambda_{1}(L(\mathbb{G})|_{{\cal V}^{c}})=O(N^{-c})$ for
any constant $c>0$ small enough.
\end{proof}
We now have all the ingredients to prove Proposition \ref{prop:blockdiagonal}.

\begin{proof}[Proof of Proposition \ref{prop:blockdiagonal}]
 We work on $\Xi_{1}\cap\Xi_{2}$ that is a high probability event
by Proposition \ref{prop:Xi}.

We denote ${\cal U}=\cup_{x\in{\cal V}}B_{2}(x)=\bigcup_{U\in\mathfrak{U}}U$
and recall the definition of $\mathfrak{U}$ and $\mathbf{q}$ from
Definition \ref{def:U-connected-components} and Proposition \ref{prop:trivialevect-1}
respectively. We define three vector spaces $E_{1}=\text{Span}(\boldsymbol{q})$,
$E_{2}=\text{Span}(\{\boldsymbol{1}_{y},y\in{\cal U}\})$ and $E_{3}=(E_{1}+E_{2})^{\perp}$.
Let $(v_{i},\lambda_{i})_{i\le|{\cal U}|}$ the eigenvectors-eigenvalues
of $L(\mathbb{G})|_{{\cal U}}$ that form an orthonormal basis of
$E_{2}$ that decompose into $E_{2}=E_{2}^{\leq}+E_{2}^{>}$ where
$E_{2}^{\leq}=\text{Span}(\{v_{i}\colon\lambda_{i}\leq\frac{1}{2}\tau d\})$
and $E_{2}^{>}=\text{Span}(\{v_{i}\colon\lambda_{i}>\frac{1}{2}\tau d\})$.
We complete $\{\boldsymbol{q},v_{1},\cdots,v_{|{\cal U}|}\}$ into
an orthogonal basis for $\mathbb{R}^{[N]}=E_{1}+E_{2}^{\leq}+E_{2}^{>}+E_{3}$
to obtain an orthogonal matrix $V$. The Laplacian of the graph in
this basis is in the form of a block matrix 
\[
V^{*}L(\mathbb{G})V=\begin{pmatrix}\nu & 0 & 0 & X_{\nu}^{*}\\
0 & D^{\le  q} & 0 & X^{\leq}\\
0 & 0 & D^{>} & X^{>}\\
X_{\nu} & X^{\leq} & X^{>} & Y
\end{pmatrix}
\]

We explain every element of this matrix :
\begin{itemize}
\item $\nu=\langle\mathbf{q,}L(\mathbb{G})\mathbf{q}\rangle=O(N^{-c})$
by Proposition \ref{prop:trivialevect-1}, for $c<\frac{1}{1+t_{*}}$.
\item Because $\text{supp}(\boldsymbol{q})\subseteq(\bigcup_{x\in\mathcal{V}_{2}}B_{3}(x))^{c}$
we have $\text{supp}(L(\mathbb{G})\mathbf{q})\subseteq(\bigcup_{x\in\mathcal{V}_{2}}B_{2}(x))^{c}$
and then $L(\mathbb{G})\mathbf{q}\in E_{2}^{\perp}$, so we have the
two zeros on the first line and column.
\item $X_{\nu}$ is a 1-column matrix that can be identified as a vector
such that $L(\mathbb{G})\mathbf{q}=\nu\mathbf{q}+X_{\nu}$. We have
\[
\|X_{\nu}\|\leq\|L(\mathbb{G})\mathbf{q}\|=\|L(\mathbb{G})(\mathbf{q}-\boldsymbol{e})\|\leq\|L(\mathbb{G})\|\|(\mathbf{q}-\boldsymbol{e})\|=O(N^{-c+o(1)})
\]
where we use that $L(\mathbb{G})\boldsymbol{e}=0$ and Propsition
\ref{prop:trivialevect-1}.
\item $D^{\leq}$ and $D^{>}$ are diagonal matrices with entries $\{\lambda_{i}\colon\lambda_{i}\leq\frac{1}{2}\tau d\}$
and $\{\lambda_{i}\colon\lambda_{i}>\frac{1}{2}\tau d\}$. This directly
follows from the choice of $v_{i}$ as the eigenvectors $L(\mathbb{G})|_{{\cal U}}$.
\item $X^{\leq}$ and $X^{>}$ describe the adjancy matrix$-A(\mathbb{G})$
between the set ${\cal U}$ and ${\cal U}^{c}$ that is a tree by
definition of $\Xi_{1}$ and we have $\max\{\|X^{\leq}\|,\|X^{>}\|\}\leq2\sqrt{{\cal C}d}$
by Lemma \ref{lem:adj_of_tree}.
\item $Y=L(\mathbb{G})|_{E_{3}}$. We have $E_{1}+E_{3}=E_{2}^{\perp}=\text{Span}(1_{x},x\in{\cal U}^{c})$.
Then if we remove the block matrices associated to the space $E_{2}$
we obtain 
\[
V^{*}L(\mathbb{G})|_{{\cal U}^{c}}V=\begin{pmatrix}\nu & X_{\nu}^{*}\\
X_{\nu} & Y
\end{pmatrix}=\begin{pmatrix}\nu & 0\\
0 & Y
\end{pmatrix}+\begin{pmatrix}0 & X_{\nu}^{*}\\
X_{\nu} & 0
\end{pmatrix},
\]
where we write $V$ instead of $V|_{{\cal U}^{c}}$and simplify the
notation by omitting the zero blocks. We immediately conclude that
\[
\lambda_{2}(L(\mathbb{G})|_{{\cal U}^{c}})\geq\max\{\nu,\lambda_{1}(Y)\}-\|X_{\nu}\|.
\]
We have Proposition \ref{prop:Spec_Vc} and then use that $\text{Span}(1_{x},x\in{\cal U}^{c})\subset\text{Span}(1_{x},x\in{\cal V}^{c})$
so by interlacing we obtain 
\[
\frac{1}{2}\tau d\leq\lambda_{2}(L(\mathbb{G})|_{{\cal V}^{c}})\leq\lambda_{2}(L(\mathbb{G})|_{{\cal U}^{c}}).
\]
Because of the previous estimate on $\nu$ and $\|X_{\nu}\|$ we finally
conclude that 
\[
\lambda_{1}(Y)\geq\frac{1}{2}\tau d-o(1).
\]
\item Finally we improve the bound for $\|X^{\leq}\|$. Let $u_{2}\in E_{2}^{\leq},u_{3}\in E_{3}$,
we have 
\[
|\langle L(\mathbb{G})u_{3},u_{2}\rangle|=|\langle A(\mathbb{G})u_{3},u_{2}|_{\partial{\cal U}}\rangle|\leq2\sqrt{{\cal C}d}\|u_{3}\|\|u_{2}|_{\partial{\cal U}}\|
\]
and write $u_{2}=\sum_{i}\alpha_{i}v_{i}$ for some $\alpha_{i}\in\mathbb{R}$.
We have 
\begin{align*}
\|u_{2}|_{\partial{\cal U}}\|^{2} & =\sum_{U\in\mathfrak{U}}\|u_{2}|_{\partial U}\|^{2}\\
 & =\sum_{U\in\mathfrak{U}}\|\sum_{\text{Supp}(v_{i})\subset U}\alpha_{i}v_{i}|_{\partial U}\|^{2}\\
 & \leq\sum_{U\in\mathfrak{U}}t_{*}\sum_{\text{Supp}(v_{i})\subset U}\alpha_{i}^{2}\|v_{i}|_{\partial U}\|^{2}\\
 & \leq Ct_{*}\sum_{U\in\mathfrak{U}}\sum_{\text{Supp}(v_{i})\subset U}\alpha_{i}^{2}\frac{\|v_{i}\|^{2}}{d^{2}}\\
 & =Ct_{*}\frac{\|u_{2}\|^{2}}{d^{2}}
\end{align*}
for some fix $C>0$ by Proposition \ref{prop:Exponential_decay} and
therefore 
\[
\|X^{\leq}\|=\sup_{u_{2}\in E_{2}^{\leq},u_{3}\in E_{3}}\frac{\langle u_{3},L(\mathbb{G})u_{2}\rangle}{\|u_{3}\|\|u_{2}\|}=O\Bigl(\frac{1}{\sqrt{d}}\Bigr).
\]
\end{itemize}
We now finish the proof of Proposition \ref{prop:blockdiagonal} we
have 
\[
V^{*}L(\mathbb{G})V=\begin{pmatrix}\nu & 0 & 0 & 0\\
0 & D^{\leq} & 0 & 0\\
0 & 0 & D^{>} & X^{>}\\
0 & 0 & X^{>} & Y
\end{pmatrix}+\begin{pmatrix}0 & 0 & 0 & X_{\nu}^{*}\\
0 & 0 & 0 & X^{\leq}\\
0 & 0 & 0 & 0\\
X_{\nu} & X^{\leq} & 0 & 0
\end{pmatrix}
\]
therefore 
\begin{equation}
\sum_{\lambda\in\text{Spec}(L(\mathbb{G}))}\delta_{\lambda}\quad\text{and}\quad\delta_{\nu+\delta\nu}+\sum_{\mu\in\text{Spec}(D^{\leq})}\delta_{\mu+\epsilon_{\mu}}\label{eq:agree_1}
\end{equation}
agrees on $[0,\tau d/2-o(1)]$ with $\delta\nu\leq\|X_{\nu}\|=O(N^{-c})$
and
\begin{align*}
\epsilon_{\mu} & \leq\min\{\|X^{\leq}\|,\frac{5\|X^{\leq}\|^{2}}{|\frac{1}{2}\tau d-\mu|}\}\leq\begin{cases}
\frac{1}{\sqrt{d}} & if\:\mu\geq\tau d/4,\\
\frac{20}{\tau d^{2}} & if\:\mu\leq\tau d/4
\end{cases}=O((1+\mu)d^{-1}),
\end{align*}
where we used Lemma \ref{lem:PerturbationLemma}.\textcolor{blue}{{}
}We now use Proposition \ref{prop:Tree_to_Ball} 
\begin{equation}
\sum_{\mu\in\text{Spec}(D^{\leq})}\delta_{\mu+\epsilon_{\mu}}=\sum_{U\in\mathfrak{U}}\sum_{\mu\in\text{Spec}(L(\mathbb{G})|_{U})}\delta_{\mu+\epsilon_{\mu}}=\sum_{F\in\mathfrak{F}}\sum_{\mu\in\text{Spec}(L(F;\widehat{\partial F}))}\delta_{\mu+\epsilon_{\mu}+\epsilon_{\mu}'}\label{eq:agree_2}
\end{equation}
with $\epsilon_{\mu}'=O\bigl((\mu+1)d^{-1}\bigr)$. Combining \eqref{eq:agree_1}
and \eqref{eq:agree_2} gives the right-hand side term of Proposition
\ref{prop:blockdiagonal}. Finally because the multiplicity of the
eigenvalue $\{0\}$ of $L(\mathbb{G})$ is equal to the number of
connected component in the graph, we actually have that $\delta_{\nu+\delta\nu}=\delta_{0}$
(which can be seen as the eigenvalue associated with the giant connected
component).
\end{proof}

\section{Spectrum of weighted Laplacian on trees , Proof of Proposition \ref{prop:minimality}.}

\label{sec:Spectrum-of-Laplacian}In this section, we prove that the
line is the optimal graph to minimize the smallest non-zero eigenvalue
and then compute explicitly its spectrum.
\begin{prop}
\label{prop:LineMinimality}For any tree $T$ of size $|T|\leq t$
and $\widehat{X}\subset[[T]]$ we have 
\begin{equation}
\lambda_{1}(L(\mathbb{L}_{t},\{1\}))\leq\min(\text{Spec}(L(T,\widehat{X}))\setminus\{0\}).\label{eq:LineMin-1}
\end{equation}
\end{prop}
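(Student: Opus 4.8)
The plan is to prove \eqref{eq:LineMin-1} by a two-step reduction: first I would reduce an arbitrary tree $T$ with a weight multiset $\widehat{X}$ to a \emph{single} weight of multiplicity one placed at one vertex, showing the smallest positive eigenvalue only decreases; then I would reduce that configuration further to a line with the weight at an endpoint. For the first step, note that $L(T,\widehat X) = L(T) + \sum_{x} \widehat X(x)\,\mathrm{1}_x\mathrm{1}_x^*$ is obtained from $L(T)$ by adding positive semidefinite rank-one terms, so by monotonicity of eigenvalues under positive perturbation, removing some of these terms (or reducing multiplicities to $1$) can only decrease the eigenvalues; hence it suffices to bound $\min(\mathrm{Spec}(L(T,\{x_0\}))\setminus\{0\})$ from below, where $x_0$ is a single vertex — and here one must be slightly careful, because removing all weights gives $L(T)$ which has a zero eigenvalue, so one keeps exactly one weight at a well-chosen vertex. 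The eigenvalue $\min(\mathrm{Spec}(L(T,\{x_0\}))\setminus\{0\})$ is in fact $\lambda_1(L(T,\{x_0\}))$ since $L(T,\{x_0\})$ is positive definite (the weight kills the constant vector, and $T$ connected forces strict positivity).

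For the core comparison, I would use the variational (Rayleigh quotient) characterization $\lambda_1(L(T,\{x_0\})) = \min_{v\neq 0} \frac{\langle v, L(T)v\rangle + v(x_0)^2}{\|v\|^2}$, together with a ``straightening'' argument: given $T$ rooted at $x_0$, pick a longest path from $x_0$, which has length $\ell \le |T|-1 \le t-1$, so it realizes a line $\mathbb{L}_{\ell'}$ with $\ell' \le t$ sitting inside, weighted at one endpoint. One then shows $\lambda_1(L(\mathbb{L}_t,\{1\})) \le \lambda_1(L(\mathbb{L}_{\ell'},\{1\})) \le \lambda_1(L(T,\{x_0\}))$. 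The first inequality is monotonicity of $\lambda_1(L(\mathbb{L}_s,\{1\}))$ in $s$ — longer lines give smaller spectral gap, which can be checked either from the explicit formula $2-2\cos(\pi/(2s+1))$ (anticipating the computation done later in the section, or proved directly here) or by a test-vector argument embedding $\mathbb{L}_{\ell'}$ into $\mathbb{L}_t$ and extending the ground-state vector by its boundary value. The second inequality is the heart of the matter: I would take the ground state $v$ of $L(T,\{x_0\})$, which one can show is nonnegative and monotone non-increasing along every path away from $x_0$ (a Perron–Frobenius / rearrangement type statement for the weighted Laplacian on a tree), and then ``fold'' $T$ onto the line by summing, for each distance $k$ from $x_0$, the contributions at level $k$ in a way that does not increase the Dirichlet form relative to the $\ell^2$ norm. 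Concretely, define $w(k) = $ an $\ell^2$-weighted average of $v$ over $S_k(x_0)\cap T$; then $\langle w, L(\mathbb{L}_{\ell'}) w\rangle \le \langle v, L(T) v\rangle$ because each edge of $T$ between levels $k$ and $k+1$ contributes $(v(\text{child})-v(\text{parent}))^2$ and a convexity/Cauchy–Schwarz inequality bounds the line's telescoping differences by the tree's edge differences, while $\|w\|^2 = \|v\|^2$ and $w(0)^2 = v(x_0)^2$, so the Rayleigh quotient does not increase.

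The main obstacle I anticipate is making the folding step rigorous: one needs the right averaging scheme so that simultaneously (i) the new vector has the same norm, (ii) its value at the root is unchanged (or at least not larger relative to the norm), and (iii) the sum of squared increments along the line is dominated by the sum of squared increments along the edges of $T$. The natural candidate $w(k)^2 = \sum_{y\in S_k(x_0)} v(y)^2$ with an appropriate sign handles (i) and (ii), but (iii) requires controlling how mass at level $k$ splits among children at level $k+1$; the clean way is to first establish that the ground state is nonnegative and radially monotone, then bound $(w(k)-w(k+1))^2$ using that $w(k) - w(k+1) = \|v|_{S_k}\| - \|v|_{S_{k+1}}\|$ and relating this to a single edge difference by throwing away all but the ``heaviest'' branch — this is where one uses that a tree has a path, and where the inequality becomes an equality precisely when $T$ is already a line and $v$ is supported with the extremal structure of $\mathbb{L}_t$, which is exactly the equality case needed for Proposition \ref{prop:minimality}.
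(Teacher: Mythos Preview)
Your first reduction (to a single weight of multiplicity one, and to $|T|=t$ via interlacing) is correct and matches the paper. The gap is in the ``folding'' step, and it is genuine: the intermediate inequality $\lambda_1(L(\mathbb{L}_{\ell'},\{1\}))\le\lambda_1(L(T,\{x_0\}))$, where $\ell'$ is one plus the eccentricity of $x_0$ in $T$, is simply false. Take $T$ the star with center $c$ and $n\ge3$ leaves, weighted at $x_0=c$. The longest path from $c$ has length $1$, so $\ell'=2$, but
\[
\lambda_1\bigl(L(\mathbb{L}_2,\{1\})\bigr)=\frac{3-\sqrt5}{2}\approx0.382
\quad>\quad
\lambda_1\bigl(L(T,\{c\})\bigr)=\frac{(n+2)-\sqrt{n^2+4n}}{2}\sim\frac1n.
\]
Your route through $\mathbb{L}_{\ell'}$ therefore cannot work: the comparison line must have $t\ge|T|$ vertices, and folding by radial level forgets precisely this. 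The concrete scheme $w(k)=\|v|_{S_k(x_0)}\|$ also fails item~(iii) directly: on the star, the ground state has $v(c)\approx v(\text{leaf})=b$, so the line Dirichlet energy is $(b-\sqrt{n}\,b)^2\sim nb^2$ while the tree Dirichlet energy is $n(v(c)-b)^2\sim b^2/n$. The alternative you sketch (``throw away all but the heaviest branch'') destroys the norm equality you need.

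The paper avoids this by a dual argument that you are missing: rather than push the ground state of $T$ onto a line, it parametrizes $u$ by its root value $u(i)$ and its edge increments $q(e)=u(x)-u(y)$, so that the \emph{numerator} $\sum_e q(e)^2+u(i)^2$ of the Rayleigh quotient is fixed, and then \emph{maximizes the denominator} $\|u\|^2$ over all trees of size $t$ and root placements. Since $u(x)=u(i)+\sum_{e\in P_{i\to x}}q(e)$, sorting the edges by $|q(e_{(1)})|\ge\cdots\ge|q(e_{(t-1)})|$ and the vertices by distance to $i$ gives $|u(x_k)|\le|u(i)|+\sum_{l\le k}|q(e_{(l)})|$, with equality exactly when $T$ is a line rooted at an endpoint with the edges laid in this order. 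This rearrangement automatically uses all $t-1$ edge weights, so the extremal line has the right length $t$ --- which is exactly what your level-folding cannot deliver.
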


Note that since the number of trees of size $t$ is finite and since
increasing the size of a multiset is a positive rank-one perturbation
of the corresponding deformed Laplacian matrix, Proposition \ref{prop:LineMinimality}
tells that for any $t\in\mathbb{N}$ there exists a universal constant
$c$>0 such that $\lambda_{1}(L(\mathbb{L}_{t},\{1\}))\leq\min(\text{Spec}(L(T,\widehat{X}))\setminus\{0\})$-c,
for every tree $T$ of size $t$ and any multiset on $[[T]].$
\begin{lem}
\label{lem:LineSpectrum}The eigenvalues $L(\mathbb{L}_{t},\{1\})$
are $\lambda_{k}=2-2\cos\left(\frac{\pi}{2t+1}+\frac{2k\pi}{2t+1}\right)$
for $0\leq k<t$.
\end{lem}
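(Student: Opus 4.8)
The plan is to identify $L(\mathbb{L}_t,\{1\})$ explicitly as a tridiagonal matrix and diagonalize it by a Fourier ansatz, treating the weight at vertex $1$ and the free boundary at vertex $t$ as boundary conditions on a solution of a second-order recurrence. Concretely, $L(\mathbb{L}_t,\{1\})$ is the $t\times t$ matrix with $-1$ on the off-diagonals, diagonal $(2,2,\dots,2,1)$ coming from the degrees $(1,2,\dots,2,1)$ of the line plus the extra $+1$ at vertex $1$. So the eigenvalue equation $L(\mathbb{L}_t,\{1\})u=\lambda u$ for $\lambda=2-2\cos\theta$ reads, for an interior vertex $2\le i\le t-1$, $-u_{i-1}+2u_i-u_{i+1}=(2-2\cos\theta)u_i$, i.e. the recurrence $u_{i+1}-2\cos\theta\, u_i+u_{i-1}=0$, whose general solution is a linear combination of $\cos(i\theta)$ and $\sin(i\theta)$.

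Next I would impose the two boundary equations. The equation at vertex $t$ (free end, degree $1$): $-u_{t-1}+u_t=(2-2\cos\theta)u_t$, which rearranges to $u_{t-1}-2\cos\theta\,u_t+u_{t+1}=0$ \emph{provided} we set the phantom value $u_{t+1}:=u_t$; equivalently the natural parametrization is $u_i=\cos\bigl((t+\tfrac12-i)\theta\bigr)$ up to normalization, since then $u_{t+1}=\cos(-\tfrac12\theta)=\cos(\tfrac12\theta)=u_t$ automatically, so the right boundary is satisfied for every $\theta$. It remains to use the equation at vertex $1$ (the weighted end, effective diagonal entry $2$ not $1$): $2u_1-u_2=(2-2\cos\theta)u_1$, which rearranges to $u_0-2\cos\theta\,u_1+u_2=0$ with the phantom value $u_0:=0$. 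With the ansatz $u_i=\cos\bigl((t+\tfrac12-i)\theta\bigr)$ this forces $\cos\bigl((t+\tfrac12)\theta\bigr)=0$, i.e. $(t+\tfrac12)\theta\in\tfrac{\pi}{2}+\pi\mathbb{Z}$, that is $\theta=\dfrac{\pi}{2t+1}+\dfrac{2k\pi}{2t+1}$.

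Finally I would check that $k=0,1,\dots,t-1$ gives exactly $t$ distinct eigenvectors. For these $t$ values of $\theta$, the quantities $\lambda_k=2-2\cos\theta$ are distinct: $\theta$ ranges over $t$ values in $(0,\pi)$ (namely $\tfrac{(2k+1)\pi}{2t+1}$ for $0\le k\le t-1$, all lying strictly between $0$ and $\pi$ since $2t-1<2t+1$), and $2-2\cos$ is strictly increasing on $[0,\pi]$, so the $\lambda_k$ are $t$ distinct real numbers. Since a $t\times t$ symmetric matrix has exactly $t$ eigenvalues counted with multiplicity, and the nonzero vectors $u^{(k)}_i=\cos\bigl((t+\tfrac12-i)\theta_k\bigr)$ are genuine eigenvectors (one checks $u^{(k)}$ is not identically zero because otherwise all $\cos$ values vanish, impossible for $t\ge1$), this list is the complete spectrum. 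The only mildly delicate point is bookkeeping the half-integer shift in the cosine ansatz so that \emph{both} boundary conditions — the free end and the weighted end — come out as the clean conditions $u_{t+1}=u_t$ and $u_0=0$; once the right ansatz $u_i=\cos\bigl((t+\tfrac12-i)\theta\bigr)$ is in hand, the rest is routine trigonometry.
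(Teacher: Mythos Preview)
Your proof is correct and follows essentially the same route as the paper's: write $\lambda=2-2\cos\theta$, identify the eigenvalue equation as the second-order recurrence $u_{i+1}-2\cos\theta\,u_i+u_{i-1}=0$ together with the two boundary conditions $u_0=0$ and $u_{t+1}=u_t$, and read off the admissible frequencies $\theta=\tfrac{(2k+1)\pi}{2t+1}$. The only cosmetic difference is which boundary condition you bake into the ansatz: the paper takes $u_k=\sin(k\alpha)$ (so $u_0=0$ is automatic) and then solves $\sin((t+1)\alpha)=\sin(t\alpha)$, while you take $u_i=\cos\bigl((t+\tfrac12-i)\theta\bigr)$ (so $u_{t+1}=u_t$ is automatic) and then impose $u_0=0$; the two ansatze are in fact the same eigenvector up to sign once $\theta$ is fixed. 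Your version has the small bonus of explicitly checking that the $t$ values of $\theta$ lie in $(0,\pi)$ and hence give $t$ distinct eigenvalues, a count the paper leaves implicit.
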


\begin{proof}[Proof of Proposition \ref{prop:minimality}]
This directly follows from Proposition \ref{prop:LineMinimality}
and Lemma \ref{lem:LineSpectrum} with $t=t_{*}$.
\end{proof}
\begin{proof}[Proof of Proposition \ref{prop:LineMinimality}]
 We are interesting in the minimiser of $\min(\text{Spec}(L(T,\widehat{X}))\setminus\{0\})$
among all the tree $T$ of size $|T|\leq t$. For any $i\in T$, $\mathrm{1}_{i}\mathrm{1}_{i}^{*}$
is positive operator therefore we have that for all multiset $\widehat{X}\subset\widehat{Y}\subset[[T]]$,
$L(T,\widehat{X})\leq L(T,\widehat{Y})$ (as operator) and then 
\[
\lambda_{1}\bigl(L\bigl(T,\widehat{X}\bigr)\bigr)\leq\lambda_{1}\bigl(L\bigl(T,\widehat{Y}\bigr)\bigr)
\]
In the case $\widehat{X}=\emptyset$ we have $\lambda_{1}(L(T,\emptyset))=\lambda_{1}(L(T))=0$
and by interlacing we have 
\[
0=\lambda_{1}(L(T,\emptyset))<\lambda_{1}\bigl(L\bigl(T,\{i\}\bigr)\bigr)<\lambda_{2}\bigl(L\bigl(T,\emptyset\bigr)\bigr).
\]
If $|T|<t$ there exists a large graph $T'$ with $|T'|=t$ such that
$T\subset T'$ and again by interlacing we have 
\[
0<\lambda_{1}\bigl(L\bigl(T',\{i\}\bigr)\bigr)<\lambda_{1}\bigl(L\bigl(T,\{i\}\bigr)\bigr)
\]
Therefore the minimiser is of the form $\lambda_{1}(L(T,\{i\}))$
with $|T|=t$.

We think of $i$ as the root of $T$ and for every edge $e=(x,y)$
of $T$ we define the weight 
\[
q(e)=u(x)-u(y)
\]
and denote $\boldsymbol{q}=(q_{e})_{e\in E}\in\mathbb{R}^{t-1}.$
We have

\begin{equation}
\lambda_{1}(L(T,\{i\}))=\inf_{u\in\mathbb{R}^{t}\setminus\{0\}}\frac{\langle u,L(T,\{i\})u\rangle}{\|u\|^{2}}=\inf_{u\in\mathbb{R}^{t}\setminus\{0\}}\frac{\sum_{e\in E}q(e)^{2}+|u(i)|^{2}}{\|u\|^{2}}.\label{eq:Gamma}
\end{equation}
Let us now change to a dual approach. We have a bijection between
$\{u(i),\boldsymbol{q}\}$ and $(u(x))_{x\in T}$ given by the equation
\begin{equation}
u(x)=u(i)+\sum_{e\in P_{i\rightarrow x}(T)}q(e)\label{eq:BijectionDual}
\end{equation}
where we denoted by $P_{i\rightarrow x}(T)$ the unique path from
$i$ to $x$ in $T.$ We denote $u=\phi(T,i,u(i),\boldsymbol{q})$
the bijection of Equation \eqref{eq:BijectionDual}. Then we have
\begin{align*}
\min_{T,i}\lambda_{1}(L(T,\{i\})) & =\inf_{u(i),\boldsymbol{q}}\min_{T,i}\frac{\sum_{e}|q(e)|^{2}+|u(i)|^{2}}{\|\phi(T,i,u(i),\boldsymbol{q})\|^{2}}=\inf_{u(i),\boldsymbol{q}}\frac{\sum_{e}|q(e)|^{2}+|u(i)|^{2}}{\max_{T,i}\|\phi(T,i,u(i),\boldsymbol{q})\|^{2}}.
\end{align*}
We now show that for fixed $u(i),\boldsymbol{q}$ the norm $\|\phi(T,i,u(i),\boldsymbol{q})\|^{2}$
is maximized when $T$ is the line and $i$ an extremal point.

We sort the edges decreasingly according to their weight 
\begin{align*}
|q(e_{1})|\geq|q(e_{2})|\geq\ldots\geq|q(e_{t-1})|\geq0
\end{align*}
and the vertices increasingly according to their graph distance to
the root $i=x_{0}$ 
\[
0\leq\text{dist}(i,x_{1})\leq\text{dist}(i,x_{2})\leq\cdots\leq\text{dist}(i,x_{t-1}).
\]
We have 
\[
    \bigg\vert\sum_{e\in P_{i\rightarrow x_{k}}}q(e)\bigg\vert\leq\sum_{e\in P_{i\rightarrow x_{k}}}|q(e)|\leq\sum_{l=1}^{|P(i,x_{k})|}|q(e_{i})|\leq\sum_{l=1}^{k}|q(e_{i})|,
\]
where we use that the length of the path $P(i,x_{k})$ is at most
$k$. Therefore 
\begin{align}
\|\phi(T,i,u(i),\boldsymbol{q})\|^{2} & =\sum_{k=0}^{t-1}|u(i)+\sum_{e\in P_{i\rightarrow x_{k}}}q(e)|^{2}\leq\sum_{k=0}^{t-1}\left||u(i)|+\sum_{l=1}^{k}|q(e_{i})|\right|^{2}\label{eq:boundual}
\end{align}
Observe that the right-hand side can be reached if $T$ is a line and
$i$ is an extremal point. Moreover this solution is unique if $q(e_{i})>0$
for all $i\leq t-1$. Then we have 
\[
\max_{T,i}\|\phi(T,i,u(i),\boldsymbol{q},T)\|^{2}=\|\phi(\mathbb{L}_{t},1,u(i),\boldsymbol{q})\|^{2}
\]
and finally conclude that $\min_{T,i}\lambda_{1}(L(T,\{i\}))=\lambda_{1}(L(\mathbb{L}_{t},\{1\}))$.

Finally, we now prove that we can assume $|q(e_{t-1})|>0$. If $q(e_{t-1})=0$
then 
\[
\frac{d}{dq(e_{t-1})}\left(\sum_{e}|q(e)|^{2}+|u(i)|^{2}\right)=0,\quad\frac{d}{dq(e_{t-1})}\sum_{k=0}^{t-1}\left||u(i)|+\sum_{l=1}^{k}|q(e_{i})|\right|^{2}\neq0,
\]
therefore $\boldsymbol{q}$ is not the minimser of 
\[
\inf_{u(i),\boldsymbol{q}}\frac{\sum_{e}|q(e)|^{2}+|u(i)|^{2}}{\|\phi(\mathbb{L}_{t},1,u(i),\boldsymbol{q})\|^{2}}.
\]
\end{proof}
\begin{proof}
[Proof of Lemma \ref{lem:LineSpectrum}]We write the matrix 
\[
L(\mathbb{L}_{t},\{1\})=\begin{pmatrix}2 & -1 & 0 & \cdots & 0 & 0\\
-1 & 2 & -1 & 0 &  & 0\\
0 & -1 & \ddots & \ddots &  & \vdots\\
\vdots & 0 & \ddots & \ddots & \ddots & 0\\
0 &  &  & \ddots & 2 & -1\\
0 & 0 & \cdots & 0 & -1 & 1
\end{pmatrix}\in\mathbb{R}^{t\times t}.
\]
Let $\lambda$ an eigenvalue and and $u\in\mathbb{R}^{t+1}$ the associated
eigenvector with the convention $u_{0}=0$. We have 
\[
Lu=\lambda u\Leftrightarrow\begin{cases}
u_{k+1}+u_{k-1}=(2-\lambda)u_{k} & \text{for }1\leq k<t\\
u_{t-1}=(1-\lambda)u_{t}
\end{cases}
\]
Therefore for any $1\leq k<t$ 
\[
\begin{pmatrix}u_{k+1}\\
u_{k}
\end{pmatrix}=P\begin{pmatrix}u_{k}\\
u_{k-1}
\end{pmatrix}=P^{k}\begin{pmatrix}u_{1}\\
0
\end{pmatrix}\quad\text{with}\quad P=\begin{pmatrix}(2-\lambda) & -1\\
1 & 0
\end{pmatrix}.
\]
Let $\alpha\in[0,\frac{\pi}{2}]$ such that $2-\lambda=2\cos\alpha$
then the eigenvalues of $P$ are solution of $\gamma^{2}-2\cos(\alpha)\gamma+1=0$
which are 
\[
\gamma_{\pm}=\frac{2\cos(\alpha)\pm i\sqrt{4-4\cos(\alpha)^{2}}}{2}=\cos\alpha\pm i\sin\alpha=e^{\pm i\alpha}.
\]
Then $u_{k}=ae^{ik\alpha}+be^{-ik\alpha}$ for some $a,b\in\mathbb{C}$
and with $u_{0}=0$ we conclude that $u_{k}=\sin(k\alpha)$ up to
a multiplicative factor. Finally, the last equation becomes 
\begin{align*}
(2\cos\alpha-1)\sin(t\alpha) & =\sin((t-1)\alpha)
\end{align*}
and using the recursive algorithm for finding the $n$th multiple
angle, we see that 
\[
\sin((t+1)\alpha)=2\cos(\alpha)\sin(t\alpha)-\sin((t-1)\alpha)=\sin(t\alpha).
\]
We conclude that $\sin((t+1)\alpha)=\sin(t\alpha)$ and thus 
\[
\alpha=\frac{\pi}{2t+1}\mod\Bigl(\frac{2\pi}{2t+1}\Bigr).
\]
\end{proof}

\section{Outlook}

\subsection{Fluctuations and eigenvector localization}

With Theorem \ref{thm:1} we understand the distribution of the smallest
eigenvalues of $L$ but at a deterministic level. A natural development
would be to improve such a prediction to obtain the fluctuation scale
and its asymptotic law. Also Theorem \ref{thm:1} does not give
any information about the corresponding eigenvectors but the proof
hints at the fact that there exists a narrow relation between the
smallest eigenvalues of $L$ and some specific geometric objects that
can be found in $G$, namely trees of a particular shape. We believe
that this relation can be made into a rigorous mathematical result
by showing that the eigenvectors corresponding to the smallest eigenvalues
are localized on the lines of maximal size. The work to get there
is non-trivial but a very similar result has been done in previous
work (\cite[Theorem 1.7]{alt2021extremal} as well as in \cite{rivier2023thesis}. Inspired by these works we expect the following to hold
:
\begin{conjecture}
There is an interval $[a,b]$, with $a,b$ close to $2-2\cos\left(\frac{\pi}{2t_{*}+1}\right)$
such that
\begin{enumerate}
\item $\lambda_{2}\in[a,b]$ with high probabily.
\item (Fluctuations) $\sum_{\lambda\in\text{Spec}(L(\mathbb{G}))}\delta_{\lambda}$
converge to a Poisson Point Process on $[a,b]$.
\item (Localization) For any $\lambda\in[a,b]$ and $v$ the corresponding
eigenvector, there exists a tree $T\subset\mathbb{G}$ that is isomorphic
to line of size $t_{*}$ and such that $\|v|_{T}\|=1-o(1)$
\end{enumerate}
\end{conjecture}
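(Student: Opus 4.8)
The overall plan is to turn the deterministic reduction of Sections~\ref{sec:Rigidity-results}--\ref{sec:Spectrum-of-Laplacian} into a probabilistic statement, in the spirit of the Poisson and localization results for the adjacency matrix in \cite{alt2021extremal,alt2021poisson} and for the Laplacian in \cite[Chapter~3]{rivier2023thesis}. Write $\mathfrak{m}:=2-2\cos(\pi/(2t_{*}+1))$ and let $c>0$ be the gap from the remark after Proposition~\ref{prop:LineMinimality}: any positive eigenvalue of $L(T;\widehat{X})$ with $|T|\le t_{*}$ that is not the ground state of a line of length $t_{*}$ anchored by one edge at an extremal vertex is $\ge\mathfrak{m}+c$. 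Fix $\delta\in(0,c/2)$ and take $[a,b]=[\mathfrak{m}-\delta,\mathfrak{m}+\delta]$. By Proposition~\ref{prop:blockdiagonal}, with high probability $\mathrm{Spec}(L(\mathbb{G}))\cap[0,\tfrac{\tau}{2}d]$ coincides with $\{0\}\cup\bigcup_{T\in\mathfrak{F}}\mathrm{Spec}(L(T;\widehat{\partial T}))$ up to shifts $O((1+\mu)/d)$; together with Proposition~\ref{prop:minimality} and the gap $c$, this forces every eigenvalue of $L(\mathbb{G})$ in $[a,b]$ to come from a component $T\in\mathfrak{F}$ that is a line $\mathbb{L}_{t_{*}}$ joined to $\mathbb{G}_{cc}$ by a single edge incident to an extremal vertex $x_{0}$ of $T$. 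Denote by $\mathcal{T}$ the random collection of such lines; item~\ref{enu:largest_tree} of Proposition~\ref{prop:XiProbSet} rules out longer trees and the computation around \eqref{eq:variance_Line} gives $\mathbb{E}|\mathcal{T}|=Nd^{t_{*}}e^{-t_{*}d}(1+o(1))\to\infty$, polynomially in $N$. Item~1 of the conjecture is then exactly Theorem~\ref{thm:1}. Because $|\mathcal{T}|\to\infty$, the point-process and localization statements cannot hold over a literal $N$-independent interval: they should be read after recentering at $\mathfrak{m}$ and dilating, and restricting to the window carrying $O(1)$ of the smallest eigenvalues; this is the version I outline.

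The next step refines the resolvent analysis of Proposition~\ref{prop:Tree_to_Ball} for $T\in\mathcal{T}$. Writing $w$ for the giant-side endpoint of the attaching edge and $F=T$ in \eqref{eq:block_equa_lambda_1_2} (so that $B=-\mathbf{1}_{x_{0}}\mathbf{1}_{w}^{*}$ has rank one), the eigenvalue of interest is the smallest eigenvalue of the negative rank-one perturbation $L(\mathbb{L}_{t_{*}},\{1\})-[(C-\lambda)^{-1}]_{ww}\,\mathbf{1}_{x_{0}}\mathbf{1}_{x_{0}}^{*}$, up to an additional $O(d^{-2})$ from the $X^{\le},X_{\nu}$ blocks in the proof of Proposition~\ref{prop:blockdiagonal} (recall $\|X^{\le}\|^{2}\|(Y-\lambda)^{-1}\|=O(d^{-2})$, since $\mathrm{spec}(Y)\ge\tfrac12\tau d-o(1)$). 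Using \eqref{eq:Bound_C_spectrum} and a one-step resolvent expansion, $[(C-\lambda)^{-1}]_{ww}=D_{w}^{-1}(1+O(1/d))$, whence first-order perturbation theory yields
\[
\lambda(T)=\mathfrak{m}-\frac{|v(x_{0})|^{2}}{D_{w}}+O(d^{-2}),
\]
with $v$ the normalized ground state of $L(\mathbb{L}_{t_{*}},\{1\})$ of Lemma~\ref{lem:LineSpectrum} and $|v(x_{0})|^{2}=\Theta(1)$ since $t_{*}$ is fixed; in particular $\lambda(T)=\mathfrak{m}+O(1/\log N)$, consistently with \eqref{eq:master-equation}, and the eigenvalue \emph{decreases as the attachment degree $D_{w}$ decreases}. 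Thus, exactly as the largest adjacency eigenvalues are tied to the largest degrees in \cite{alt2021extremal}, the bottom of the Laplacian spectrum is governed by the \emph{smallest} degrees $D_{w(T)}$ among $T\in\mathcal{T}$; making the $O(d^{-2})$ remainder meaningful requires the local law for the Laplacian Green function of $\mathbb{G}_{cc}$ from \cite[Chapter~2]{rivier2023thesis}.

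It then suffices to establish Poisson statistics for the point process of the small degrees $\{D_{w(T)}:T\in\mathcal{T}\}$, which transports through $k\mapsto\mathfrak{m}-|v(x_{0})|^{2}/k$ (plus the $O(d^{-2})$ fluctuation) to the eigenvalue process near the bottom of $[a,b]$. The natural tool is the method of factorial moments / Chen--Stein: for a threshold $D\approx\tau^{*}d$ chosen so that $\mathbb{E}\,|\{T\in\mathcal{T}:D_{w(T)}\le D\}|=\Theta(1)$, one shows that the $k$-th factorial moment of this count converges to $\tfrac{1}{k!}$ times the $k$-th power of its mean. The cases $k\le2$ are essentially the moment computations already performed around \eqref{eq:variance_Line}, now with the extra weighting by $\{D_{w(T)}\le D\}$: disjoint lines decorrelate up to the harmless factor $(1-d/N)^{-t_{*}^{2}}=1+o(1)$, lines sharing a vertex form a cluster of size $>t_{*}$ in $\mathcal{V}$ and are excluded by Proposition~\ref{prop:PreviousProba}, and distinct lines in $\mathcal{T}$ have disjoint exploration neighborhoods with high probability by items~\ref{enu:proba-item-tree}--\ref{enu:proba_no_large_cluster} of that Proposition, so the attachment data become conditionally independent given well-separated information. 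The general $k$ is a bounded-dependence argument of the same flavour. Since $D_{w(T)}$ is integer-valued, the limiting process lives near the countable set of values $\{\mathfrak{m}-|v(x_{0})|^{2}/k\}$; resolving the $O(d^{-2})$ spread \emph{within} each such cluster — the finer Poisson structure coming from the local geometry of $\mathbb{G}_{cc}$ near $w$ — is again a matter of the refined local law.

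Finally, localization. For $\lambda\in\mathrm{Spec}(L(\mathbb{G}))\cap[a,b]$ with normalized eigenvector $v$, the block decomposition in the proof of Proposition~\ref{prop:blockdiagonal} gives $\|v|_{\mathcal{U}}\|=1-o(1)$: the $\mathbf{q}$- and $E_{3}$-directions carry energies $O(N^{-c})$ and $\ge\tfrac12\tau d-o(1)$, far from $[a,b]$, while $\|X_{\nu}\|,\|X^{\le}\|=o(1)$. For energies in $[a,b]$, eliminating these directions (and the high block $E_{2}^{>}$) reduces the problem to $L(\mathbb{G})|_{\mathcal{U}}$, which is block-diagonal over $\mathfrak{U}$, perturbed by $O(d^{-2})$; hence, as soon as the block eigenvalue associated with the relevant $U\in\mathfrak{U}$ is separated from the others by $\gg d^{-2}$, $v$ concentrates on that single $U$, and Propositions~\ref{prop:Tree_to_Ball} and~\ref{prop:Exponential_decay} then pin $v|_{U}$ to within $o(1)$ of the ground state of $L(T;\widehat{\partial T})$ for the line $T=\mathcal{V}\cap U$, giving $\|v|_{T}\|=1-o(1)$. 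The main obstacle is precisely to guarantee this separation together with the exact intensity: two lines whose attachment vertices have equal degree produce eigenvalues that differ only at the $O(d^{-2})$ scale, through their distinct local environments in the giant, so controlling that scale — both to identify the limiting intensity and to rule out mixing of eigenvectors among the $O(1)$ extreme lines — needs the one-order-finer Green-function estimate of \cite[Chapter~2]{rivier2023thesis} together with an anti-concentration input for the resulting correction. Once these two ingredients are available, the factorial-moment bookkeeping and the perturbative localization argument are laborious but routine extensions of \eqref{eq:variance_Line} and of the proof of Proposition~\ref{prop:blockdiagonal}.
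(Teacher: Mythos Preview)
This statement is a \emph{conjecture}: the paper does not prove it, and the surrounding text in the Outlook section is explicitly a heuristic roadmap (``we expect'', ``we believe that this could be done'', ``we claim''). Your proposal is likewise an outline rather than a proof, and you are honest about this, flagging the missing Green-function/anti-concentration inputs as the genuine obstacles. At that level, your sketch and the paper's discussion agree on the essential mechanism: the relevant eigenvalues near $\mathfrak{m}$ are in bijection with the anchored lines $T\in\mathcal{T}$, and the leading correction is governed by the degree of the anchor vertex on the giant-component side (your $D_{w}$, the paper's $D_{z}$), so that Poisson statistics for the eigenvalues reduce to Poisson statistics for these degrees, while localization follows from eigenvalue separation plus the perturbative machinery of Proposition~\ref{prop:blockdiagonal}.

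The main methodological difference is in how the next-order approximation is packaged. You compute the rank-one Schur complement $A-[(C-\lambda)^{-1}]_{ww}\,\mathbf{1}_{x_{0}}\mathbf{1}_{x_{0}}^{*}$ and do first-order perturbation theory, obtaining $\lambda(T)=\mathfrak{m}-|v(x_{0})|^{2}/D_{w}+O(d^{-2})$. The paper instead proposes enlarging the reference matrix to the explicit $(t_{*}+2)\times(t_{*}+2)$ tridiagonal matrix $M=M(t_{*},D_{z},|S_{2}(z)|)$ of \eqref{eq:matrixSecondOrder}, which keeps the anchor vertex $z$ \emph{and} an averaged second sphere in the model, and claims $\lambda=\lambda_{1}(M)+O(d^{-2})$. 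These are consistent at order $d^{-1}$, but the paper's version is finer: expanding the $2\times2$ bottom-right block of $M$ shows that the effective anchor correction is $D_{z}^{-1}+|S_{2}(z)|^{-1}+\ldots$, so $|S_{2}(z)|$ enters at order $d^{-2}$, exactly the scale you need to resolve the clusters of lines sharing the same $D_{w}$. In other words, your own identification of ``the main obstacle'' (distinguishing lines with equal attachment degree at scale $d^{-2}$) is precisely what the paper's two-layer matrix $M$ is designed to address; your one-step expansion $[(C-\lambda)^{-1}]_{ww}=D_{w}^{-1}(1+O(1/d))$ already discards that information. Either route would need the refined local-law input you cite, and neither the paper nor your outline supplies it; so the gap is acknowledged on both sides and is not a flaw in your proposal relative to what the paper claims.
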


Following the strategy in \cite{alt2021poisson} one have to improve
the bijection of Proposition \ref{prop:blockdiagonal} such that the
error terms $\epsilon_{\mu}$ are much smaller. We believe that this
could be done analyzing the Laplacian on a neighborhood $U$ of $F\subset U$
instead of $L(F;\widehat{\partial F})$. For the first orders one
should replace the computation of the spectrum of $L(\mathbb{L}_{t},\{1\})$
by the one of the following tridiagonal matrix
\begin{equation}\label{eq:matrixSecondOrder}
M:= M(t_*, D_z, \vert S_2(z)\vert ) =\begin{pmatrix}1 & -1 & 0 & \cdots &  & 0 & 0\\
-1 & 2 & -1 & 0 &  &  & 0\\
0 & -1 & \ddots & \ddots &  &  & \vdots\\
\vdots & 0 & \ddots & \ddots\\
 &  &  &  & 2 & -1 & 0\\
0 &  &  &  & -1 & D_{z} & \sqrt{D_{z}}\\
0 & 0 & \cdots &  & 0 & \sqrt{D_{z}} & \frac{|S_{2}(z)|}{D_{z}}
\end{pmatrix}\in\mathbb{R}^{t_{*}+2\times t_{*}+2}.
\end{equation}
We claim that there exists an eigenvalue of $L(\mathbb{G})$ such
that $\lambda=\lambda_{1}(M)+O(d^{-2})$ and that we can obtain

\begin{equation}
\lambda_{1}(M)=2-2\cos\Bigl(\frac{\pi}{2t_{*}+1}\Bigr)+\delta\bigl(D_{z},|S_{2}(z)|\bigr)+O\bigl(d^{-2}\bigr),\label{eq:Improve_line_spectrum}
\end{equation}
for some explicit function $\delta:\mathbb{R}^{2}\rightarrow\mathbb{R}$.

\begin{figure}[ht!]
    \begin{center}
        \def\svgwidth{0.9\columnwidth} 
            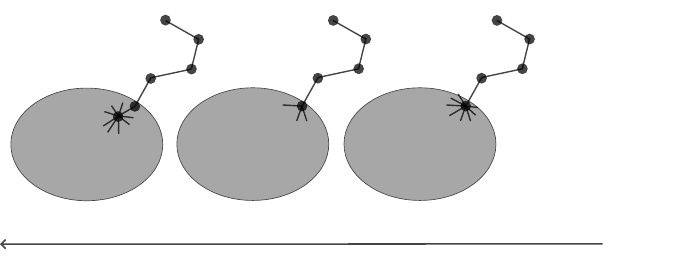
        \caption{Illustration of the intermediate regime. We expect lines of size $t_*+1$  to appear as a result of the \textit{erosion} of the anchor of lines of size $t_*$.}
        \label{fig:erosion}
    \end{center}
\end{figure}

We should also improve Proposition \ref{prop:XiProbSet} having not
only the existence of the line but controlling the degree of the
anchor as well: $\{T\sim\mathbb{L}_{t},z\text{ anchor with }D_{z}=\alpha d\}$
for some $\tau\leq\alpha<1$. Because of \eqref{eq:Improve_line_spectrum},
the smallest eigenvalue would be obtained with the smallest possible
degree of the anchor. We expect it to be close to $cd$ where
\begin{equation}\label{eq:smallest_degree_anchor}
N^{1-t^{*}d+o(1)}\times\mathbb{P}(D_{z}=cd)\asymp1.
\end{equation} 
We then would choose the interval $[a,b]$ as a small neighborhood of
$2-2\cos(\frac{\pi}{2t_{*}+1})+\delta(cd,cd^{2})$.
Finally, the main ingredient for the localization is an estimate on $|\lambda_{i}-\lambda_{j}|\gg\max\{\epsilon_{i},\epsilon_{j}\}$ combined with Lemma \ref{lem:PerturbationLemma}. An upper bound
on $\max\{\epsilon_{i},\epsilon_{j}\}$ should come from the improve
rigidity result above while a lower bound on $|\lambda_{i}-\lambda_{j}|$
should be a consequence of the Poisson point process statistic and
its limiting law.

\subsection{Intermediate regime}

We now add a few remarks about the intermediate regime 
\[
(1-\epsilon)\frac{\log N}{t_{*}}\leq d\leq(1+\epsilon)\frac{\log N}{t_{*}}.
\]
In our paper the condition \eqref{equ:regimes} is needed for two
properties : the existence of a line of size $t_{*}$ and to make
sure that all the vertices in its neighborhood have large degree (for
a spectral gap and to do perturbations theory). In the intermediate
regime, the existence of component of size $t_{*}$ becomes random
but we still have with high probability a line of size $t_{*}-1$
and no connected component of size $t_{*}+1$. Therefore one could
expected that $\lambda_{2}$ is random but because of Propositions
\ref{prop:XiProbSet}, \ref{prop:blockdiagonal}, \ref{prop:minimality}
we can still state the following.
\begin{prop}
In the intermediate regime with high probability
\[
2-2\cos\left(\frac{\pi}{2t_{*}+1}\right)+O\bigl(d^{-1}\bigr)\leq\lambda_{2}\leq2-2\cos\left(\frac{\pi}{2t_{*}-1}\right)+O\bigl(d^{-1}\bigr)
\]
\end{prop}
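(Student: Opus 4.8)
The plan is to re-run the proof of Theorem~\ref{thm:1}, changing only the length of the line that produces the upper bound. I assume throughout that $t_{*}\geq2$ (so that the intermediate regime still lies strictly below the connectivity threshold, $d\leq(1+\epsilon)\tfrac{\log N}{2}<\log N$, and the graph is disconnected) and that $\epsilon$ is small; all implicit constants may depend on $t_{*}$. The lower bound is essentially free: for $\epsilon$ small one has $(1-\epsilon)\tfrac{\log N}{t_{*}}\geq(1+\epsilon')\tfrac{\log N}{t_{*}+1}$ for some $\epsilon'>0$, and items~1 and~2 of Proposition~\ref{prop:XiProbSet}, the events $\Xi_{1},\Xi_{2}$, and hence Proposition~\ref{prop:blockdiagonal}, only use the \emph{lower} bound on $d$ together with $d=O(\log N)$, so (running Section~\ref{sec:Probability-estimate} with the parameter $\epsilon'$) they continue to hold with high probability; in particular $\mathbb{G}|_{{\cal V}}$ is a forest all of whose trees have at most $t_{*}$ vertices. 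Feeding this into Proposition~\ref{prop:blockdiagonal} and using the minimality estimate~\eqref{eq:LineMin} of Proposition~\ref{prop:minimality} exactly as in the proof of Theorem~\ref{thm:1} gives $\lambda_{2}\geq2-2\cos\!\big(\tfrac{\pi}{2t_{*}+1}\big)+O(d^{-1})$ (if $L(\mathbb{G})$ has no eigenvalue in $(0,\tfrac{\tau}{2}d)$ the inequality is trivial).

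For the upper bound I would establish, with high probability, the existence of a connected component of $\mathbb{G}|_{{\cal V}}$ isomorphic to $\mathbb{L}_{t_{*}-1}$ attached to the rest of the graph by a single edge at one of its endpoints. This is the analogue of Proposition~\ref{prop:XiProbSet}, item~\ref{enu:LineExistence}, with $t_{*}$ replaced by $t_{*}-1$, and I would prove it by the same second-moment method: let ${\cal N}$ count the sets $Y=\{y_{1},\dots,y_{t_{*}-1},x\}$ for which $\mathbb{G}|_{\{y_{1},\dots,y_{t_{*}-1}\}}$ is a line, $(y_{t_{*}-1},x)$ is its only edge to the complement, and $D_{x}>\tau d$. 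Since conditionally on the first two events $D_{x}=1+\mathrm{Bin}(N-t_{*},d/N)$, one obtains $\mathbb{E}({\cal N})=(1-o(1))\,Nd^{t_{*}-1}e^{-(t_{*}-1)d}(1+o(1))$, which by $d\leq(1+\epsilon)\tfrac{\log N}{t_{*}}$ is at least $N^{(1-\epsilon(t_{*}-1))/t_{*}+o(1)}\to\infty$ for $\epsilon<\tfrac{1}{t_{*}-1}$. For the variance I would split $\sum_{Y_{1},Y_{2}}$ into the cases $Y_{1}=Y_{2}$, $Y_{1}\cap Y_{2}=\emptyset$, $Y_{1}\cap Y_{2}\neq\emptyset$: the first two are handled exactly as in~\eqref{eq:variance_Line}, while in the overlap case $Y_{1}\cup Y_{2}$ is a tree on $s\leq2t_{*}-1$ vertices carrying $s-1$ prescribed edges, at least $\ell\geq t_{*}$ of whose vertices are forced into ${\cal V}$, so summing over the $O(1)$ possible shapes bounds this contribution by $CNd^{s-1}e^{-\ell d}\leq C(\log N)^{2t_{*}}N^{\epsilon}=o\!\big(\mathbb{E}({\cal N})^{2}\big)$ for $\epsilon$ small. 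Hence ${\cal N}\geq1$ with high probability; by items~1--2 of Proposition~\ref{prop:XiProbSet} and $x\notin{\cal V}$, the set $\{y_{1},\dots,y_{t_{*}-1}\}$ is then a connected component of $\mathbb{G}|_{{\cal V}}$ isomorphic to $\mathbb{L}_{t_{*}-1}$ whose anchor is a single extremal vertex of multiplicity one, so $L(\mathbb{G})$ restricted to it equals $L(\mathbb{L}_{t_{*}-1},\{1\})$ up to relabelling, whose smallest eigenvalue is $2-2\cos\!\big(\tfrac{\pi}{2t_{*}-1}\big)$ by Lemma~\ref{lem:LineSpectrum}. Proposition~\ref{prop:blockdiagonal} then makes this, up to $O(d^{-1})$, a positive eigenvalue of $L(\mathbb{G})$, whence $\lambda_{2}\leq2-2\cos\!\big(\tfrac{\pi}{2t_{*}-1}\big)+O(d^{-1})$.

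The only step that is not a direct transcription of earlier arguments, and the one I expect to need the most care, is the overlap term of this variance. In the regime~\eqref{equ:regimes} it is controlled by $o(1)$ times the number of clusters of ${\cal V}$ of size $\geq t_{*}+1$, but in the intermediate regime clusters of size exactly $t_{*}$ are no longer rare, so one must enumerate the finitely many shapes of $Y_{1}\cup Y_{2}$ directly. What saves the bound is that under ${\cal L}_{Y_{1}}\cap{\cal L}_{Y_{2}}$ each vertex of $L_{1}\cup L_{2}$ has its full neighbourhood prescribed, contributing an $e^{-d}\leq N^{-(1-\epsilon)/t_{*}}$ factor, and $|L_{1}\cup L_{2}|\geq t_{*}$ makes the accumulated $e^{-\ell d}$ dominate the single free vertex, leaving a surplus $N^{O(\epsilon)}$ that is negligible against $\mathbb{E}({\cal N})^{2}=N^{2(1-\epsilon(t_{*}-1))/t_{*}+o(1)}$ once $\epsilon$ is small enough.
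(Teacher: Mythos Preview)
Your proposal is correct and follows exactly the route the paper indicates: the paper does not give a detailed proof of this proposition but simply notes that in the intermediate regime one still has, with high probability, a line of size $t_{*}-1$ and no connected component of size $t_{*}+1$ in $\mathbb{G}|_{\mathcal{V}}$, and then invokes Propositions~\ref{prop:XiProbSet}, \ref{prop:blockdiagonal}, \ref{prop:minimality}. Your write-up is a faithful (and more careful) expansion of that sketch, including the correct observation that the overlap term in the variance can no longer be disposed of via the $o(1)$ bound on size-$(t_{*}+1)$ clusters and must instead be enumerated directly; your assumption $t_{*}\geq 2$ is also the natural implicit hypothesis here.
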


To refine the analysis, we expect the degree of the anchor $D_{z}$
to play a major role in the transition from the $t_{*}$ line regime
to the $t_{*}+1$ line regime, going from $D_{z}\geq\tau d$ for $d\approx(1+\epsilon)\frac{\log N}{t_{*}+1}$
and decreasing to $D_{z}\sim1$ for $d\approx\frac{\log N}{t_{*}+1}$
therefore creating a $t_{*}+1$ line. One could then try to use \eqref{eq:Improve_line_spectrum}
to compute $\lambda_{2}$ in the intermediate regime. However, because
the predictions obtained from the spectrum of finite trees only give
discrete values, there should exist regimes of $d$ for which $\lambda_{2}$
is random with fluctuations $\asymp1$. An illustration of our heuristics is given in Figure \ref{fig:erosion}.

\subsection{Numerical simulations}

\begin{figure}[ht!]
    \begin{center}
        \def\svgwidth{0.9\columnwidth} 
\begingroup%
  \makeatletter%
  \providecommand\color[2][]{%
    \errmessage{(Inkscape) Color is used for the text in Inkscape, but the package 'color.sty' is not loaded}%
    \renewcommand\color[2][]{}%
  }%
  \providecommand\transparent[1]{%
    \errmessage{(Inkscape) Transparency is used (non-zero) for the text in Inkscape, but the package 'transparent.sty' is not loaded}%
    \renewcommand\transparent[1]{}%
  }%
  \providecommand\rotatebox[2]{#2}%
  \newcommand*\fsize{\dimexpr\f@size pt\relax}%
  \newcommand*\lineheight[1]{\fontsize{\fsize}{#1\fsize}\selectfont}%
  \ifx\svgwidth\undefined%
    \setlength{\unitlength}{475.08654497bp}%
    \ifx\svgscale\undefined%
      \relax%
    \else%
      \setlength{\unitlength}{\unitlength * \real{\svgscale}}%
    \fi%
  \else%
    \setlength{\unitlength}{\svgwidth}%
  \fi%
  \global\let\svgwidth\undefined%
  \global\let\svgscale\undefined%
  \makeatother%
  \begin{picture}(1,0.86932129)%
    \lineheight{1}%
    \setlength\tabcolsep{0pt}%
    \put(0,0){\includegraphics[width=\unitlength,page=1]{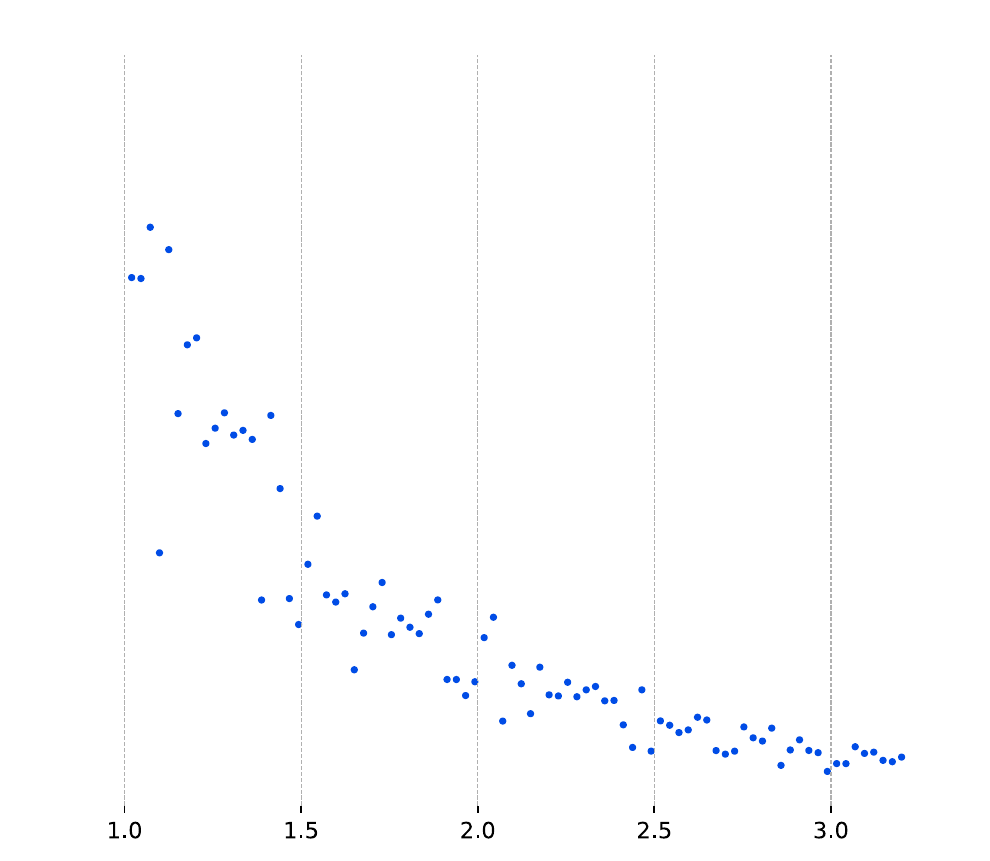}}%
    \put(0.52878316,0.00309363){\color[rgb]{0,0,0}\makebox(0,0)[lt]{\lineheight{1.14999998}\smash{\begin{tabular}[t]{l}$t$\end{tabular}}}}%
    \put(0,0){\includegraphics[width=\unitlength,page=2]{simulation_subcritical.pdf}}%
    \put(0.15973298,0.84351292){\color[rgb]{0,0,0}\makebox(0,0)[lt]{\lineheight{1.14999998}\smash{\begin{tabular}[t]{l}Sub-critical regime $d\leq\log N$, $t\geq 1$\end{tabular}}}}%
    \put(0.02922075,0.38247609){\color[rgb]{0,0,0}\rotatebox{90}{\makebox(0,0)[lt]{\lineheight{1.14999998}\smash{\begin{tabular}[t]{l}$\lambda_2(L)$\end{tabular}}}}}%
  \end{picture}%
\endgroup%

        \caption{We plotted the spectral gap of $L$ for various regimes of $d$ below the criticality threshold. Simulations were obtained with $N=10^4$, $d= \frac{1}{t} \log N$ for values of $1\leq t\leq 3.2.$ }
        \label{fig:simulations_sub}
    \end{center}
\end{figure}

\begin{figure}[ht!]
    \begin{center}
        \def\svgwidth{0.9\columnwidth} 
\begingroup%
  \makeatletter%
  \providecommand\color[2][]{%
    \errmessage{(Inkscape) Color is used for the text in Inkscape, but the package 'color.sty' is not loaded}%
    \renewcommand\color[2][]{}%
  }%
  \providecommand\transparent[1]{%
    \errmessage{(Inkscape) Transparency is used (non-zero) for the text in Inkscape, but the package 'transparent.sty' is not loaded}%
    \renewcommand\transparent[1]{}%
  }%
  \providecommand\rotatebox[2]{#2}%
  \newcommand*\fsize{\dimexpr\f@size pt\relax}%
  \newcommand*\lineheight[1]{\fontsize{\fsize}{#1\fsize}\selectfont}%
  \ifx\svgwidth\undefined%
    \setlength{\unitlength}{313.99222066bp}%
    \ifx\svgscale\undefined%
      \relax%
    \else%
      \setlength{\unitlength}{\unitlength * \real{\svgscale}}%
    \fi%
  \else%
    \setlength{\unitlength}{\svgwidth}%
  \fi%
  \global\let\svgwidth\undefined%
  \global\let\svgscale\undefined%
  \makeatother%
  \begin{picture}(1,0.80898584)%
    \lineheight{1}%
    \setlength\tabcolsep{0pt}%
    \put(0,0){\includegraphics[width=\unitlength,page=1]{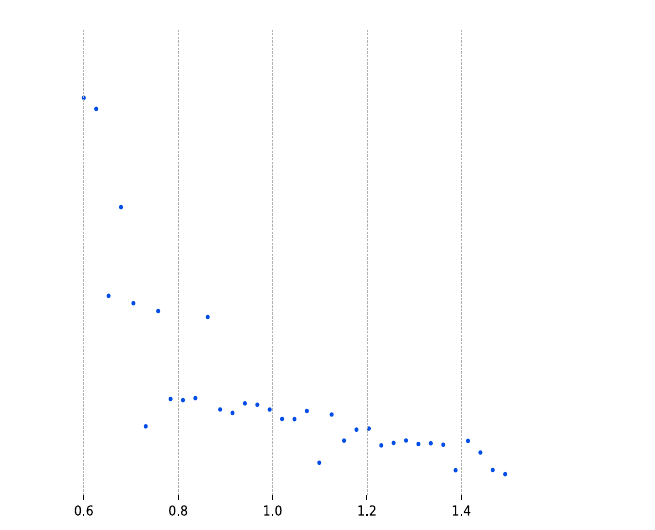}}%
    \put(0.44400771,0.00468077){\color[rgb]{0,0,0}\makebox(0,0)[lt]{\lineheight{1.14999998}\smash{\begin{tabular}[t]{l}$t$\end{tabular}}}}%
    \put(0,0){\includegraphics[width=\unitlength,page=2]{simulations_image.pdf}}%
    \put(0.0241969,0.39163584){\color[rgb]{0,0,0}\rotatebox{90}{\makebox(0,0)[lt]{\lineheight{1.14999998}\smash{\begin{tabular}[t]{l}$\lambda_2(L)$\end{tabular}}}}}%
    \put(0,0){\includegraphics[width=\unitlength,page=3]{simulations_image.pdf}}%
    \put(0.13743085,0.78478886){\color[rgb]{0,0,0}\makebox(0,0)[lt]{\lineheight{1.14999998}\smash{\begin{tabular}[t]{l}Critical regime $d\geq \frac{1}{2}\log N$, $t\leq 1.5$\end{tabular}}}}%
  \end{picture}%
\endgroup%

        \caption{We plotted the spectral gap of $L$ for $d\geq \frac{1}{2}\log N$. Simulations were obtained with $N=10^4$, $d= \frac{1}{t} \log N$ for values of $0.6\leq t\leq 1.$  }
        \label{fig:simulations_sur}
    \end{center}
\end{figure}

In this section, we compare our estimates from Theorem \ref{thm:1} with numerical simulations presented in Figure \ref{fig:simulations_sub} and \ref{fig:simulations_sur}. Using Theorem \ref{thm:1}, we obtain a first-order approximation (dark red): note that this estimate only depends on $\lfloor t \rfloor$ (see \eqref{eq:master-equation}) and is thus a piece-wise constant function.
The error between the dark red line and the blue dots remains of the order $O\bigl(d^{-1}\bigr)$, in agreement with \eqref{eq:master-equation} (indeed for $N=10^4$, $1/d \sim 0.1$).
We expect the fit to become much better as $d\rightarrow +\infty$, but unfortunately, since $d \asymp \log N$, we would need to simulate exponentially large graphs to reduce the error, which is infeasible in practice.
Second, we can use the discussion at the beginning of the second to approximate $\lambda_2(L)$ by $\lambda_1(M)$ for $M$ defined in \eqref{eq:matrixSecondOrder}. 
The matrix $M$ has three parameters namely $t_*$ which corresponds to $\lfloor t \rfloor$, $D_z$ and $\vert S_2(z)\vert.$ We set $D_z=cd$, $c>0$,  to be the (expected) smallest degree of any anchor of a line of size $t_*$ and $\vert S_2(z)\vert=dD_z$ (see the discussion leading to \eqref{eq:smallest_degree_anchor}).
The second-order approximation seems to fit the numerical results much more closely, than the first-order approximation.  The simulations are displayed in Figure \ref{fig:simulations_sub}.

To give a more complete view of the behavior of the spectral gap, we also provide simulations for $d\geq \log N$ on Figure \ref{fig:simulations_sur} It was shown in \cite{rivier2023thesis}[Chapter 3] that the spectral gap of $L$ in these regimes was given to the first order by $\Delta + \frac{d}{\Delta - d}$, $\Delta := \min_{x\in[N]} D_x.$ Therefore, whenever $d \geq \log N$, we use $\Delta$ as a first order approximation for $\lambda_2(L)$ (dark green line) and $\Delta + \frac{d}{\Delta - d}$ as a second order approximation (light green line).  In the subcritical regime, we can approximate $\lambda_2(L)$ in two different ways.

\section{Appendix}

We mention \cite[Lemma E.1 and E.5]{alt2021poisson}.
\begin{lem}
\label{lem:adj_of_tree}For $T$ a tree with degrees bounded by $M>0$
and $A$ its adjacency matrix we have $\Vert A\Vert\leq\sqrt{2M}.$
\end{lem}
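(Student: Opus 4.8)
The plan is to estimate the quadratic form $\langle u,Au\rangle$ for unit vectors $u$ by rooting the tree and charging each edge to its endpoint closer to the root. Fix any root $o\in T$; every vertex $v\neq o$ has a unique parent $p(v)$, so the edge set of $T$ is $\{\{v,p(v)\}\colon v\neq o\}$, and each vertex $w$ has at most $d_w-1\le M-1$ children when $w\neq o$ and at most $d_o\le M$ children when $w=o$. Since $A$ is symmetric, $\|A\|=\sup_{\|u\|=1}|\langle u,Au\rangle|$, and we may write
\[
\langle u,Au\rangle=2\sum_{\{x,y\}\in E}u_xu_y=2\sum_{v\neq o}u_v\,u_{p(v)}.
\]

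The key step is a weighted Young inequality: for a parameter $\alpha>0$ to be fixed later, $2|u_v\,u_{p(v)}|\le \alpha^{-1}u_v^{2}+\alpha\,u_{p(v)}^{2}$. Summing over $v\neq o$ and regrouping the second sum according to parents gives
\[
|\langle u,Au\rangle|\le\sum_{v\neq o}\alpha^{-1}u_v^{2}+\sum_{w\in T}\alpha\,c(w)\,u_w^{2}\le\sum_{w\in T}\bigl(\alpha^{-1}+\alpha\,c(w)\bigr)u_w^{2},
\]
where $c(w)$ denotes the number of children of $w$. Choosing $\alpha=M^{-1/2}$ and using $c(w)\le M$ gives $\alpha^{-1}+\alpha\,c(w)\le\sqrt M+c(w)/\sqrt M\le 2\sqrt M$ for every $w$, hence $|\langle u,Au\rangle|\le 2\sqrt M\,\|u\|^{2}$ and therefore $\|A\|\le 2\sqrt M$. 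Optimising the weight (taking $\alpha=(M-1)^{-1/2}$ and treating the root separately) in fact sharpens this to the classical bound $\|A\|\le 2\sqrt{M-1}$; an alternative route to the same constant is the moment estimate $\|A\|^{2k}\le\operatorname{tr}(A^{2k})$, where $\operatorname{tr}(A^{2k})$ counts closed walks of length $2k$ in $T$ and is dominated by the corresponding count in the $M$-regular tree.

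I do not expect a genuine obstacle in the general tree case: the argument is essentially one line once the quadratic form is set up, the only care needed being the root/non-root bookkeeping for the number of children. The one delicate point is the numerical constant — both natural approaches above (the quadratic form and the walk count) produce the constant $2$, and this is sharp, since the spectral radius of a deep balanced $M$-ary tree tends to $2\sqrt{M-1}$; accordingly the estimate is used in the body of the paper in the form $\|A\|\le 2\sqrt M$ (for instance in $\|A(\mathbb G|_{U\setminus F})\|\le 2\sqrt{\mathcal C d}$ and in $\max\{\|X^{\le}\|,\|X^{>}\|\}\le 2\sqrt{\mathcal C d}$). When the tree is merely a disjoint union of stars one can instead diagonalise each star explicitly, obtaining the sharper identity $\|A\|=\sqrt{\Delta}$ with $\Delta$ the largest degree, which is the form in which the bound is applied to $\|B\|$.
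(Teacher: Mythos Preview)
The paper does not actually prove this lemma; it merely cites it from another work (Lemmas~E.1 and~E.5 of the Alt--Ducatez--Knowles reference). Your argument via rooting the tree and applying a weighted AM--GM inequality to the quadratic form is the standard elementary proof and is correct, yielding $\|A\|\le 2\sqrt{M}$ (and, with the refinement you indicate, the sharp $2\sqrt{M-1}$, which is in fact what the cited source states).

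You are also right to flag the constant: the bound $\sqrt{2M}$ printed in the lemma statement is a typo and is actually false for $M\ge 3$, since sufficiently deep trees with maximal degree $M$ have spectral radius approaching $2\sqrt{M-1}>\sqrt{2M}$. As you observed, every application of the lemma in the body of the paper (the bounds on $\|A(\mathbb{G}|_{U\setminus F})\|$, on $\|X^{\le}\|,\|X^{>}\|$, and the spectral gap for $C$) uses the form $2\sqrt{\mathcal{C} d}$, consistent with your bound rather than with the stated constant.
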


\begin{lem}
\label{lem:PerturbationLemma}Let $M$ be a self-adjoint matrix. Let
$\epsilon,\Delta>0$ satisfy $5\epsilon\leq\Delta$. Let $\lambda\in\mathbb{R}$
and suppose that $M$ has a unique eigenvalue $\mu$ in $[\lambda-\Delta,\lambda+\Delta]$,
with corresponding normalized eigenvector $w$. If there exists a
normalized vector $v$ such that $\|(M-\lambda)v\|\leq\epsilon$ then

\[
\mu-\lambda=\langle v,(M-\lambda)v\rangle+O\left(\frac{\epsilon^{2}}{\Delta}\right),\qquad\|w-v\|=O\left(\frac{\epsilon^{2}}{\Delta}\right).
\]

\end{lem}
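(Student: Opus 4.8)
The plan is to prove Lemma~\ref{lem:PerturbationLemma} by a direct expansion of $v$ in an eigenbasis of $M$. Fix an orthonormal eigenbasis $(w_i)_i$ of the self-adjoint matrix $M$, with $Mw_i=\mu_iw_i$, labelled so that $\mu_1=\mu$ and $w_1=w$; since $\mu$ is the unique eigenvalue of $M$ in $[\lambda-\Delta,\lambda+\Delta]$ we have $|\mu_i-\lambda|>\Delta$ for every $i\neq1$. Write $v=\sum_ic_iw_i$ with $\sum_i|c_i|^2=1$, so that the hypothesis $\|(M-\lambda)v\|\leq\epsilon$ reads $\sum_i|c_i|^2(\mu_i-\lambda)^2\leq\epsilon^2$. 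Discarding the $i=1$ term and using $|\mu_i-\lambda|>\Delta$ for $i\neq1$, I would first obtain
\[
\sum_{i\neq1}|c_i|^2\leq\frac{\epsilon^2}{\Delta^2},\qquad\text{hence}\qquad|c_1|^2=1-\sum_{i\neq1}|c_i|^2\geq1-\frac{\epsilon^2}{\Delta^2}\geq\frac{24}{25},
\]
the last inequality being where $5\epsilon\leq\Delta$ enters; after multiplying $w$ by a unimodular scalar I may assume $c_1\geq0$.

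For the eigenvalue estimate I would decompose
\[
\langle v,(M-\lambda)v\rangle=\sum_i|c_i|^2(\mu_i-\lambda)=(\mu-\lambda)-\Big(\sum_{i\neq1}|c_i|^2\Big)(\mu-\lambda)+\sum_{i\neq1}|c_i|^2(\mu_i-\lambda).
\]
The middle term is $O(\epsilon^2/\Delta^2)\cdot|\mu-\lambda|=O(\epsilon^2/\Delta)$ since $|\mu-\lambda|\leq\Delta$. The last term is the one place requiring care, because a priori $|\mu_i-\lambda|$ is only bounded by $\|M\|+|\lambda|$; rather than pull it out I would apply Cauchy--Schwarz against the quadratic weight furnished by the hypothesis,
\[
\sum_{i\neq1}|c_i|^2|\mu_i-\lambda|\leq\Big(\sum_{i\neq1}|c_i|^2\Big)^{1/2}\Big(\sum_{i\neq1}|c_i|^2(\mu_i-\lambda)^2\Big)^{1/2}\leq\frac{\epsilon}{\Delta}\cdot\epsilon=\frac{\epsilon^2}{\Delta},
\]
and conclude $\mu-\lambda=\langle v,(M-\lambda)v\rangle+O(\epsilon^2/\Delta)$.

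For the eigenvector estimate I would compute $\|w-v\|^2=\|w\|^2-2\langle w,v\rangle+\|v\|^2=2-2c_1$ and then, using $0\leq c_1\leq1$, bound $1-c_1\leq1-c_1^2=\sum_{i\neq1}|c_i|^2\leq\epsilon^2/\Delta^2$, so that $\|w-v\|\leq\sqrt2\,\epsilon/\Delta=O(\epsilon/\Delta)$, which is the form of the bound invoked in the applications above. Here $5\epsilon\leq\Delta$ serves only to keep $c_1$ bounded away from $0$, i.e.\ to ensure that $v$ genuinely singles out $w$ and not a mixture of eigenvectors.

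I do not anticipate a real obstacle: this is a soft perturbation lemma and the computation above is essentially complete. The only step that has to be done carefully rather than naively is the tail $\sum_{i\neq1}|c_i|^2(\mu_i-\lambda)$ in the eigenvalue identity --- using the total off-diagonal mass $\sum_{i\neq1}|c_i|^2$ with a uniform bound on $|\mu_i-\lambda|$ would be far too lossy, and one must keep the quadratic weight $(\mu_i-\lambda)^2$ coming from $\|(M-\lambda)v\|\leq\epsilon$ and split it by Cauchy--Schwarz. Everything else is bookkeeping with the normalization $\sum_i|c_i|^2=1$ and the gap condition $|\mu_i-\lambda|>\Delta$ for $i\neq1$.
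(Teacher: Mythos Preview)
The paper does not supply its own proof of this lemma; it simply records it in the appendix as a quotation of \cite[Lemma~E.1 and~E.5]{alt2021poisson}. Your spectral-expansion argument is the standard one and is correct as written: the Cauchy--Schwarz splitting of $\sum_{i\neq1}|c_i|^2(\mu_i-\lambda)$ is exactly the right way to avoid losing a factor of $\|M\|$, and the remaining steps are routine.

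One point worth flagging is the eigenvector bound. You obtain $\|w-v\|\leq\sqrt2\,\epsilon/\Delta$, i.e.\ $O(\epsilon/\Delta)$, whereas the lemma as stated in the paper claims $O(\epsilon^2/\Delta)$. Your bound is the correct and sharp one: taking $M$ with eigenvalues $0$ and $2\Delta$, $\lambda=0$, and $v=\sqrt{1-t^2}\,w_1+tw_2$ with $t=\epsilon/(2\Delta)$ gives $\|(M-\lambda)v\|=\epsilon$ and $\|w-v\|\asymp\epsilon/\Delta$, so $O(\epsilon^2/\Delta)$ cannot hold in general. The exponent in the paper's statement is presumably a misprint carried over from the eigenvalue estimate; in the body of the paper only the eigenvalue conclusion is actually used (in Propositions~\ref{prop:Spec_Vc} and~\ref{prop:blockdiagonal}), so the slip is harmless. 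Your remark that $O(\epsilon/\Delta)$ is ``the form of the bound invoked in the applications'' is therefore on point.
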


\bibliographystyle{alpha}
\bibliography{bibli_tree}

\end{document}